\documentclass[a4paper,10pt]{amsart}
\usepackage{enumerate, amsmath, amsfonts, amssymb, amsthm, mathtools, thmtools, wasysym, graphics, graphicx, frcursive, xparse, comment, multicol, ytableau,enumitem, xcolor, shuffle, hyperref}
\usepackage[all,cmtip]{xy}
\definecolor{darkblue}{rgb}{0.0,0,0.7}

\hypersetup{colorlinks=true, citecolor=darkblue, linkcolor=darkblue}
\usepackage{tikz}\usetikzlibrary{calc,through,backgrounds,shapes,matrix}
\usetikzlibrary{decorations.pathreplacing}

\newtheorem{theorem}{Theorem}[section]
\newtheorem{lemma}[theorem]{Lemma}
\newtheorem{proposition}[theorem]{Proposition}
\newtheorem{corollary}[theorem]{Corollary}

\newtheorem{definition}[theorem]{Definition}
\theoremstyle{remark}
\newtheorem{remark}[theorem]{Remark}
\newtheorem{problem}[theorem]{Problem}
\newtheorem{example}[theorem]{Example}
\numberwithin{equation}{section}

\newcommand{\wo}{w_{\circ}}

\newcommand{\Po}{\mathcal{P}}
\newcommand{\blank}{\phantom{2}}
\newcommand{\gap}{\hspace{1in} \\ \vspace{-.2in}}


\newcounter{x}
\newcounter{y}
\newcounter{z}

\newcommand\xaxis{210}
\newcommand\yaxis{-30}
\newcommand\zaxis{90}

\newcommand\topside[3]{
  \fill[fill=white, draw=black,shift={(\xaxis:#1)},shift={(\yaxis:#2)},
  shift={(\zaxis:#3)}] (0,0) -- (30:1) -- (0,1) --(150:1)--(0,0);
}

\newcommand\leftside[3]{
  \fill[fill=gray, draw=black,shift={(\xaxis:#1)},shift={(\yaxis:#2)},
  shift={(\zaxis:#3)}] (0,0) -- (0,-1) -- (210:1) --(150:1)--(0,0);
}

\newcommand\rightside[3]{
  \fill[fill=black, draw=black,shift={(\xaxis:#1)},shift={(\yaxis:#2)},
  shift={(\zaxis:#3)}] (0,0) -- (30:1) -- (-30:1) --(0,-1)--(0,0);
}

\newcommand\cube[3]{
  \topside{#1}{#2}{#3} \leftside{#1}{#2}{#3} \rightside{#1}{#2}{#3}
}


\newcommand\planepartition[1]{
 \setcounter{x}{-1}
  \foreach \a in {#1} {
    \addtocounter{x}{1}
    \setcounter{y}{-1}
    \foreach \b in \a {
      \addtocounter{y}{1}
      \setcounter{z}{-1}
      \foreach \c in {0,...,\b} {
        \addtocounter{z}{1}
      \ifthenelse{\c=0}{\setcounter{z}{-1},\addtocounter{y}{0}}{
        \cube{\value{x}}{\value{y}}{\value{z}}}
      }
    }
  }
}

\usepackage{todonotes}
\newcommand{\nathan}[1]{\todo[size=\tiny,color=green!30]{#1 \\ \hfill --- N.}}
\newcommand{\Nathan}[1]{\todo[size=\tiny,inline,color=green!30]{#1
      \\ \hfill --- N.}}

\newcommand{\Oliver}[1]{\todo[size=\tiny,inline,color=red!30]{#1
      \\ \hfill --- O.}}
\newcommand{\defn}[1]{\emph{\color{blue} #1}} 

\usepackage[nameinlink]{cleveref}
\usepackage[all]{xy}
\usepackage[T1]{fontenc}
\crefname{conjecture}{Conjecture}{Conjectures}
\crefname{problem}{Problem}{Problems}

\DeclareMathOperator{\Red}{{\sf Red}}

\newcommand{\well}{{j}}
\newcommand{\IT}{{\sf IT}}
\DeclareMathOperator{\SYT}{{\sf ST}}

\newcommand{\K}{{$K$}}
\newcommand{\X}{{X}}
\newcommand{\Y}{{Y}}
\newcommand{\Z}{{Z}}
\setlength{\unitlength}{0.06em}
\newlength{\cellsize} \setlength{\cellsize}{18\unitlength}
\newsavebox{\cell}
\sbox{\cell}{\begin{picture}(18,18)
\put(0,0){\line(1,0){18}}
\put(0,0){\line(0,1){18}}
\put(18,0){\line(0,1){18}}
\put(0,18){\line(1,0){18}}
\end{picture}}
\newcommand\cellify[1]{\def\thearg{#1}\def\nothing{}
\ifx\thearg\nothing
\vrule width0pt height\cellsize depth0pt\else
\hbox to 0pt{\usebox{\cell} \hss}\fi%
\vbox to \cellsize{
\vss
\hbox to \cellsize{\hss$#1$\hss}
\vss}}
\newcommand\tableau[1]{\vtop{\let\\\cr
\baselineskip -16000pt \lineskiplimit 16000pt \lineskip 0pt
\ialign{&\cellify{##}\cr#1\crcr}}}
\newcommand{\trap}[2]{\Phi^+_{B_{{#1},{#2}}}}

\newcommand{\pic}{\begin{tikzpicture}}
\newcommand{\epic}{\end{tikzpicture}}

\newcommand{\PP}{{\sf PP}}

\newcommand{\len}{{\sf len}}
\newcommand{\R}{\mathcal{C}}

\newcommand{\rect}{\mathsf{rect}}

\newcommand{\Kinf}{{\sf infusion}}
\newcommand{\fold}{{\sf fold}}

\newcommand{\woJ}{w_\circ^J}

\newcommand{\Gr}{{\sf Gr}}
\newcommand{\OG}{{\sf OG}}
\newcommand{\LG}{{\sf LG}}

\newcommand{\CC}{\mathbb{C}}
\newcommand{\QQ}{\mathbb{Q}}
\newcommand{\ZZ}{\mathbb{Z}}

\newcommand{\hgt}{{\sf ht}}

\newcommand{\GP}{{\sf G}/{\sf P}}

\newcommand{\T}{\mathsf{T}}
\newcommand{\U}{\mathsf{U}}
\newcommand{\x}{{\sf x}}
\newcommand{\w}{\mathsf{w}}
\newcommand{\vv}{{\sf v}}
\newcommand{\uu}{{\sf u}}
\newcommand{\p}{\ell}
\newcommand{\kk}{{m}}
\newcommand{\J}{{\mathcal{J}}}
\newcommand{\swap}{{\sf swap}}
\newcommand{\jdt}{{\sf jdt}}

\newcommand{\rhow}{{\sf read}}

\begin{document}

\author[Z.~Hamaker]{Zachary Hamaker}
\address[Z.~Hamaker]{Department of Mathematics, University of Michigan, 530 Church St. \\ Ann Arbor, MI U.S.A. 48103}
\email{zachary.hamaker@gmail.com}
\thanks{Z.H. was supported by the Institute for Mathematics and its Applications with funds provided by the National Science Foundation.}

\author[R.~Patrias]{Rebecca Patrias}
\address[R. Patrias]{LaCIM, Universit\'e du Qu\'ebec \`a Montr\'eal \\
Montr\'eal (Qu\'ebec), Canada}
\email{patriasr@lacim.ca}
\thanks{R.P. was partially supported by NSF Grant DMS-1148634.}
\author[O.Pechenik]{Oliver Pechenik}
\address[O.~Pechenik]{Department of Mathematics, Rutgers University, Piscataway, NJ 08854}
\email{oliver.pechenik@rutgers.edu}
\thanks{O.P. was supported by an Illinois Distinguished Fellowship and an NSF Graduate Research Fellowship.}
\author[N.~Williams]{Nathan Williams}
\address[N.~Williams]{Department of Mathematical Sciences, University of Texas at Dallas}
\email{nathan.f.williams@gmail.com}

\title{Doppelg\"angers: Bijections of Plane Partitions}

\begin{abstract}

We say two posets are {\it doppelg\"angers} if they have the same number of $P$-partitions of each height $k$.  We give a uniform framework for bijective proofs that posets are doppelg\"angers by synthesizing \K-theoretic Schubert calculus techniques of H.~Thomas and A.~Yong with M.~Haiman's rectification bijection and an observation of R.~Proctor. Geometrically, these bijections reflect the rational equivalence of certain subvarieties of minuscule flag manifolds.
As a special case, we provide the first bijective proof of a 1983 theorem of R.~Proctor---that plane partitions of height $k$ in a rectangle are equinumerous with plane partitions of height $k$ in a shifted trapezoid.


\end{abstract}

\maketitle

\section{Introduction}


\subsection{Doppelg\"angers}
\label{sec:coi_and_min}

Fix $\Po$ a finite partially-ordered set (\defn{poset}) with order relation $\succeq$.  For $\p \in \mathbb{Z}_{\geq 0}$, a \defn{$\Po$-partition of height $\p$} is a weakly order-preserving map $\T: \Po \to \{0,1,\ldots,\p\}$.  That is, if $p \succ q \in \Po$, then $\T(p) \geq \T(q)$.  We write $\PP^{[\p]}(\Po)$ for the set of all $\Po$-partitions of height $\p$ and---for ease of notation---we refer to $\Po$-partitions as \defn{plane partitions}.  

For $p\succ q \in \Po$, we say $p$ \defn{covers} $q$---written $p \gtrdot q$---if there is no $r \in \Po$ such that $p\succ r \succ q$.  We draw the \defn{Hasse diagram} of $\Po$ as a graph directed upwards in the page, with vertices indexed by the elements of $\Po$ and an edge from the vertex for $q$ to the vertex for $p$ when $p \gtrdot q$.



\begin{example}\rm
The posets with Hasse diagrams $\raisebox{-0.4\height}{\begin{tikzpicture}[scale=.2]
    \draw[thick] (0 cm,0) -- (1 cm,1);
        \draw[thick] (0 cm,0) -- (-1 cm,1);
        \draw[thick] (1 cm,1) -- (0 cm,2);
        \draw[thick] (-1 cm,1) -- (0 cm,2);
    \draw[thick,solid,fill=white] (0cm,0) circle (.5cm);
    \draw[thick,solid,fill=white] (1cm,1) circle (.5cm);
        \draw[thick,solid,fill=white] (-1cm,1) circle (.5cm);
        \draw[thick,solid,fill=white] (0cm,2) circle (.5cm);
  \end{tikzpicture}}$ and $\raisebox{-0.4\height}{\begin{tikzpicture}[scale=.2]
        \draw[thick] (-1 cm,3) -- (0 cm,2);
        \draw[thick] (-1 cm,1) -- (0 cm,2);
                \draw[thick] (0 cm,2) -- (1 cm,3);
    \draw[thick,solid,fill=white] (1cm,3) circle (.5cm);
    \draw[thick,solid,fill=white] (-1cm,3) circle (.5cm);
        \draw[thick,solid,fill=white] (-1cm,1) circle (.5cm);
        \draw[thick,solid,fill=white] (0cm,2) circle (.5cm);
  \end{tikzpicture}}$ each have six plane partitions of height $1$, as illustrated below.  Vertices in the Hasse diagrams are colored white if their image is $0$ and gray if their image is $1$.

\vspace{0.5em}
\begin{center}\tiny
\begin{tikzpicture}[scale=.3]
    \draw[thick] (0 cm,0) -- (1 cm,1);
        \draw[thick] (0 cm,0) -- (-1 cm,1);
        \draw[thick] (1 cm,1) -- (0 cm,2);
        \draw[thick] (-1 cm,1) -- (0 cm,2);
    \draw[thick,solid,fill=white] (0cm,0) circle (.5cm);
    \draw[thick,solid,fill=white] (1cm,1) circle (.5cm);
        \draw[thick,solid,fill=white] (-1cm,1) circle (.5cm);
        \draw[thick,solid,fill=white] (0cm,2) circle (.5cm);
  \end{tikzpicture}
\begin{tikzpicture}[scale=.3]
    \draw[thick] (0 cm,0) -- (1 cm,1);
        \draw[thick] (0 cm,0) -- (-1 cm,1);
        \draw[thick] (1 cm,1) -- (0 cm,2);
        \draw[thick] (-1 cm,1) -- (0 cm,2);
    \draw[thick,solid,fill=white] (0cm,0) circle (.5cm);
    \draw[thick,solid,fill=white] (1cm,1) circle (.5cm);
        \draw[thick,solid,fill=white] (-1cm,1) circle (.5cm);
        \draw[thick,solid,fill=gray] (0cm,2) circle (.5cm);
  \end{tikzpicture}
  \begin{tikzpicture}[scale=.3]
    \draw[thick] (0 cm,0) -- (1 cm,1);
        \draw[thick] (0 cm,0) -- (-1 cm,1);
        \draw[thick] (1 cm,1) -- (0 cm,2);
        \draw[thick] (-1 cm,1) -- (0 cm,2);
    \draw[thick,solid,fill=white] (0cm,0) circle (.5cm);
    \draw[thick,solid,fill=gray] (1cm,1) circle (.5cm);
        \draw[thick,solid,fill=white] (-1cm,1) circle (.5cm);
        \draw[thick,solid,fill=gray] (0cm,2) circle (.5cm);
  \end{tikzpicture}
\begin{tikzpicture}[scale=.3]
    \draw[thick] (0 cm,0) -- (1 cm,1);
        \draw[thick] (0 cm,0) -- (-1 cm,1);
        \draw[thick] (1 cm,1) -- (0 cm,2);
        \draw[thick] (-1 cm,1) -- (0 cm,2);
    \draw[thick,solid,fill=white] (0cm,0) circle (.5cm);
    \draw[thick,solid,fill=white] (1cm,1) circle (.5cm);
        \draw[thick,solid,fill=gray] (-1cm,1) circle (.5cm);
        \draw[thick,solid,fill=gray] (0cm,2) circle (.5cm);
  \end{tikzpicture}
\begin{tikzpicture}[scale=.3]
    \draw[thick] (0 cm,0) -- (1 cm,1);
        \draw[thick] (0 cm,0) -- (-1 cm,1);
        \draw[thick] (1 cm,1) -- (0 cm,2);
        \draw[thick] (-1 cm,1) -- (0 cm,2);
    \draw[thick,solid,fill=white] (0cm,0) circle (.5cm);
    \draw[thick,solid,fill=gray] (1cm,1) circle (.5cm);
        \draw[thick,solid,fill=gray] (-1cm,1) circle (.5cm);
        \draw[thick,solid,fill=gray] (0cm,2) circle (.5cm);
  \end{tikzpicture}
\begin{tikzpicture}[scale=.3]
    \draw[thick] (0 cm,0) -- (1 cm,1);
        \draw[thick] (0 cm,0) -- (-1 cm,1);
        \draw[thick] (1 cm,1) -- (0 cm,2);
        \draw[thick] (-1 cm,1) -- (0 cm,2);
    \draw[thick,solid,fill=gray] (0cm,0) circle (.5cm);
    \draw[thick,solid,fill=gray] (1cm,1) circle (.5cm);
        \draw[thick,solid,fill=gray] (-1cm,1) circle (.5cm);
        \draw[thick,solid,fill=gray] (0cm,2) circle (.5cm);
  \end{tikzpicture}
\vspace{1em}

    \begin{tikzpicture}[scale=.3]
  \draw[thick] (-1 cm,3) -- (0 cm,2);
        \draw[thick] (-1 cm,1) -- (0 cm,2);
                \draw[thick] (0 cm,2) -- (1 cm,3);
    \draw[thick,solid,fill=white] (1cm,3) circle (.5cm);
    \draw[thick,solid,fill=white] (-1cm,3) circle (.5cm);
        \draw[thick,solid,fill=white] (-1cm,1) circle (.5cm);
        \draw[thick,solid,fill=white] (0cm,2) circle (.5cm);
  \end{tikzpicture}
    \begin{tikzpicture}[scale=.3]
  \draw[thick] (-1 cm,3) -- (0 cm,2);
        \draw[thick] (-1 cm,1) -- (0 cm,2);
                \draw[thick] (0 cm,2) -- (1 cm,3);
    \draw[thick,solid,fill=gray] (1cm,3) circle (.5cm);
    \draw[thick,solid,fill=gray] (-1cm,3) circle (.5cm);
        \draw[thick,solid,fill=white] (-1cm,1) circle (.5cm);
        \draw[thick,solid,fill=white] (0cm,2) circle (.5cm);
  \end{tikzpicture}
  \begin{tikzpicture}[scale=.3]
    \draw[thick] (-1 cm,3) -- (0 cm,2);
        \draw[thick] (-1 cm,1) -- (0 cm,2);
                \draw[thick] (0 cm,2) -- (1 cm,3);
    \draw[thick,solid,fill=white] (1cm,3) circle (.5cm);
    \draw[thick,solid,fill=gray] (-1cm,3) circle (.5cm);
        \draw[thick,solid,fill=white] (-1cm,1) circle (.5cm);
        \draw[thick,solid,fill=white] (0cm,2) circle (.5cm);
  \end{tikzpicture}
  \begin{tikzpicture}[scale=.3]
  \draw[thick] (-1 cm,3) -- (0 cm,2);
        \draw[thick] (-1 cm,1) -- (0 cm,2);
                \draw[thick] (0 cm,2) -- (1 cm,3);
    \draw[thick,solid,fill=gray] (1cm,3) circle (.5cm);
    \draw[thick,solid,fill=white] (-1cm,3) circle (.5cm);
        \draw[thick,solid,fill=white] (-1cm,1) circle (.5cm);
        \draw[thick,solid,fill=white] (0cm,2) circle (.5cm);
  \end{tikzpicture}
\begin{tikzpicture}[scale=.3]
   \draw[thick] (-1 cm,3) -- (0 cm,2);
        \draw[thick] (-1 cm,1) -- (0 cm,2);
                \draw[thick] (0 cm,2) -- (1 cm,3);
    \draw[thick,solid,fill=gray] (1cm,3) circle (.5cm);
    \draw[thick,solid,fill=gray] (-1cm,3) circle (.5cm);
        \draw[thick,solid,fill=white] (-1cm,1) circle (.5cm);
        \draw[thick,solid,fill=gray] (0cm,2) circle (.5cm);
  \end{tikzpicture}
\begin{tikzpicture}[scale=.3]
  \draw[thick] (-1 cm,3) -- (0 cm,2);
        \draw[thick] (-1 cm,1) -- (0 cm,2);
                \draw[thick] (0 cm,2) -- (1 cm,3);
    \draw[thick,solid,fill=gray] (1cm,3) circle (.5cm);
    \draw[thick,solid,fill=gray] (-1cm,3) circle (.5cm);
        \draw[thick,solid,fill=gray] (-1cm,1) circle (.5cm);
        \draw[thick,solid,fill=gray] (0cm,2) circle (.5cm);
  \end{tikzpicture}
\end{center}
\normalsize
\label{ex:ex1}
\end{example}.

The number of plane partitions of height $\p$ in a poset $\Po$ with $n$ elements is counted by its \defn{order polynomial} $\left|\PP^{[\p]}(\Po)\right|$.  As its name suggests, the order polynomial is a polynomial in $\p$ of degree $n$~\cite[Section 13]{stanley1972ordered}.  One can check that both posets from~\Cref{ex:ex1} have the same order polynomial: \[\left|\PP^{[\p]}\left(\raisebox{-0.4\height}{\begin{tikzpicture}[scale=.2]
    \draw[thick] (0 cm,0) -- (1 cm,1);
        \draw[thick] (0 cm,0) -- (-1 cm,1);
        \draw[thick] (1 cm,1) -- (0 cm,2);
        \draw[thick] (-1 cm,1) -- (0 cm,2);
    \draw[thick,solid,fill=white] (0cm,0) circle (.5cm);
    \draw[thick,solid,fill=white] (1cm,1) circle (.5cm);
        \draw[thick,solid,fill=white] (-1cm,1) circle (.5cm);
        \draw[thick,solid,fill=white] (0cm,2) circle (.5cm);
 \end{tikzpicture}}\right)\right|=\left|\PP^{[\p]}\left(\raisebox{-0.4\height}{\begin{tikzpicture}[scale=.2]
        \draw[thick] (-1 cm,3) -- (0 cm,2);
        \draw[thick] (-1 cm,1) -- (0 cm,2);
                \draw[thick] (0 cm,2) -- (1 cm,3);
    \draw[thick,solid,fill=white] (1cm,3) circle (.5cm);
    \draw[thick,solid,fill=white] (-1cm,3) circle (.5cm);
        \draw[thick,solid,fill=white] (-1cm,1) circle (.5cm);
        \draw[thick,solid,fill=white] (0cm,2) circle (.5cm);
  \end{tikzpicture}}\right)\right|=\frac{1}{12}(\p+1)(\p+2)^2(\p + 3).\]

This example motivates the following definition.

\begin{definition}
Let $\Po, \mathcal{Q}$ be two finite posets.  We say that $\Po$ and $\mathcal{Q}$ are \defn{doppelg\"angers} if they have the same order polynomial.
\label{def:doppel}
\end{definition}

Here are some trivial examples: any poset is its own doppelg\"anger, as are any poset and its \defn{dual} (obtained by reversing the order relation, which flips the Hasse diagram upside down).  We can explain the doppelg\"angers in~\Cref{ex:ex1} with the simple observation that the poset obtained by adjoining a new \emph{minimal} element to a poset $\Po$ is a doppelg\"anger with the poset obtained by adjoining a new \emph{maximal} element to $\Po$.

\subsection{Minuscule Doppelg\"angers}

We use the combinatorics of the \K-theoretic Schubert calculus of minuscule flag varieties to bijectively establish certain pairs of posets $(\Lambda_X,\Phi^+_Y)$ as doppelg\"angers in \Cref{thm:main_thm1}.   As summarized in~\Cref{sec:philo}, our philosophy is that---given a ring with basis indexed by combinatorial objects and a combinatorial rule for computing structure coefficients---multiplicity-free products in that ring may be equivalently stated as bijections.   In addition to providing two infinite families of doppelg\"angers, we also exhibit one non-trivial exceptional pair.   The first poset $\Lambda_X$ in each pair is a poset that describes the Schubert cell decomposition of a minuscule flag variety (a \defn{minuscule poset}, defined in~\Cref{sec:minuscule}); the second poset $\Phi^+_Y$ is a poset on the positive roots of another, {\it a priori} unrelated, root system (defined in~\Cref{sec:coincidental}).  To streamline our exposition, we defer the precise definitions and root-theoretic meanings of these posets until~\Cref{sec:minuscule,sec:coincidental}, simply defining the posets for now by their Hasse diagrams in~\Cref{fig:names}.


%

\begin{figure}[htbp]
\begin{center}
\begin{tabular}{|c|cc|cc|} \hline 
	Label &
	Poset Name & Hasse Diagram & Hasse Diagram  & Poset Name \\  \hline \hline  
(B) &
 $\Lambda_{\Gr(k,n)}$ & \raisebox{-0.5\height}{\begin{tikzpicture}[scale=.35]
	\draw[thick] (1 cm,3) -- (2 cm,4) -- (1 cm,5) -- (0cm,4)--(1cm,3);
    \draw[thick] (3,1)--(4,2);
    \draw[thick] (3,7)--(4,6);
    \draw[thick,dotted] (1,3)--(3,1);
    \draw[thick,dotted] (2,4)--(4,2);
    \draw[thick,dotted] (1,5)--(4,2);
    \draw[thick,dotted] (4,6)--(6,4);
    \draw[thick,dotted] (4,2)--(6,4);
    \draw[thick,dotted] (5,3)--(2,6);
    \draw[thick,dotted] (2 cm,2) -- (5 cm,5);
    \draw[thick,dotted] (2 cm,4) -- (4 cm,6);
    \draw[thick,dotted] (1 cm,5) -- (3 cm,7);

    \draw[thick,solid,fill=white] (0cm,4) circle (.5cm) ;
    \draw[thick,solid,fill=white] (1cm,3) circle (.5cm) ;
    \draw[thick,solid,fill=white] (3cm,7) circle (.5cm) ;
    \draw[thick,solid,fill=white] (3cm,1) circle (.5cm) ;
    \draw[thick,solid,fill=white] (2cm,4) circle (.5cm) ;
    \draw[thick,solid,fill=white] (1cm,5) circle (.5cm) ;
    \draw[thick,solid,fill=white] (4cm,2) circle (.5cm) ;
    \draw[thick,solid,fill=white] (4cm,6) circle (.5cm) ;
    \draw[thick,solid,fill=white] (6cm,4) circle (.5cm) ;
    \draw [decorate,decoration={brace,amplitude=10pt}] (7,3.5) -- (3.5,0) node [midway, below,yshift=-0.8em,xshift=1.2em] {$n-k$};
    \draw [decorate,decoration={brace,amplitude=10pt}] (2.5,0) -- (-1,3.5) node [midway, below,yshift=-0.8em,xshift=-1.2em] {$k$};
\end{tikzpicture}} & 
\raisebox{-0.5\height}{\begin{tikzpicture}[scale=.35]
	\draw[thick] (4 cm,2) -- (5 cm,3)--(6,2);
    \draw[thick,dotted] (0 cm,4) -- (2 cm,6);
	\draw[thick] (0 cm,6) -- (1 cm,7);
    \draw[thick] (2 cm,6) -- (0 cm,8);
    \draw[thick,dotted] (5,3) -- (2,6);
    \draw[thick,dotted] (4,2) -- (0,6);
    \draw[thick,dotted] (3,5) -- (1,3);
    \draw[thick,dotted] (2,2) -- (0,4);
    \draw[thick,dotted] (1,5) -- (0,4);
    \draw[thick,dotted] (2,2) -- (4,4);
 
    \draw[thick,solid,fill=white] (2cm,2) circle (.5cm) ;
    \draw[thick,solid,fill=white] (0cm,6) circle (.5cm) ;
    \draw[thick,solid,fill=white] (0cm,8) circle (.5cm) ;
    \draw[thick,solid,fill=white] (0cm,4) circle (.5cm) ;
    \draw[thick,solid,fill=white] (1cm,7) circle (.5cm) ;
    \draw[thick,solid,fill=white] (4cm,2) circle (.5cm) ;
    \draw[thick,solid,fill=white] (2cm,6) circle (.5cm) ;
    \draw[thick,solid,fill=white] (5cm,3) circle (.5cm) ;
    \draw[thick,solid,fill=white] (6cm,2) circle (.5cm) ;
    \draw [decorate,decoration={brace,amplitude=10pt}] (1.5,1) -- (-1,3.5) node [midway, below,yshift=-0.8em,xshift=-1.2em] {\scalebox{0.8}{$n-2k+1$}};
    \draw [decorate,decoration={brace,amplitude=10pt}] (0.5,9) -- (7,2.5) node [midway, above,yshift=0.8em,xshift=1.2em] {$n-1$};
\end{tikzpicture}} & $\trap{k}{n}$  \\ \hline
(H) &
 $\Lambda_{\OG(6,12)}$ &  \raisebox{-0.5\height}{\begin{tikzpicture}[scale=.25]
    \draw[thick] (3 cm,5) -- (5 cm,7);
	\draw[thick] (3 cm,7) -- (5 cm,9);
    \draw[thick] (2 cm,8) -- (5 cm,11);
    \draw[thick] (1 cm,9) -- (4 cm,12);
    \draw[thick] (3 cm,13) -- (5 cm,11);
    \draw[thick] (3cm,11) -- (5 cm,9);
    \draw[thick] (2cm,10) -- (5cm,7);
    \draw[thick] (1cm,9) -- (4cm,6);
    \draw[thick,solid,fill=white] (3cm,5) circle (.5cm) ;
    \draw[thick,solid,fill=white] (4cm,6) circle (.5cm) ;
    \draw[thick,solid,fill=white] (5cm,7) circle (.5cm) ;
    \draw[thick,solid,fill=white] (3cm,7) circle (.5cm) ;
    \draw[thick,solid,fill=white] (4cm,8) circle (.5cm) ;
    \draw[thick,solid,fill=white] (5cm,9) circle (.5cm) ;
    \draw[thick,solid,fill=white] (2cm,8) circle (.5cm) ;
    \draw[thick,solid,fill=white] (3cm,9) circle (.5cm) ;
    \draw[thick,solid,fill=white] (4cm,10) circle (.5cm) ;
    \draw[thick,solid,fill=white] (5cm,11) circle (.5cm) ;
    \draw[thick,solid,fill=white] (1cm,9) circle (.5cm) ;
    \draw[thick,solid,fill=white] (2cm,10) circle (.5cm) ;
    \draw[thick,solid,fill=white] (3cm,11) circle (.5cm) ;
    \draw[thick,solid,fill=white] (4cm,12) circle (.5cm) ;
    \draw[thick,solid,fill=white] (3cm,13) circle (.5cm) ;
\end{tikzpicture}} & \raisebox{-0.5\height}{\begin{tikzpicture}[scale=.25]
    \draw[thick] (3 cm,9) -- (6 cm,12);
    \draw[thick] (1 cm,9) -- (5 cm,13);
    \draw[thick] (3 cm,13) -- (4 cm,14);
    \draw[thick] (1 cm,17) -- (6 cm,12);
    \draw[thick] (3 cm,13) -- (5 cm,11);
    \draw[thick] (3cm,11) -- (5 cm,9);
    \draw[thick] (2cm,10) -- (3cm,9);
    \draw[thick,solid,fill=white] (5cm,9) circle (.5cm) ;
    \draw[thick,solid,fill=white] (3cm,9) circle (.5cm) ;
    \draw[thick,solid,fill=white] (4cm,10) circle (.5cm) ;
    \draw[thick,solid,fill=white] (5cm,11) circle (.5cm) ;
    \draw[thick,solid,fill=white] (6cm,12) circle (.5cm) ;
    \draw[thick,solid,fill=white] (1cm,9) circle (.5cm) ;
    \draw[thick,solid,fill=white] (2cm,10) circle (.5cm) ;
    \draw[thick,solid,fill=white] (3cm,11) circle (.5cm) ;
    \draw[thick,solid,fill=white] (4cm,12) circle (.5cm) ;
    \draw[thick,solid,fill=white] (5cm,13) circle (.5cm) ;
    \draw[thick,solid,fill=white] (3cm,13) circle (.5cm) ;
    \draw[thick,solid,fill=white] (4cm,14) circle (.5cm) ;
    \draw[thick,solid,fill=white] (3cm,15) circle (.5cm) ;
    \draw[thick,solid,fill=white] (2cm,16) circle (.5cm) ;
    \draw[thick,solid,fill=white] (1cm,17) circle (.5cm) ;
\end{tikzpicture}} & $\Phi^+_{H_3}$  \\ \hline
(I) & 
$\Lambda_{\QQ^{2n}}$ & \raisebox{-0.5\height}{\begin{tikzpicture}[scale=.35]
	\draw[thick,dotted] (1 cm,1) -- (3 cm,3);
    \draw[thick] (3 cm,3) -- (4 cm,4);
    \draw[thick] (2 cm,4) -- (3 cm,5);
    \draw[thick,dotted] (3 cm,5) -- (5 cm,7);
    \draw[thick] (2 cm,4) -- (3 cm,3);
    \draw[thick] (3 cm,5) -- (4 cm,4);
    \draw[thick,solid,fill=white] (1cm,1) circle (.5cm) ;
    \draw[thick,solid,fill=white] (5cm,7) circle (.5cm) ;
    \draw[thick,solid,fill=white] (3cm,3) circle (.5cm) ;
    \draw[thick,solid,fill=white] (4cm,4) circle (.5cm) ;
    \draw[thick,solid,fill=white] (2cm,4) circle (.5cm) ;
    \draw[thick,solid,fill=white] (3cm,5) circle (.5cm) ;
    \draw [decorate,decoration={brace,amplitude=10pt}] (5,3.5) -- (1.5,0) node [midway, below,yshift=-0.8em,xshift=1.2em] {$n$};
    \draw [decorate,decoration={brace,amplitude=10pt}] (1,4.5) -- (4.5,8) node [midway, above,yshift=0.7em,xshift=-1.3em] {$n$};
\end{tikzpicture}} &
\raisebox{-0.5\height}{\begin{tikzpicture}[scale=.35]
    \draw[thick] (2 cm,4) -- (4 cm,6);
    \draw[thick,dotted] (4 cm,6) -- (7 cm,9);
    \draw[thick] (3 cm,5) -- (4 cm,4);
    \draw[thick,solid,fill=white] (4cm,4) circle (.5cm) ;
    \draw[thick,solid,fill=white] (2cm,4) circle (.5cm) ;
    \draw[thick,solid,fill=white] (3cm,5) circle (.5cm) ;
    \draw[thick,solid,fill=white] (4cm,6) circle (.5cm) ;
    \draw[thick,solid,fill=white] (7cm,9) circle (.5cm) ;
    \draw [decorate,decoration={brace,amplitude=10pt}] (1,4.5) -- (6.5,10) node [midway, above,yshift=0.7em,xshift=-1.3em] {\scalebox{0.8}{$2n-1$}};
\end{tikzpicture}} & $\Phi^+_{I_2(2n)}$ \\ \hline
\end{tabular}
\end{center}
\caption{The names and Hasse diagrams of the six posets considered in~\Cref{thm:main_thm1}.  The first and last rows are infinite families, while the middle row is a single exceptional example.}
\label{fig:names}
\end{figure}

\begin{theorem}
\label{thm:main_thm1}
As defined in~\Cref{fig:names}, fix \[(\X,\Y) \in \Big\{\big(\Gr(k,n),B_{k,n}\big),\big(\OG(6,12),H_3\big),\big(\QQ^{2n},I_2(2n)\big) \Big\}.\]  Then, $\Lambda_\X$ and $\Phi^+_\Y$ are doppelg\"angers.
Moreover, there is an explicit, type-uniform bijection
\[ \PP^{[\p]}\left(\Lambda_\X\right) \simeq \PP^{[\p]}\left(\Phi^+_\Y\right). \]

\end{theorem}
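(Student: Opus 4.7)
The plan is to realize the desired bijection as an instance of the philosophy outlined in~\Cref{sec:philo}: in each case, a multiplicity-free product in the $K$-theoretic Schubert ring of the minuscule flag variety $\X$ admits two combinatorial expansions, and these two expansions are respectively enumerated by $\PP^{[\p]}(\Lambda_\X)$ and $\PP^{[\p]}(\Phi^+_\Y)$.

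The first step is to convert each $P$-partition of height $\p$ into an increasing tableau, following Thomas--Yong: a $P$-partition $T \colon \Lambda_\X \to \{0,1,\ldots,\p\}$ becomes a chain of $\p$ nested order filters, and hence an increasing tableau of straight shape $\Lambda_\X$ with entries in $\{1,\ldots,\p\}$. This interpretation makes $\PP^{[\p]}(\Lambda_\X)$ an object on which Thomas--Yong $K$-jeu de taquin acts. The analogous step for $\PP^{[\p]}(\Phi^+_\Y)$ rests on Proctor's observation that the order polynomials of the coincidental positive root posets $B_{k,n}$, $\Phi^+_{H_3}$, and $\Phi^+_{I_2(2n)}$ match those of the corresponding minuscule posets, so that the heap combinatorics of a reduced word governing $\Lambda_\X$ can be transported to $\Phi^+_\Y$ via a common ambient word.

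The bijective map itself would then be $K$-rectification through this common ambient shape. Concretely, fix a linear extension $v$ of $\Phi^+_\Y$; given $T \in \PP^{[\p]}(\Lambda_\X)$, prepend the chain $v$ to $T$ to obtain a skew increasing tableau, and apply iterated $K$-jeu de taquin slides until the shape becomes $\Phi^+_\Y$. Independence from the order of slides is the $K$-theoretic analogue of Haiman's rectification theorem due to Thomas--Yong, and the output lies in $\PP^{[\p]}(\Phi^+_\Y)$ precisely because the underlying Schubert product is multiplicity-free. Inverting the slides shows the map is a bijection, and height preservation follows from the fact that $K$-jeu de taquin preserves the number of distinct entries.

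The main obstacle I expect is the exceptional case $(\OG(6,12),\, H_3)$. Since $H_3$ is non-crystallographic, $\Phi^+_{H_3}$ does not arise from any genuine Schubert calculus, so the common ambient word underlying the bijection must be constructed by hand rather than read off from a standard geometric embedding or a folding argument. A careful, likely computer-assisted, verification on this $15$-element poset will be needed to confirm that $K$-rectification remains well-defined and shape-confluent in this sporadic setting. A secondary difficulty is packaging the argument uniformly across the two infinite families and the exception so that a single type-independent statement of the bijection is justified; this will require isolating the precise combinatorial input from Proctor's coincidence that Thomas--Yong rectification actually uses, and verifying it in each of the three cases separately.
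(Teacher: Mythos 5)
Your proposal correctly identifies the governing philosophy (a multiplicity-free product read through \K-rectification), but it is missing the central structural idea and misstates the key combinatorial input, so as written it does not go through. First, the product you invoke cannot live in the \K-theory of $\X$ itself: $\Phi^+_\Y$ is not the shape of any Schubert class of $\X$, and in the $(\OG(6,12),H_3)$ case it is not attached to any crystallographic datum at all. What the argument actually requires is a \emph{simultaneous embedding of both posets into a third, larger ambient minuscule poset} $\Lambda_\Z$: via a Dynkin-diagram embedding, $\Lambda_\X$ sits inside $\Lambda_\Z$ as a skew shape $\vv/\uu$, while $\Phi^+_\Y$ sits inside $\Lambda_\Z$ as a straight shape $\w$, with $\Z$ being $\OG(n,2n)$, the type-$E_7$ Freudenthal variety $\mathsf{G}_\omega(\mathbb{O}^3,\mathbb{O}^6)$, and $\QQ^{4n-2}$ in the three cases. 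The relevant identity is $[\mathcal{O}_{X_{\check v}}]\cdot[\mathcal{O}_{X_u}]=[\mathcal{O}_{X_{\check w}}]$ in $K(\Z)$, not a product in $K(\X)$. This also dissolves your anticipated obstacle in the $H_3$ case: no ad hoc ``Schubert calculus for $H_3$'' is needed, because all sliding takes place inside the honest $E_7$ minuscule poset; the only case-specific work is a finite (computer) check that a certain rectification set is a singleton. Your appeal to ``Proctor's observation that the order polynomials match'' is, moreover, circular, since that equality is precisely the statement being proved.

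Second, the mechanics of the map are not as you describe. \K-jeu-de-taquin rectification is \emph{not} independent of the order of slides --- the paper emphasizes exactly this --- so there is no ``\K-theoretic analogue of Haiman's rectification theorem'' to lean on. Instead one fixes the rectification order by the minimal increasing tableau $\T^{\rm min}_{\uu}$ of the complementary straight shape $\uu\subseteq\Lambda_\Z$ (in case (B), a small shifted staircase); one does not prepend a linear extension of $\Phi^+_\Y$, which is the \emph{target} shape $\w$, not the region slid into. Bijectivity then comes from the infusion involution together with the Buch--Samuel theorem that minimal straight-shape tableaux in minuscule posets are unique rectification targets. Finally, ``multiplicity-free'' is neither the right hypothesis nor self-verifying: you need the product to consist of a \emph{single} term indexed by $\w$ (note that multiplicity-freeness in $H^\star$ does not in general persist in \K-theory, though single-term products do, by Knutson's theorem), and combinatorially this means proving $\R^{\vv}_{\w,\uu}$ is a singleton and $\R^{\vv}_{\x,\uu}=\emptyset$ for $\x\neq\w$. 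In case (B) this uniqueness is the real content of the proof, established via reading words, Demazure products, the doubling construction for shifted tableaux, and the fact that a Grassmannian permutation has a unique increasing (anti-)straight tableau whose reading word is reduced; your proposal offers no argument for this step.
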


Our arguments are usefully interpreted as statements about rational equivalence of certain generalized Schubert and Richardson subvarieties of minuscule flag varieties. That is, each of the bijections of \Cref{thm:main_thm1} corresponds to the fact that a certain Richardson variety represents the same element of the Chow ring as a certain Schubert variety. 
Nonetheless, our statements and arguments are purely combinatorial and do not logically depend on these geometric considerations.  In particular, the key proofs should be accessible to a reader who knows no geometry---such readers may skip reading \Cref{sec:definitions,sec:minuscule,sec:coincidental,sec:embedding,sec:ring}, referring to them only for notation.  

Although our bijections are type-uniform, our proofs are only partially so.  
It is an open problem to identify other interesting examples of doppelg\"angers, or to classify them in general.  

\begin{remark}
The equality \begin{equation}\left|\PP^{[\p]}\left(\Lambda_{\Gr(k,n)}\right)\right| = \left|\PP^{[\p]}\big(\trap{k}{n}\big)\right|\label{eq:rect_trap}\end{equation} was first proven nonbijectively by R.~Proctor in~\cite{proctor1983shifted} using a branching rule due to R.~King from the Lie algebra inclusion $\mathfrak{sp}_{2n}(\CC) \hookrightarrow \mathfrak{sl}_{2n}(\CC)$\cite{king1975branching,littlewood1950theory}.  Indeed, R.~Proctor remarks that ``the question of a combinatorial correspondence for [\Cref{eq:rect_trap}] seems to be a complete mystery.''

For the case $\p=1$ of~\Cref{eq:rect_trap}, J.~Stembridge produced a jeu-de-taquin bijection~\cite{stembridge1986trapezoidal}, while V.~Reiner gave an argument using type $B$ noncrossing partitions~\cite{reiner1997non}. For $\p\leq 2$, S.~Elizalde gave a bijection in the language of pairs of lattice paths~\cite{elizalde2015bijections}.   No bijection was previously known for $\p >2$, and the restriction of our bijection is not immediately equivalent to any of these known special cases for $\p\leq 2$.
\end{remark}

\begin{remark}
The other cases of \Cref{thm:main_thm1} are easy to establish directly~\cite[Theorems 3.1.24 and 3.1.27]{williams13cataland}; in these cases, we provide the first bijections and the first geometric interpretations. 
\end{remark}


%

\subsection{\K-Theoretic Schubert Calculus}

To interpret the examples of~\Cref{fig:names} as statements in Schubert calculus, the key observation we make is that for \[(\X,\Y) \in \Big\{\big(\Gr(k,n),B_{k,n}\big),\big(\OG(6,12),H_3\big),\big(\QQ^{2n},I_2(2n)\big) \Big\},\] both the posets $\Lambda_\X$ and $\Phi^+_\Y$ simultaneously occur as subposets of a larger, ambient, minuscule poset $\Lambda_\Z$.  
We denote these two embeddings by
\begin{equation}\label{eq:uvw}
\Theta(\Lambda_{\X}) \subseteq \Lambda_{\Z} \hspace{3em} \text{ and } \hspace{3em} \chi(\Phi^+_{\Y})  \subseteq \Lambda_{\Z}.
\end{equation}

As we explain in~\Cref{sec:embedding}, the embedding of $\Lambda_\X$ into $\Lambda_\Z$ comes from an embedding of one root system in another, while the duals of $\Phi^+_\Y$ appear as order ideals in $\Lambda_\Z$ and are posets on the positive roots of yet a third root system.\footnote{We thank Robert Proctor for pointing out this fact for $H_3$, long before we had any idea how to make sense of it.}  \Cref{fig:triples} lists the triples $(\Lambda_\X,\Lambda_\Z,\Phi^+_\Y)$ corresponding to the doppelg\"angers in~\Cref{fig:names}, while specific examples of these poset embeddings are illustrated in~\Cref{fig:coincidental_top_half_minuscules}.   In~\Cref{fig:triples}, the symbols $\X$ and $\Z$ specify a minuscule flag variety (\Cref{sec:minuscule}), while the symbol $\Y$ is a bookkeeping device that specifies a Coxeter-Cartan type (\Cref{sec:coincidental}).

\begin{figure}[htbp]
\[\begin{array}{c|ccccc} 
\text{Label} &
 \Lambda_\X & \hookrightarrow & \Lambda_\Z & \xmapsto{\rect} & \Phi^+_\Y \\ \hline
\mathrm{(B)} & 
\Lambda_{\Gr(k,n)}&   & \Lambda_{\mathsf{OG}(n,2n)}  & & \Phi^+_{B_{k,n}}\\ 
\mathrm{(H)} &
\Lambda_{\OG(6,12)}& & \Lambda_{\mathsf{G}_{\omega}(\mathbb{O}^3,\mathbb{O}^6)} &  & \Phi^+_{H_3}\\ 
\mathrm{(I)} &
 \Lambda_{\QQ^{2n}}&  & \Lambda_{\mathbb{Q}^{4n-2}} & & \Phi^+_{I_2(2n)} \\ 
\end{array}\]
\caption{Triples $(\X,\Y,\Z)$, where $\Lambda_\X$ and $\Phi^+(\Y)$ are doppelg\"angers.  The symbols $\X$ and $\Z$ specify minuscule flag varieties, while $\Y$ relates to a Coxeter-Cartan type.  As illustrated in~\Cref{fig:coincidental_top_half_minuscules}, both the poset $\Lambda_{\X}$ and the poset $\Phi^+_{\Y}$ embed in the ambient minuscule poset $\Lambda_{\Z}$; a tableau of shape $\Phi^+_{\Y}$ is then obtained from one of shape $\Lambda_\X$ by \K-rectification.}  
\label{fig:triples}
\end{figure}





\begin{figure}[htbp]
\[\begin{array}{ccc}
\raisebox{-0.5\height}{\begin{tikzpicture}[scale=.35]
	\draw[thick] (0 cm,0) -- (4 cm,4);
	\draw[thick] (0 cm,2) -- (3 cm,5);
    \draw[thick] (0 cm,4) -- (2 cm,6);
	\draw[thick] (0 cm,6) -- (1 cm,7);
    \draw[thick] (4 cm,4) -- (0 cm,8);
	\draw[thick] (3 cm,3) -- (0 cm,6);
	\draw[thick] (2 cm,2) -- (0 cm,4);
	\draw[thick] (1 cm,1) -- (0 cm,2);
    \draw[thick,solid,fill=gray] (0cm,0) circle (.5cm) ;
    \draw[thick,solid,fill=gray] (0cm,2) circle (.5cm) ;
    \draw[ultra thick,solid,fill=gray] (0cm,4) circle (.5cm) ;
    \draw[thick,solid,fill=white] (0cm,6) circle (.5cm) ;
    \draw[thick,solid,fill=white] (0cm,8) circle (.5cm) ;
    \draw[thick,solid,fill=gray] (1cm,1) circle (.5cm) ;
    \draw[ultra thick,solid,fill=gray] (1cm,3) circle (.5cm) ;
    \draw[ultra thick,solid,fill=white] (1cm,5) circle (.5cm) ;
    \draw[thick,solid,fill=white] (1cm,7) circle (.5cm) ;
    \draw[ultra thick,solid,fill=gray] (2cm,2) circle (.5cm) ;
    \draw[ultra thick,solid,fill=gray] (2cm,4) circle (.5cm) ;
    \draw[ultra thick,solid,fill=white] (2cm,6) circle (.5cm) ;
    \draw[ultra thick,solid,fill=gray] (3cm,3) circle (.5cm) ;
    \draw[ultra thick,solid,fill=white] (3cm,5) circle (.5cm) ;
    \draw[ultra thick,solid,fill=gray] (4cm,4) circle (.5cm) ;
\end{tikzpicture}}&
\raisebox{-0.5\height}{\begin{tikzpicture}[scale=.35]
	\draw[thick] (1 cm,1) -- (6 cm,6);
    \draw[thick] (3 cm,5) -- (5 cm,7);
	\draw[thick] (3 cm,7) -- (5 cm,9);
    \draw[thick] (2 cm,8) -- (6 cm,12);
    \draw[thick] (1 cm,9) -- (5 cm,13);
    \draw[thick] (3 cm,13) -- (4 cm,14);
    \draw[thick] (1 cm,17) -- (6 cm,12);
    \draw[thick] (3 cm,13) -- (5 cm,11);
    \draw[thick] (3cm,11) -- (5 cm,9);
    \draw[thick] (2cm,10) -- (6cm,6);
    \draw[thick] (1cm,9) -- (5cm,5);
    \draw[thick] (3cm,5) -- (4cm,4);
    \draw[thick,solid,fill=gray] (1cm,1) circle (.5cm) ;
    \draw[thick,solid,fill=gray] (2cm,2) circle (.5cm) ;
    \draw[thick,solid,fill=gray] (3cm,3) circle (.5cm) ;
    \draw[thick,solid,fill=gray] (4cm,4) circle (.5cm) ;
    \draw[thick,solid,fill=gray] (5cm,5) circle (.5cm) ;
    \draw[thick,solid,fill=gray] (6cm,6) circle (.5cm) ;
    \draw[ultra thick,solid,fill=gray] (3cm,5) circle (.5cm) ;
    \draw[ultra thick,solid,fill=gray] (4cm,6) circle (.5cm) ;
    \draw[ultra thick,solid,fill=gray] (5cm,7) circle (.5cm) ;
    \draw[ultra thick,solid,fill=gray] (3cm,7) circle (.5cm) ;
    \draw[ultra thick,solid,fill=gray] (4cm,8) circle (.5cm) ;
    \draw[ultra thick,solid,fill=gray] (5cm,9) circle (.5cm) ;
    \draw[ultra thick,solid,fill=gray] (2cm,8) circle (.5cm) ;
    \draw[ultra thick,solid,fill=gray] (3cm,9) circle (.5cm) ;
    \draw[ultra thick,solid,fill=white] (4cm,10) circle (.5cm) ;
    \draw[ultra thick,solid,fill=white] (5cm,11) circle (.5cm) ;
    \draw[thick,solid,fill=white] (6cm,12) circle (.5cm) ;
    \draw[ultra thick,solid,fill=gray] (1cm,9) circle (.5cm) ;
    \draw[ultra thick,solid,fill=white] (2cm,10) circle (.5cm) ;
    \draw[ultra thick,solid,fill=white] (3cm,11) circle (.5cm) ;
    \draw[ultra thick,solid,fill=white] (4cm,12) circle (.5cm) ;
    \draw[thick,solid,fill=white] (5cm,13) circle (.5cm) ;
    \draw[ultra thick,solid,fill=white] (3cm,13) circle (.5cm) ;
    \draw[thick,solid,fill=white] (4cm,14) circle (.5cm) ;
    \draw[thick,solid,fill=white] (3cm,15) circle (.5cm) ;
    \draw[thick,solid,fill=white] (2cm,16) circle (.5cm) ;
    \draw[thick,solid,fill=white] (1cm,17) circle (.5cm) ;
\end{tikzpicture}}&
\raisebox{-0.5\height}{\begin{tikzpicture}[scale=.35]
	\draw[thick] (-2 cm,-2) -- (4 cm,4);
    \draw[thick] (2 cm,4) -- (8 cm,10);
    \draw[thick] (2 cm,4) -- (3 cm,3);
    \draw[thick] (3 cm,5) -- (4 cm,4);
    \draw[thick,solid,fill=gray] (-1cm,-1) circle (.5cm) ;
   \draw[thick,solid,fill=gray] (-2cm,-2) circle (.5cm) ;
	\draw[thick,solid,fill=gray] (0cm,0) circle (.5cm) ;
    \draw[ultra thick,solid,fill=gray] (1cm,1) circle (.5cm) ;
    \draw[ultra thick,solid,fill=gray] (2cm,2) circle (.5cm) ;
    \draw[ultra thick,solid,fill=gray] (3cm,3) circle (.5cm) ;
    \draw[ultra thick,solid,fill=gray] (4cm,4) circle (.5cm) ;
    \draw[ultra thick,solid,fill=gray] (2cm,4) circle (.5cm) ;
    \draw[ultra thick,solid,fill=white] (3cm,5) circle (.5cm) ;
    \draw[ultra thick,solid,fill=white] (4cm,6) circle (.5cm) ;
    \draw[ultra thick,solid,fill=white] (5cm,7) circle (.5cm) ;
    \draw[thick,solid,fill=white] (6cm,8) circle (.5cm) ;
    \draw[thick,solid,fill=white] (7cm,9) circle (.5cm) ;
\draw[thick,solid,fill=white] (8cm,10) circle (.5cm) ;
\end{tikzpicture}} \\
\mathrm{(B)} & \mathrm{(H)} & \mathrm{(I)}
\end{array}\]

\caption{Examples of simultaneous embedding of the doppelg\"anger pairs $\Lambda_{\X}$ and (duals of) $\Phi^+_{\Y}$ in the minuscule posets $\Lambda_{\Z}$.  The vertices with thick borders correspond to $\Theta(\Lambda_{\X})$, while the gray vertices represent $\chi(\Phi^+_{\Y})$.}
\label{fig:coincidental_top_half_minuscules}
\end{figure}

%






These embeddings allow us to prove \Cref{thm:main_thm1} using a combinatorial model of the structure coefficients of \K-theoretic Schubert calculus on minuscule varieties.  Our~\Cref{thm:gen_main_thm} and \Cref{thm:gen_main_thm_Ring} give a much more general bijective framework for proving similar results.

\medskip

In more detail, the \defn{increasing tableaux of shape $\Po$ and height $\kk$}---written $\IT^{[\kk]}(\Po)$---are the strictly order-preserving maps from $\Po \to \{1,2,\ldots,\kk\}$.  For a ranked poset $\Po$ whose maximal chains are all of the same length $\hgt(\Po)$ (and, in particular, for all the posets of \Cref{fig:names}), there is a simple bijection \[\IT^{[\kk]}(\Po) \simeq \PP^{[\p]}(\Po),\] where $\kk=\p+\hgt(\Po)$ (see \Cref{prop:bij_PP_IT}).   The significant advantage that increasing tableaux enjoy over plane partitions is a well-developed theory of \K-theoretic jeu-de-taquin that is particularly well-behaved on minuscule posets.  This theory, which we review in~\Cref{sec:ring,sec:structure_coeffs}, was introduced for geometric purposes by H.~Thomas and A.~Yong in~\cite{thomas2009jeu} and was further developed by A.~Buch, E.~Clifford, H.~Thomas, M.~Samuel, and A.~Yong in~\cite{clifford2014k,buch2014k}.  We  only need its combinatorial features to prove \Cref{thm:main_thm1}: using the identification of increasing tableaux with plane partitions and the embeddings in~\Cref{eq:uvw}, the bijections of~\Cref{thm:main_thm1} are uniformly written \begin{align*} \IT^{[\kk]}(\Lambda_{X}) &\to \IT^{[\kk]}(\Phi^+_{\Y})   \\ \T &\mapsto \chi^{-1}\left(\rect\left(\Theta(\T)\right)\right),\end{align*} where $\rect$ is an operation called \emph{\K-rectification} defined using \K-theoretic jeu-de-taquin.

\begin{example}
We continue \Cref{ex:ex1} to give the bijection of~\Cref{thm:main_thm1} in the case \[\PP^{[1]}\left(\Lambda_{\Gr(2,4)}\right) \simeq \PP^{[1]}\left(\Phi^+_{B_{2,4}}\right).\]  After embedding both posets in $\Lambda_{\mathsf{OG}(4,8)}$, the bijection is given by the jeu-de-taquin computation illustrated below (for more details, see~\Cref{sec:ktheorycomb}).  A larger example with additional details is given in~\Cref{sec:Rectangles_and_Trapezoids}, and the boxed fillings in the middle two rows are explained in~\Cref{rem:previous_work}.  

\begin{align*}\tiny
\raisebox{-0.5\height}{
}$
\label{ex:ex3}
\end{example}








\subsection{Example: Rectangles and Shifted Trapezoids}\label{sec:Rectangles_and_Trapezoids}

For the impatient reader, in this section we give a detailed example of the bijection of~\Cref{thm:main_thm1} \[\PP^{[4]}\left(\Lambda_{\Gr(4,8)}\right) \simeq \PP^{[4]}\left(\Phi^+_{B_{4,8}}\right).\]










A plane partition in $\PP^{[4]}\left(\Lambda_{\Gr(4,8)}\right)$ can be drawn as an ordinary plane partition whose $3$-dimensional Ferrers diagram fits in a $4 {\times} 4 {\times} 4$ box.  We encode this plane partition by labeling the vertices of the Hasse diagram of $\Lambda_{\Gr(4,8)}$ with their heights, and add $i$ to the label of each vertex of height $i$ to obtain an increasing tableau of shape $\Lambda_{\Gr(4,8)}$ (the \defn{rectangle}).  The embedding of $\Lambda_{\Gr(4,8)}$ inside $\Lambda_{\OG(8,16)}$ has the effect of adding two empty half-diamonds of vertices---one to the top of the labeled Hasse diagram, and one to the bottom.  To conserve space, we have only drawn the lower part of this embedding, since our operations won't interact with the top half-diamond.

\[\vcenter{\hbox{\protect \resizebox{!}{1in}{\begin{tikzpicture}
\planepartition{{4,4,3,2},{4,3,3,2},{4,3,2,1},{2,1,1}}
\end{tikzpicture}}}} \mapsto 
\vcenter{\hbox{\protect\begin{tikzpicture}[scale=.4]
	
	\node (g) at (3, 3) {};
	\node (h) at (4, 4) {};
	\node (i) at (5, 5) {};
	\node (j) at (6, 6) {};
	
	\node (k) at (2, 4) {};
	\node (l) at (3, 5) {};
	\node (m) at (4, 6) {};
	\node (n) at (5, 7) {};
	
	\node (o) at (1, 5) {};
	\node (p) at (2, 6) {};
	\node (q) at (3, 7) {};
	\node (r) at (4, 8) {};
	
	\node (s) at (0, 6) {};
	\node (t) at (1, 7) {};
	\node (u) at (2, 8) {};
	\node (v) at (3, 9) {};
	
	
	\draw (g) -- (h) -- (i) -- (j);
	\draw (k) -- (l) -- (m) -- (n);
	\draw (o) -- (p) -- (q) -- (r);
	\draw (s) -- (t) -- (u) -- (v);
	\draw (g) -- (k) -- (o) -- (s);
	\draw (h) -- (l) -- (p) -- (t);
	\draw (i) -- (m) -- (q) -- (u);
	\draw (j) -- (n) -- (r) -- (v);
	
	
	\draw[thick,fill=white] (g) circle [radius=0.5cm] node {0};
	\draw[thick,fill=white] (h) circle [radius=0.5cm] node {1};
	\draw[thick,fill=white] (i) circle [radius=0.5cm] node {2};
	\draw[thick,fill=white] (j) circle [radius=0.5cm] node {2};
	\draw[thick,fill=white] (k) circle [radius=0.5cm] node {1};
	\draw[thick,fill=white] (l) circle [radius=0.5cm] node {2};
	\draw[thick,fill=white] (m) circle [radius=0.5cm] node {3};
	\draw[thick,fill=white] (n) circle [radius=0.5cm] node {3};
	\draw[thick,fill=white] (o) circle [radius=0.5cm] node {1};
	\draw[thick,fill=white] (p) circle [radius=0.5cm] node {3};
	\draw[thick,fill=white] (q) circle [radius=0.5cm] node {3};
	\draw[thick,fill=white] (r) circle [radius=0.5cm] node {4};
	\draw[thick,fill=white] (s) circle [radius=0.5cm] node {2};
	\draw[thick,fill=white] (t) circle [radius=0.5cm] node {4};
	\draw[thick,fill=white] (u) circle [radius=0.5cm] node {4};
	\draw[thick,fill=white] (v) circle [radius=0.5cm] node {4};
\end{tikzpicture}}}\mapsto
\vcenter{\hbox{\protect\begin{tikzpicture}[scale=.4]
	
	\node (g) at (3, 3) {};
	\node (h) at (4, 4) {};
	\node (i) at (5, 5) {};
	\node (j) at (6, 6) {};
	
	\node (k) at (2, 4) {};
	\node (l) at (3, 5) {};
	\node (m) at (4, 6) {};
	\node (n) at (5, 7) {};
	
	\node (o) at (1, 5) {};
	\node (p) at (2, 6) {};
	\node (q) at (3, 7) {};
	\node (r) at (4, 8) {};
	
	\node (s) at (0, 6) {};
	\node (t) at (1, 7) {};
	\node (u) at (2, 8) {};
	\node (v) at (3, 9) {};
	
	
	\draw (g) -- (h) -- (i) -- (j);
	\draw (k) -- (l) -- (m) -- (n);
	\draw (o) -- (p) -- (q) -- (r);
	\draw (s) -- (t) -- (u) -- (v);
	\draw (g) -- (k) -- (o) -- (s);
	\draw (h) -- (l) -- (p) -- (t);
	\draw (i) -- (m) -- (q) -- (u);
	\draw (j) -- (n) -- (r) -- (v);
	
	
	\draw[thick,fill=white] (g) circle [radius=0.5cm] node {1};
	\draw[thick,fill=white] (h) circle [radius=0.5cm] node {3};
	\draw[thick,fill=white] (i) circle [radius=0.5cm] node {5};
	\draw[thick,fill=white] (j) circle [radius=0.5cm] node {6};
	\draw[thick,fill=white] (k) circle [radius=0.5cm] node {3};
	\draw[thick,fill=white] (l) circle [radius=0.5cm] node {5};
	\draw[thick,fill=white] (m) circle [radius=0.5cm] node {7};
	\draw[thick,fill=white] (n) circle [radius=0.5cm] node {8};
	\draw[thick,fill=white] (o) circle [radius=0.5cm] node {4};
	\draw[thick,fill=white] (p) circle [radius=0.5cm] node {7};
	\draw[thick,fill=white] (q) circle [radius=0.5cm] node {8};
	\draw[thick,fill=white] (r) circle [radius=0.5cm] node {\scalebox{.9}{10}};
	\draw[thick,fill=white] (s) circle [radius=0.5cm] node {6};
	\draw[thick,fill=white] (t) circle [radius=0.5cm] node {9};
	\draw[thick,fill=white] (u) circle [radius=0.5cm] node {\scalebox{.9}{10}};
	\draw[thick,fill=white] (v) circle [radius=0.5cm] node {\scalebox{.9}{11}};
\end{tikzpicture}}}
\mapsto 
\vcenter{\hbox{\protect\begin{tikzpicture}[scale=.4]
	\node (a) at (0, 0) {};
	\node (b) at (1, 1) {};
	\node (c) at (0, 2) {};
	\node (d) at (2, 2) {};
	\node (e) at (1, 3) {};
	\node (f) at (0, 4) {};
	
	\node (g) at (3, 3) {};
	\node (h) at (4, 4) {};
	\node (i) at (5, 5) {};
	\node (j) at (6, 6) {};
	
	\node (k) at (2, 4) {};
	\node (l) at (3, 5) {};
	\node (m) at (4, 6) {};
	\node (n) at (5, 7) {};
	
	\node (o) at (1, 5) {};
	\node (p) at (2, 6) {};
	\node (q) at (3, 7) {};
	\node (r) at (4, 8) {};
	
	\node (s) at (0, 6) {};
	\node (t) at (1, 7) {};
	\node (u) at (2, 8) {};
	\node (v) at (3, 9) {};
	
	\draw (a) -- (b) -- (d) -- (g);
	\draw (b) -- (c) -- (e) -- (k);
	\draw (d) -- (e) -- (f) -- (o);
	
	\draw (g) -- (h) -- (i) -- (j);
	\draw (k) -- (l) -- (m) -- (n);
	\draw (o) -- (p) -- (q) -- (r);
	\draw (s) -- (t) -- (u) -- (v);
	\draw (g) -- (k) -- (o) -- (s);
	\draw (h) -- (l) -- (p) -- (t);
	\draw (i) -- (m) -- (q) -- (u);
	\draw (j) -- (n) -- (r) -- (v);
	
	\draw[thick,fill=white] (a) circle [radius=0.5cm] node {};
	\draw[thick,fill=white] (b) circle [radius=0.5cm] node {};
	\draw[thick,fill=white] (c) circle [radius=0.5cm] node {};
	\draw[thick,fill=white] (d) circle [radius=0.5cm] node {};
	\draw[thick,fill=white] (e) circle [radius=0.5cm] node {};
	\draw[thick,fill=white] (f) circle [radius=0.5cm] node {};
	
	\draw[thick,fill=white] (g) circle [radius=0.5cm] node {1};
	\draw[thick,fill=white] (h) circle [radius=0.5cm] node {3};
	\draw[thick,fill=white] (i) circle [radius=0.5cm] node {5};
	\draw[thick,fill=white] (j) circle [radius=0.5cm] node {6};
	\draw[thick,fill=white] (k) circle [radius=0.5cm] node {3};
	\draw[thick,fill=white] (l) circle [radius=0.5cm] node {5};
	\draw[thick,fill=white] (m) circle [radius=0.5cm] node {7};
	\draw[thick,fill=white] (n) circle [radius=0.5cm] node {8};
	\draw[thick,fill=white] (o) circle [radius=0.5cm] node {4};
	\draw[thick,fill=white] (p) circle [radius=0.5cm] node {7};
	\draw[thick,fill=white] (q) circle [radius=0.5cm] node {8};
	\draw[thick,fill=white] (r) circle [radius=0.5cm] node {\scalebox{.9}{10}};
	\draw[thick,fill=white] (s) circle [radius=0.5cm] node {6};
	\draw[thick,fill=white] (t) circle [radius=0.5cm] node {9};
	\draw[thick,fill=white] (u) circle [radius=0.5cm] node {\scalebox{.9}{10}};
	\draw[thick,fill=white] (v) circle [radius=0.5cm] node {\scalebox{.9}{11}};
\end{tikzpicture}}}\]

Our bijection sequentially performs \K-theoretic jeu-de-taquin slides into the lower vacant half-diamond---these slides behave just like their ordinary jeu-de-taquin counterparts, except that when there are ties, ``everybody wins'' (see \Cref{sec:jdt} for more details).  For $i<j$, this rule is determined by the local pictures

\[
}}}.\]

In fact---no matter the starting increasing tableau of shape $\Lambda_{\Gr(4,8)}$---after applying this process, the labeled vertices are \emph{always} the dual of the shape $\trap{4}{8}$ (the \defn{shifted trapezoid}).  We obtain the desired bijection by subtracting $i$ from the labels of each vertex of height $i$, and interpreting the resulting labeling as a plane partition in $\PP^{[4]}\left(\trap{4}{8}\right)$.

\subsection{Cohomology}
\label{sec:previous_work}

It is natural to ask what happens to our theory in the context of ordinary cohomological Schubert calculus.  To address this, we define a linear extension of a poset $\Po$ with $n$ elements to be a strictly order-preserving bijection $\T: \Po \to [n]$, where $[n]:=\{1,2, \ldots, n\}$.  That is, if $p \succ q \in \Po$ then $\T(p) > \T(q).$    We call linear extensions of a general poset $\Po$ \defn{standard tableaux}, and we write $\SYT(\Po)$ for the set of all linear extensions of $\Po$.

Just as increasing tableaux govern the structure coefficients of \K-theoretic Schubert calculus of minuscule varieties, standard tableaux are used to compute the Schubert structure coefficients in the ordinary cohomology ring~\cite{thomas2009combinatorial}. We note that every standard tableau is also an increasing tableau, reflecting the fact that the \K-theoretic Schubert calculus is a richer theory (but see~\Cref{thm:knutson_mult_free}).

The order polynomial of $\Po$ encodes the number of its standard tableaux.  As in~\cite[Proposition 13.1]{stanley1972ordered}, the leading coefficient of the order polynomial $\left|\PP^{[\ell]}\left(\Po\right)\right| $ of $\Po$ is  $\frac{\ell^n}{n!}\left|\SYT(\Po)\right|.$  Hence, equating leading coefficients, two doppelg\"angers $\Po$ and $\mathcal{Q}$ must have the same number of standard tableaux.   In fact, our bijections in~\Cref{thm:main_thm1} restrict from \K-theory to ordinary cohomology, giving bijections between the standard tableaux for the doppelg\"angers of~\Cref{fig:names}.

\begin{theorem}
Fix \[(\X,\Y) \in \Big\{\big(\Gr(k,n),B_{k,n}\big),\big(\OG(6,12),H_3\big),\big(\QQ^{2n},I_2(2n)\big) \Big\}.\]  Then under the inclusion $\SYT(\Po) \subseteq \IT^{[n]}(\Po)$, the bijections of~\Cref{thm:main_thm1} restrict to bijections
\[ \SYT\left(\Lambda_\X\right) \simeq \SYT\left(\Phi^+_\Y\right). \]
\end{theorem}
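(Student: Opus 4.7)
The plan is to exploit the fact that K-theoretic jeu-de-taquin degenerates to classical jeu-de-taquin on tableaux with pairwise distinct entries. Recall that for a poset $P$ with $n$ elements, the inclusion $\SYT(P) \subseteq \IT^{[n]}(P)$ picks out exactly those increasing tableaux using each label of $[n]$ exactly once. It therefore suffices to argue that, when the K-rectification used in \Cref{thm:main_thm1} is applied to a tableau with distinct entries, no K-jdt slide ever introduces a repeated label and the multiset of labels is preserved throughout.

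Both claims reduce to inspecting the K-jdt slide rules displayed in \Cref{sec:Rectangles_and_Trapezoids}. The first rule, with distinct neighbors $i < j$, coincides with an ordinary jdt slide and preserves the label multiset. The remaining (genuinely K-theoretic) rules each require two equal labels in adjacent cells in order to fire. Hence, starting from a tableau with pairwise distinct entries, only the classical rule can ever apply, and the entire sequence of K-jdt slides comprising $\rect$ reduces verbatim to a sequence of classical jdt slides.

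Now take $\T \in \SYT(\Lambda_\X)$. Since $\Theta$ embeds $\Lambda_\X$ into $\Lambda_\Z$ without altering labels, $\Theta(\T)$ fills $\Theta(\Lambda_\X)$ with the distinct labels $1, \ldots, n$. By the previous paragraph, $\rect(\Theta(\T))$ still carries the multiset $\{1, \ldots, n\}$ with all entries distinct. \Cref{thm:main_thm1} guarantees that this filling is supported on $\chi(\Phi^+_\Y)$; since $\Lambda_\X$ and $\Phi^+_\Y$ are doppelg\"angers, their order polynomials have the same degree, so $|\Phi^+_\Y| = n$. Pulling back by $\chi^{-1}$ then yields a strictly order-preserving bijection $\Phi^+_\Y \to [n]$, that is, an element of $\SYT(\Phi^+_\Y)$.

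This shows the restricted map lands in $\SYT(\Phi^+_\Y)$. Injectivity is inherited from the ambient bijection of \Cref{thm:main_thm1}. Surjectivity follows either by applying the same argument to the inverse rectification (a composition of reverse K-jdt slides, which degenerate to classical jdt by the same rule inspection), or by counting: as recorded in \Cref{sec:previous_work}, $|\SYT(P)|$ equals $n!$ times the leading coefficient of the order polynomial of $P$, so doppelg\"angers automatically have equal numbers of standard tableaux. The heart of the argument is the slide-rule inspection in the second paragraph, and no substantive obstacle arises.
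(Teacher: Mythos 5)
Your reduction to the local slide rules is exactly where the argument breaks. It is not true that every genuinely \K-theoretic rule needs two equal labels in order to fire: the last local picture in \Cref{sec:Rectangles_and_Trapezoids}, in which a single label $i$ lies above two cells marked $\bullet$ and is then copied into \emph{both} of them, fires with no repeated labels present anywhere in the tableau. So distinctness of the entries does not by itself force the slides to be classical. This matters because the map in \Cref{thm:main_thm1} is $\rect_{\T^{\rm min}_{\sf u}}$, and in the Grassmannian family with $k\geq 4$ the minimal tableau of the shifted staircase ${\sf u}=(k-1,k-2,\ldots,1)_*$ has repeated values (e.g.\ the two $3$'s in \Cref{fig:double}(a)), so entire antichains of ${\sf u}$ are vacated simultaneously and several $\bullet$'s coexist within a single slide; a labelled cell covering two of them is then a live possibility that your inspection does not exclude, and ruling it out for standard inputs genuinely uses the order relations of the filling, not just distinctness. (For the (H) and (I) triples, and for (B) with $k\leq 3$, $\T^{\rm min}_{\sf u}$ has all distinct entries, so only single bullets occur and your argument goes through; the gap bites precisely in the main case, R.~Proctor's identity.) Note also that the paper itself disposes of this statement in one line --- the corollary to \Cref{thm:gen_main_thm} simply asserts that the bijection ``obviously restricts'' --- so you are supplying detail the paper omits, but the detail supplied fails at the key point.

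The conclusion can be salvaged without analyzing which rules fire. Each $\swap_{a,b}$ preserves the \emph{set} of values occurring in the tableau: a cell labelled $a$ is changed to $b$ only when it is adjacent to a $b$-cell, and that $b$-cell is then itself changed to $a$ (the same holds with $b=\bullet$), so a value may be merged or duplicated but never eliminated or created. Hence \K-rectification preserves the value set. If $\T\in\SYT(\Lambda_\X)$, its image under the bijection of \Cref{thm:main_thm1} is an increasing tableau of shape $\chi(\Phi^+_\Y)$ whose value set is all of $[n]$; since $\left|\Phi^+_\Y\right|=n$, each value must then occur exactly once, i.e.\ the image is standard. Applying the same observation to the inverse (or your counting argument via the leading coefficient of the order polynomial, which is sound once the forward containment is established) gives surjectivity. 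Your shape identification and the inheritance of injectivity are fine; only the ``no \K-rule can ever fire'' step needs to be replaced.
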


\begin{remark}
In his study of dual equivalence~\cite{haiman1992dual}, M.~Haiman gave an elegant jeu-de-taquin bijection called \emph{rectification} for the identity 
 \begin{equation}\left|\SYT\left(\Lambda_{\Gr(k,n)}\right)\right| = \left|\SYT\left(\trap{k}{n}\right)\right|.\label{eq:syt_rect_trap}\end{equation}
Our bijection of~\Cref{thm:main_thm1} simultaneously generalizes M.~Haiman's bijection and provides the sought-after bijective proof of R.~Proctor's result for plane partitions.  In~\Cref{ex:ex3}, we have boxed the standard tableaux that correspond to the restriction of our bijection to M.~Haiman's result.
\label{rem:previous_work}
\end{remark}






\section{Philosophy and Outline}
\label{sec:philo}
\noindent
The general idea of our approach may be summarized as follows: 
\begin{quote}
Given a ring with a basis indexed by combinatorial objects and a combinatorial rule for structure coefficients, multiplicity-free products in the ring are equivalent to bijections. 
\end{quote}
 In this section, we give a short example of this philosophy by sketching the argument of a parallel result due to R.~Stanley~\cite{stanley1986symmetries}.  The organization of our paper then mirrors this example.

\begin{theorem}[{R.~Stanley~\cite[Section 3]{stanley1986symmetries}}]
The number of self-complementary plane partitions inside a $(2a) {\times} (2b) {\times} (2c)$ box is equal to the number of pairs of plane partitions, each fitting inside an $a {\times} b {\times} c$ box.
\label{thm:stanley}
\end{theorem}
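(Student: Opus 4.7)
The plan is to follow the stated philosophy: realize both counts as specializations appearing in a multiplicity-free identity in a ring whose basis is indexed by combinatorial objects. The natural choice is the ring $\Lambda$ of symmetric functions with its Schur basis (equivalently, the cohomology ring of a Grassmannian). By the standard height-function bijection, plane partitions in an $a \times b \times c$ box correspond to semistandard Young tableaux of shape $\mu := (b^a)$ with entries in $\{1, \ldots, a+c\}$; consequently $P(a,b,c) = s_\mu(1^{a+c})$, and pairs of such plane partitions are counted by $s_\mu(1^{a+c})^2$.

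For the other side, I would encode a self-complementary plane partition in a $(2a) \times (2b) \times (2c)$ box by exploiting the complementation involution $\pi_{i,j} \mapsto 2c - \pi_{2a+1-i,\,2b+1-j}$, which identifies such a plane partition with its restriction to half the box subject to an equator-compatibility constraint. This restricted data can be repackaged as a tableau of a doubled rectangular shape (or, equivalently, as a family of non-intersecting lattice paths via Lindstr\"om--Gessel--Viennot), and its count recovered as another specialization of a Schur function.

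The bridge between the two sides is the Cauchy identity
\[\sum_\lambda s_\lambda(x)\,s_\lambda(y) \;=\; \prod_{i,j} \frac{1}{1-x_iy_j},\]
suitably truncated or specialized so that the self-complementary plane partition count appears as the specialization of the left-hand side while $s_\mu(1^{a+c})^2$ appears as the isolated term indexed by $\mu$. The bijection itself is then the RSK correspondence (or a Bender--Knuth style variant): applied to a matrix naturally extracted from a self-complementary plane partition, RSK produces a pair of semistandard tableaux of shape $\mu$, which we then read back as an ordered pair of plane partitions in the $a \times b \times c$ box.

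The main obstacle is the second step --- pinning down the precise combinatorial encoding of a self-complementary plane partition as doubled-shape tableau data and matching it against the correct specialization of the Cauchy kernel so that the RSK map inverts to recover the plane partition intact. Once this identification is established, multiplicity-freeness of the relevant expansion forces the bijection, in direct parallel to how our main theorem deduces its bijection from \K-rectification applied inside a multiplicity-free product of minuscule Schubert classes.
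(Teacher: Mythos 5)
Your starting enumeration is fine: plane partitions in an $a \times b \times c$ box are equinumerous with semistandard tableaux of a fixed rectangular shape in a bounded alphabet, so pairs are counted by a square of a Schur principal specialization. And your instinct that a self-complementary plane partition is determined by its restriction to roughly half the box is exactly the right geometric observation. But the route you propose from there through the Cauchy identity and RSK does not go through, for two independent reasons.

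First, the Cauchy identity $\sum_\lambda s_\lambda(x)\,s_\lambda(y) = \prod_{i,j}(1-x_iy_j)^{-1}$ sums over \emph{all} partitions $\lambda$. Specializing $x = y = (1^{a+c})$ gives $\sum_\lambda s_\lambda(1^{a+c})^2$, and there is no way to ``isolate'' the single term $s_\mu(1^{a+c})^2$ with $\mu = (b^a)$ from this sum; the remaining infinitely many terms are nonzero. No truncation of the Cauchy kernel produces a multiplicity-free identity whose only Schur term on one side is $s_\mu$. Second, and relatedly, RSK applied to a nonnegative integer matrix produces a pair of SSYT of a \emph{common but varying} shape $\lambda$; it does not land in a fixed rectangular shape $\mu$. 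So the claim that ``RSK produces a pair of semistandard tableaux of shape $\mu$'' is false for a generic input, and there is no reason the matrix you extract from a self-complementary plane partition would be constrained so as to force shape $\mu$.

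What the paper does instead is use a different multiplicity-free identity: the Littlewood--Richardson expansion of the square of a \emph{rectangular} Schur function,
\[
s_{(a^b)}^2 \;=\; \sum_\gamma s_\gamma,
\]
where $\gamma$ runs over exactly those partitions that, together with their $180^\circ$ rotation, tile the $(2a)\times(2b)$ rectangle. This is a genuine product expansion (not a Cauchy-type bilinear sum), and it is multiplicity-free. The left side, specialized, counts pairs of rectangular SSYT, hence pairs of plane partitions in the small box; the right side counts SSYT of shape $\gamma$, which by the self-complementarity constraint extend uniquely to self-complementary plane partitions in the big box. The bijection realizing this identity is \emph{jeu-de-taquin rectification} (place the two rectangular tableaux kitty-corner and slide to a straight shape), not RSK. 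This is a fundamentally different mechanism: rectification moves a fixed skew filling to a straight shape, whereas RSK reads a biword and has no reason to respect the fixed rectangular shape you need. Your proposal would need to be restructured around the $s_{(a^b)}^2$ expansion and jeu-de-taquin rather than the Cauchy kernel and RSK.
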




\subsection{A Ring}
\label{sec:a_ring}
Recall that the \defn{ring of symmetric polynomials in $n$ variables} consists of those polynomials invariant under the natural action of the symmetric group $\mathfrak{S}_n$: \[\mathrm{Sym}_n:=\big\{f \in \mathbb{Q}[x_1,\ldots,x_{n}] : w \cdot f =f \text{ for all }w \in \mathfrak{S}_n\big\},\] where $w \cdot x_i=x_{w(i)}$.  

We fix some notation.  An \defn{integer partition} is a tuple \[\lambda=\left(\lambda_1 \geq \lambda_2 \geq \cdots \geq \lambda_n >0\right)\] with $\lambda_i \in \mathbb{Z}_{>0}$.  We say that $\lambda$ has $n$ \defn{parts}.  The representation theory of the symmetric and general linear groups assigns a poset to $\lambda$ as follows.  The \defn{Ferrers shape} of $\lambda$ is drawn in the quarter plane $\ZZ_{\geq_0} \times \ZZ_{\leq_0}$ as $k$ left-justified rows of axis-aligned unit squares (\defn{boxes}) of lengths $\lambda_i$.   We define a partial order on any collection of boxes in the quarter plane by letting a box be less than or equal to any box weakly below it \emph{and} weakly to its right.  When the collection of boxes defines a connected subset of the quarter plane, the Hasse diagram is obtained by vertically reflecting, then rotating the plane by $45^\circ$ counterclockwise, drawing edges between boxes that share an edge, and replacing all boxes by vertices.

For $\lambda$ a partition with at most $n$ parts, let $\mathrm{SSYT}_n(\lambda)$ be the set of \defn{semistandard tableaux of shape $\lambda$}---fillings of the Ferrers shape of $\lambda$ with numbers in $[n]$ that weakly increase across rows, and strictly increase down columns (see the top right tableau in~\Cref{fig:thmstanley} for an example).  The ring $\mathrm{Sym}_n$ has a linear basis of \defn{Schur polynomials} $\big\{s_\lambda : \lambda \text{ has at most } n \text{ parts}\big\}$, where  \[s_\lambda := \sum_{\T \in \mathrm{SSYT}_n(\lambda)} \, \prod_{i=1}^n x_i^{\text{number of times }i\text{ appears in }\T}.\]

\subsection{A Multiplicity-Free Product}
R.~Stanley's argument hinges on the following multiplicity-free identity in $\mathrm{Sym}_{b+c}$, expressing the square of a Schur polynomial indexed by the rectangular partition $\left(a^b\right):=\underbrace{(a,a,\ldots,a)}_{b \text{ parts}}$ in the Schur basis:
\begin{equation}
    s_{(a^b)}^2 = \sum_{\gamma} s_\gamma,
\label{eq:stanley}
\end{equation}
where $\gamma$ ranges over the set of partitions \[\bigg \{\big(a+\delta_1,\ldots,a+\delta_r,a-\delta_r,\ldots,a-\delta_1\big) : \delta = \big(\delta_1, \ldots,\delta_r\big) \subseteq (a^b) \bigg\}.\]  This expansion may be proven using a variant of the Littlewood-Richardson rule---more generally, products of two Schur polynomials indexed by different rectangular partitions are multiplicity free.

\begin{example}
\ytableausetup{boxsize=.5em}
For $a=b=2$, we have the $\binom{a+b}{a}=\binom{4}{2}=6$-term expansion
\[s_{\ydiagram{2,2}*[*(gray)]{2,2}}^2 = s_{\ydiagram{2,2,2,2}*[*(gray)]{2,2}}+s_{\ydiagram{3,2,2,1}*[*(gray)]{2,2}}+s_{\ydiagram{4,2,2}*[*(gray)]{2,2}}+s_{\ydiagram{3,3,1,1}*[*(gray)]{2,2}}+s_{\ydiagram{4,3,1}*[*(gray)]{2,2}}+s_{\ydiagram{4,4}*[*(gray)]{2,2}}.\]
\ytableausetup{boxsize=normal}
\label{ex:prod_sum}
\end{example}


\Cref{thm:stanley} follows from~\Cref{eq:stanley} as follows.  The terms in the product on the left-hand side of~\Cref{eq:stanley} are indexed by pairs of rectangular semistandard tableaux with entries in $[b+c]$ (top left of~\Cref{fig:thmstanley}).  By subtracting $i$ from the $i$th row, we produce a pair of plane partitions, each fitting inside an $a {\times} b {\times} c$ box, from this pair of semistandard tableaux (bottom left of~\Cref{fig:thmstanley}).

We need to do slightly more work to identify the right-hand side of \Cref{eq:stanley}.  For each term from the right-hand side, we again have a semistandard tableau (top right of \Cref{fig:thmstanley}), generally of nonrectangular shape. As before, we subtract $i$ from the $i$th row of the tableau to produce a plane partition.  Now observe that the partitions $\lambda$ occuring in the sum on the right-hand side of \Cref{eq:stanley} are exactly of the form required so that $\lambda$ and its rotation by $180^\circ$ may be placed together to form a rectangular partition of shape $(2a){\times} (2b)$.  The proof of \Cref{thm:stanley} is completed by noting that the filling of this rotation is specified by the self-complementarity condition (bottom right of~\Cref{fig:thmstanley}).


\subsection{A Bijection from a Rule for Structure Coefficients}\label{sec:a_bijection}
As sketched at the end of~\cite[Section 3]{stanley1986symmetries}, \Cref{thm:stanley} can be realized with a simple bijection.  Semistandard tableaux (unlike plane partitions) come with a theory of jeu-de-taquin. By a standard combinatorial version of the Littlewood-Richardson rule (see \cite[Chapter 5]{fulton1997young}), placing the initial pair of semistandard tableaux ``kitty-corner'' from each other and applying jeu-de-taquin until arriving at a north-west-justified (``straight'') shape gives a bijection from the pairs of tableaux representing the left-hand side of~\Cref{eq:stanley} to the semistandard tableaux representing the terms of the right-hand side.

\begin{figure}[htbp]
\[ \scalebox{0.8}{\xymatrix{ 
\vcenter{\hbox{\protect\begin{ytableau}
\none[\bullet] & \none[\bullet]  & \none[\bullet]  & \none[\bullet]  & 1 & 1 & 2 & 3 \\
\none[\bullet] & \none[\bullet]  & \none[\bullet]  & \none[\bullet]  & 2 & 4 & 5 & 5 \\
\none[\bullet] & \none[\bullet]  & \none[\bullet]  & \none[\bullet] & 6 & 6 & 6 & 6 \\
1 & 1 & 1 & 2 & \none & \none & \none & \none \\
3 & 3 & 4 & 5 & \none & \none & \none & \none \\
4 & 5 & 6 & 6 & \none & \none & \none & \none
\end{ytableau}}}\ar[rr]^-{\text{jeu-de-taquin}}
& &
\vcenter{\hbox{\protect\begin{ytableau}
1 & 1  &  1 & 1  & 1 & 2 & 3 & 5 \\
2 & 2 & 4  & 4  & 5 & 6 & 6 & 6 \\
3 & 3  & 5  & 6 & 6 & \none & \none & \none \\
4 & 5 & 6 & \none  & \none & \none & \none & \none 
\end{ytableau}}} \ar[d]^{\text{subtract }i\text{ from }i\text{th row}}_{\substack{\text{complete to}\\\text{self-complementary}}}\\
\vcenter{\hbox{\protect \resizebox{!}{1in}{
\begin{tikzpicture}
	\planepartition{{3,3,2,1},{3,2,1,1},{1}}
\end{tikzpicture}}}}\hspace{1ex}
\vcenter{\hbox{\protect \resizebox{!}{1in}{
\begin{tikzpicture}
	\planepartition{{3,3,3,3},{3,3,2},{2,1}}
\end{tikzpicture}}}}
 \ar[rr]_-{\text{\Cref{thm:stanley}}} \ar[u]^{\text{add }i\text{ to }i\text{th row}}_{\substack{\text{place tableaux}\\\text{kitty-corner}}} & & \vcenter{\hbox{\protect \resizebox{!}{1in}{\begin{tikzpicture}
	\planepartition{{6,6,6,6,6,5,4,2},{6,6,4,4,3,2,2,2},{6,6,4,3,3,2,1},{6,5,4,3,3,2},{4,4,4,3,2,2},{4,2,1}}
\end{tikzpicture}}}}} }\]
\caption{An illustration of the bijective proof of~\Cref{thm:stanley} between pairs of plane partitions in a $3{\times}4{\times}3$ box and self-complementary plane partitions in a $6{\times}8{\times}6$ box.}
\label{fig:thmstanley}
\end{figure}

\subsection{Outline of the Paper}
In summary, our philosophy is that a multiplicity-free identity in a ring with a combinatorial rule for structure coefficients is \emph{equivalent} to a bijection.  In this paper, we apply this philosophy using the objects and tools of minuscule \K-theoretic Schubert calculus.  To obtain the bijections of~\Cref{thm:main_thm1}, we therefore need:
\begin{itemize}
    \item combinatorial objects (\Cref{sec:minuscule});
    \item rings with bases indexed by those objects (\Cref{sec:ring});
    \item combinatorial rules to compute structure coefficients in those rings (\Cref{sec:structure_coeffs}); which lead to
	\item bijections (\Cref{thm:gen_main_thm}), and their equivalent
    \item multiplicity-free identities (\Cref{thm:gen_main_thm_Ring}); which allow us to
	\item identify interesting special cases (\Cref{sec:coincidental,sec:embedding,sec:applications}).
\end{itemize}

The remainder of the paper is structured as follows.  In~\Cref{sec:definitions}, we review required background and fix notation for posets, root systems, reflection groups, and flag varieties.  In~\Cref{sec:minuscule}, we describe minuscule (co)weights and their associated posets.    In \Cref{sec:ring}, we recall the basic notions of (cohomological and \K-theoretic) Schubert calculus, building to the powerful combinatorial toolkit of \Cref{sec:structure_coeffs}.

In~\Cref{sec:results_and_conjectures}, we state and prove our main theorem (\Cref{thm:gen_main_thm}), which provides a bijective framework for doppelg\"angers.  We then turn to the task of applying \Cref{thm:gen_main_thm} to prove \Cref{thm:main_thm1}.  We recall the coincidental types and their root posets in~\Cref{sec:coincidental}, and \Cref{sec:embedding} is then devoted to embedding the posets in the first two columns of~\Cref{fig:triples} inside ambient minuscule posets.  Finally, we specialize~\Cref{thm:gen_main_thm} in~\Cref{sec:applications} to conclude \Cref{thm:main_thm1}.

In \Cref{sec:future_work}, we outline some related open problems.

\section{Background}
\label{sec:definitions}

In this section, we fix notation and review background on posets, root systems, real reflection groups, and flag varieties.  We refer the reader to~\cite{hiller1982geometry,humphreys1992reflection,kane} for more comprehensive treatments; we follow the notation of~\cite{humphreys1992reflection}.

\subsection{Posets}
\label{sec:posets}


 
An \defn{order ideal} of a poset $\Po$ is a subset $\w \subseteq \Po$ such that $y \succ x$ and $y \in \w$ together imply $x \in \w$.  We call an order ideal $\w$ of $\Po$ a \defn{straight shape}, and the difference of two straight shapes $\w \subseteq \vv$ a \defn{skew shape} $\vv/\w$.  Note that $\vv$ is the special case of a skew shape for $\w=\emptyset$.   Similarly, an \defn{order filter} is a subset of $\Po$ whose complement is an order ideal, and we call an order filter an \defn{anti-straight shape}.   As subposets of $\Po$, order ideals and filters inherit a partial order.  We write $J(\Po)$ for the set of all order ideals of a poset $\Po$, noting that $J(\Po)$ is itself a poset under inclusion of order ideals.  Observe that there is a natural correspondence $J(\Po) \simeq \PP^{[1]}(\Po)$ and, more generally, \[J(\Po \times [\ell]) \simeq \PP^{[\ell]}(\Po).\] 

A \defn{lattice} $\mathcal{L}$ is a poset such that any two $\w,\uu \in \mathcal{L}$ have both a least upper bound $\w \vee \uu$ and a greatest lower bound $\w \wedge \uu$. An element $\vv$ in a lattice $\mathcal{L}$ is \defn{join-irreducible} if $\vv = \w \vee \uu$ implies $\vv \in \{\w,\uu\}$.  Associated to a lattice $\mathcal{L}$ is its subposet $\mathcal{L}_{\rm ji}$ of join-irreducibles under the restiction of the partial order on $\mathcal{L}$.
A lattice $\mathcal{L}$ is a \defn{distributive lattice} if the binary operations $\vee$ and $\wedge$ distribute over each other.  The \defn{fundamental theorem of distributive lattices} states that for any finite distributive lattice $\mathcal{L}$, $J(\mathcal{L}_{\rm ji}) \cong \mathcal{L}$~\cite{birkhoff1937}.

\begin{remark}
	Recall from~\Cref{sec:a_ring} that an integer partition defines a Ferrers shape.  Similarly, a strict integer partition $\lambda=(\lambda_1 > \lambda_2 > \cdots > \lambda_k >0)$ defines a \defn{shifted Ferrers shape} by indenting the $i$th row $i$ steps to the right.   When working with Ferrers and shifted Ferrers shapes, we often find it convenient to switch to the English convention on tableau orientation. That is, we vertically reflect and then rotate our tableaux $135^\circ$ clockwise so ``gravity'' now points north-west.  \ytableausetup{boxsize=.5em} We differentiate shifted partitions from partitions using a subscripted ``$*$''---thus $(3,2,1)$ stands for the Ferrers shape $\raisebox{\height}{\ydiagram{3,2,1}}$, while $(3,2,1)_*$ is the shifted Ferrers shape $\raisebox{\height}{\ydiagram{3,1+2,2+1}}$.\ytableausetup{boxsize=normal}
	\label{rem:drawing}
\end{remark}





\subsection{Root Systems}\label{sec:rootsystems}
Let $V$ be a real Euclidean space of rank $n$ equipped with a nondegenerate symmetric inner product $\langle \cdot,\cdot\rangle$.  Let $\Phi \subset V$ be an irreducible (finite) root system; we do not assume $\Phi$ is crystallographic. Fix a choice of positive roots $\Phi^+$ and of simple roots $\Delta:=\{\alpha_1,\alpha_2,\ldots,\alpha_n\}$.  When we wish to differentiate types, we will write $\Phi=\Phi(X_n)$ for the root system of type $X_n$ (and similarly for other objects).  For a root $\alpha$, let $\alpha^\vee := 2 \frac{\alpha}{\langle \alpha,\alpha \rangle}$ be the corresponding \defn{coroot} and let $\Phi^\vee=\{\alpha^\vee:\alpha \in \Phi\}$ be the \defn{dual root system}.

The root system $\Phi$ is \defn{crystallographic} if $\langle \alpha, \beta^\vee \rangle \in \mathbb{Z}$ for all $\alpha,\beta \in \Phi$.  For $\Phi$ crystallographic, we let $Q:=\ZZ\Phi$ be the \defn{root lattice}, $Q^\vee:=\ZZ\Phi^\vee$ the \defn{coroot lattice}, and define the \defn{weight lattice} (whose elements are \defn{weights}) 
\[\Lambda:= \big\{\omega \in V : \langle \omega,\alpha^\vee \rangle \in \mathbb{Z} \text{ for all } \alpha \in \Delta\big\}
\]
and the \defn{coweight lattice} (whose elements are \defn{coweights})
 \[
 \Lambda^\vee:= \big\{\omega \in V : \langle \omega,\alpha \rangle \in \mathbb{Z} \text{ for all } \alpha \in \Delta\big\}.
 \]  As abelian groups, $\Lambda$ contains $Q$ as a subgroup and the finite number $f:=\left|\Lambda/Q\right|$ is called the \defn{index of connection}.    For each simple root $\alpha_i \in \Delta$, there is a corresponding \defn{fundamental weight} $\omega_i \in \Lambda$ defined by $(\omega_i, \alpha_j^\vee) = \delta_{ij}$, where $\delta_{ij}$ is the Kronecker delta.  The \defn{fundamental coweights} are defined analogously.   A weight $\omega$ is \defn{dominant} if $\langle \omega, \alpha_i^\vee\rangle \geq 0$ for all $\alpha_i\in\Delta$.  Each fundamental weight is dominant, and a weight is dominant exactly if it is a nonnegative linear combination of fundamental weights.

The \defn{dominance order} is the partial order on $\Lambda$ given by \begin{equation}\lambda \prec \omega \text{ if and only if } \omega-\lambda \text{ is a nonnegative sum of simple roots}.\label{eq:root_poset}\end{equation}    The \defn{positive root poset} is the restriction of dominance order to the positive roots $\Phi^+$.  The \defn{height} of a positive root $\alpha = \sum_{i=1}^n a_i \alpha_i$ is the postive integer $\hgt(\alpha)=\sum_{i=1}^n a_i$.  Abusing notation, we write $\Phi^+$ for the positive root poset, and we note that $\Phi^+$ has a unique maximal element $\widetilde{\alpha}$ called the \defn{highest root}.


\subsection{Reflection Groups}

\subsubsection{Classification}
For $\alpha \in \Phi^+$, we define the reflection $s_\alpha : V \to V$ by \[s_\alpha(v):=v-\langle v,\alpha\rangle \alpha^\vee\] for $v \in V$.  The \defn{(real) reflection group} of $\Phi$ is the group $W = \langle s_\alpha : \alpha \in \Phi^+ \rangle \subset \mathsf{O}(V)$ generated by these reflections.   Using our choice of simple roots $\Delta=\{\alpha_i\}_{i=1}^n$, $W$ is generated by the \defn{simple reflections} $S=\{s_{\alpha_i} : \alpha_i\in \Delta\}$.  We abbreviate $s_i:=s_{\alpha_i}$.

The simple reflections give $W$ a distinguished Coxeter presentation \[W = \big\langle s_1,s_2,\ldots,s_n : s_i^2=(s_is_j)^{m_{ij}}=e \big\rangle,\] where $e$ is the identity element of $W$ and $m_{ij}=m_{ji}$ are certain positive integers.  The pair $(W,S)$ of the finite reflection group and a choice of simple reflections is a finite \defn{Coxeter system}.  The relations $(s_is_j)^2=e$ are called \defn{commutation relations}, while higher-order relations $(s_is_j)^{m_{ij}}$ for $m_{ij}>2$ are called \defn{braid relations}.   The Coxeter presentation of $W$ is encoded by its \defn{Coxeter-Dynkin diagram}, the edge-labeled complete graph with vertex set $S$, where the edge from $s_i$ to $s_j$ is labeled by $m_{ij}$.  By a standard convention, we omit the edges with label $2$ (corresponding to commutations of simple reflections) and omit labels equal to $3$.  We also use a double edge to denote the label $4$ and a triple edge for the label $6$, with an arrow on each such edge pointing toward the shorter simple root.   \Cref{fig:min_classification} displays the Coxeter-Dynkin diagrams for the finite irreducible reflection groups (for now, ignore the vertices colored black).

With some low-dimensional redundancy, finite irreducible real reflection groups are classified as the crystallographic types $A_n, B_n, D_n, E_6, E_7, E_8, F_4,$ and $G_2$ (which come from crystallographic root systems), and the noncrystallographic types $H_3, H_4,$ and $I_2(m)$ (for $m \neq 1,2,3,4,6$).  We shall refer to these symbols as \defn{Coxeter-Cartan types}.

 Each finite crystallographic real reflection group $W$ has an affine extension $\widetilde{W}=W \ltimes Q^\vee$ obtained by adding a new ``affine'' simple reflection $s_0$ across an affine hyperplane perpendicular to $\widetilde{\alpha}$.
That is, \[\widetilde{W} = \big\langle s_0, s_1, \dots, s_n \big \rangle,\] where $s_0 : V \to V$ is defined by 
\[
s_0(v) := v - \big( \langle v, \widetilde{\alpha} \rangle - 1\big) \widetilde{\alpha}^\vee.
\]  The affine roots correspond to the vertices colored black in~\Cref{fig:min_classification}.

\begin{figure}[htbp]
\begin{center}
\begin{tabular}{cccc}
$\substack{\text{Coxeter-Cartan type} \\ \text{and index}}$ & $\GP$ & Coxeter-Dynkin diagram \\
$(A_1,1)$ & $\Gr(1,2)$ &
  $\vcenter{\hbox{\protect\begin{tikzpicture}[scale=.3]
    \draw[thick] (0 cm,0) -- (2 cm,0)  node [midway, above, sloped] (TextNode) {\tiny $\infty$} ;
    \draw[thick,solid,fill=gray] (0cm,0) circle (.6cm) node[white]  {$1$};
    \draw[thick,solid,fill=black] (2cm,0) circle (.6cm);
  \end{tikzpicture}}}$ \\
 $(A_n,k)$ & $\Gr(k,n+1)$ &
  $\vcenter{\hbox{\protect\begin{tikzpicture}[scale=.3]
    \draw[thick] (0 cm,0) -- (4 cm,0);
    \draw[dotted,thick] (4 cm,0) -- (8 cm,0);
    \draw[thick] (8 cm,0) -- (10 cm,0);
    \draw[thick] (0 cm,0) -- (5 cm,3);
    \draw[thick] (5 cm,3) -- (10 cm,0);
    \draw[thick,solid,fill=gray] (0cm,0) circle (.6cm) node[white]  {$1$};
    \draw[thick,solid,fill=gray] (2cm,0) circle (.6cm) node[white]  {$2$};
    \draw[thick,solid,fill=gray] (4cm,0) circle (.6cm);
    \draw[thick,solid,fill=gray] (8cm,0) circle (.6cm);
    \draw[thick,solid,fill=gray] (10cm,0) circle (.6cm) node[white]  {$n$};
    \draw[thick,solid,fill=black] (5cm,3) circle (.6cm);
  \end{tikzpicture}}}$ 
  \\
  $(B_n,n)$ &  $\mathbb{P}^{2n-1}$ &
 $\vcenter{\hbox{\protect \begin{tikzpicture}[scale=.3]
    \draw[thick] (0 cm,-.2) -- (2 cm,-.2);
    \draw[thick] (0 cm,.2) -- (2 cm,.2);
    \draw[thin] (1.3 cm,.6) -- (.8 cm,0);
    \draw[thin] (1.3 cm,-.6) -- (.8 cm,0);
    \draw[thick] (2 cm,0) -- (4 cm,0);
    \draw[dotted,thick] (4 cm,0) -- (8 cm,0);
    \draw[thick] (8 cm,0) -- (10 cm,1);
    \draw[thick] (8 cm,0) -- (10 cm,-1);
    \draw[thick,solid,fill=white] (0cm,0) circle (.6cm) node  {$1$};
    \draw[thick,solid,fill=white] (2cm,0) circle (.6cm) node  {$2$};
    \draw[thick,solid,fill=white] (4cm,0) circle (.6cm);
    \draw[thick,solid,fill=white] (8cm,0) circle (.6cm);
    \draw[thick,solid,fill=gray] (10cm,1) circle (.6cm) node[white]  {$n$};
    \draw[thick,solid,fill=black] (10cm,-1) circle (.6cm);
  \end{tikzpicture}}}$
  \\
$(C_n,1)$ & $\LG(n,2n)$ &
  $\vcenter{\hbox{\protect\begin{tikzpicture}[scale=.3]
    \draw[thick] (0 cm,-.2) -- (2 cm,-.2);
    \draw[thick] (0 cm,.2) -- (2 cm,.2);
    \draw[thin] (.7 cm,.6) -- (1.2 cm,0);
    \draw[thin] (.7 cm,-.6) -- (1.2 cm,0);
    \draw[thick] (2 cm,0) -- (4 cm,0);
    \draw[dotted,thick] (4 cm,0) -- (8 cm,0);
    \draw[thick] (8 cm,-.2) -- (10 cm,-.2);
    \draw[thick] (8 cm,.2) -- (10 cm,.2);
    \draw[thin] (9.3 cm,.6) -- (8.8 cm,0);
    \draw[thin] (9.3 cm,-.6) -- (8.8 cm,0);
    \draw[thick,solid,fill=gray] (0cm,0) circle (.6cm) node[white]  {$1$};
    \draw[thick,solid,fill=white] (2cm,0) circle (.6cm) node[white]  {$2$};
    \draw[thick,solid,fill=white] (4cm,0) circle (.6cm);
    \draw[thick,solid,fill=white] (8cm,0) circle (.6cm) node  {$n$};
    \draw[thick,solid,fill=black] (10cm,0) circle (.6cm);
  \end{tikzpicture}}}$ 
  \\
  $\substack{(D_n,1) \\ (D_n,2)}$ & $\OG(n,2n)$ &
  $\vcenter{\hbox{\protect\begin{tikzpicture}[scale=.3]
    \draw[thick] (0 cm,-1) -- (2 cm,0);
    \draw[thick] (0 cm,1) -- (2 cm,0);
    \draw[thick] (2 cm,0) -- (4 cm,0);
    \draw[dotted,thick] (4 cm,0) -- (8 cm,0);
    \draw[thick] (8 cm,0) -- (10 cm,1);
    \draw[thick] (8 cm,0) -- (10 cm,-1);
    \draw[thick,solid,fill=gray] (0cm,-1) circle (.6cm) node[white]  {$1$};
    \draw[thick,solid,fill=gray] (0cm,1) circle (.6cm) node[white]  {$2$};
    \draw[thick,solid,fill=white] (2cm,0) circle (.6cm) node  {$3$};
    \draw[thick,solid,fill=white] (4cm,0) circle (.6cm);
    \draw[thick,solid,fill=white] (8cm,0) circle (.6cm);
    \draw[thick,solid,fill=white] (10cm,1) circle (.6cm) node[black]  {$n$};
    \draw[thick,solid,fill=black] (10cm,-1) circle (.6cm);
  \end{tikzpicture}}}$ \\
   $(D_n,n)$ & $\QQ^{2n-2}$ &
  $\vcenter{\hbox{\protect\begin{tikzpicture}[scale=.3]
    \draw[thick] (0 cm,-1) -- (2 cm,0);
    \draw[thick] (0 cm,1) -- (2 cm,0);
    \draw[thick] (2 cm,0) -- (4 cm,0);
    \draw[dotted,thick] (4 cm,0) -- (8 cm,0);
    \draw[thick] (8 cm,0) -- (10 cm,1);
    \draw[thick] (8 cm,0) -- (10 cm,-1);
    \draw[thick,solid,fill=white] (0cm,-1) circle (.6cm) node[black]  {$1$};
    \draw[thick,solid,fill=white] (0cm,1) circle (.6cm) node[black]  {$2$};
    \draw[thick,solid,fill=white] (2cm,0) circle (.6cm) node  {$3$};
    \draw[thick,solid,fill=white] (4cm,0) circle (.6cm);
    \draw[thick,solid,fill=white] (8cm,0) circle (.6cm);
    \draw[thick,solid,fill=gray] (10cm,1) circle (.6cm) node[white]  {$n$};
    \draw[thick,solid,fill=black] (10cm,-1) circle (.6cm);
  \end{tikzpicture}}}$ \\
   $\substack{(E_61) \\ (E_6,6)}$ & $\mathbb{OP}^2$ &
 $\vcenter{\hbox{\protect \begin{tikzpicture}[scale=.3]
    \draw[thick] (0 cm,0) -- (8 cm,0);
    \draw[thick] (4 cm,0) -- (4 cm,4);
    \draw[thick,solid,fill=gray] (0cm,0) circle (.6cm) node[white]  {$1$};
    \draw[thick,solid,fill=white] (2cm,0) circle (.6cm) node  {$3$};
    \draw[thick,solid,fill=white] (4cm,0) circle (.6cm) node  {$4$};
    \draw[thick,solid,fill=white] (6cm,0) circle (.6cm) node  {$5$};
    \draw[thick,solid,fill=gray] (8cm,0) circle (.6cm) node[white]  {$6$};
    \draw[thick,solid,fill=white] (4cm,2) circle (.6cm) node  {$2$};
    \draw[thick,solid,fill=black] (4cm,4) circle (.6cm);
  \end{tikzpicture}}}$
  \\
  $(E_7,1)$ & $G_\omega(\mathbb{O}^3,\mathbb{O}^6)$ &
   $\vcenter{\hbox{\protect \begin{tikzpicture}[scale=.3]
    \draw[thick] (0 cm,0) -- (12 cm,0);
    \draw[thick] (6 cm,0) -- (6 cm,2);
    \draw[thick,solid,fill=gray] (0cm,0) circle (.6cm) node[white]  {$1$};
    \draw[thick,solid,fill=white] (2cm,0) circle (.6cm) node  {$3$};
    \draw[thick,solid,fill=white] (4cm,0) circle (.6cm) node  {$4$};
    \draw[thick,solid,fill=white] (6cm,0) circle (.6cm) node  {$5$};
    \draw[thick,solid,fill=white] (8cm,0) circle (.6cm) node  {$6$};
    \draw[thick,solid,fill=white] (10cm,0) circle (.6cm) node  {$7$};
    \draw[thick,solid,fill=black] (12cm,0) circle (.6cm);
    \draw[thick,solid,fill=white] (6cm,2) circle (.6cm) node  {$2$};
  \end{tikzpicture}}}$
  \\
  $(H_3,3)$ & \text{none} & 
    $\vcenter{\hbox{\protect\begin{tikzpicture}[scale=.3]
    \draw[thick] (2 cm,0) -- (4 cm,0);
    \draw[thick] (0 cm,0) -- (2 cm,0) node [midway, above, sloped] (TextNode) {\tiny $5$} ;
    \draw[thick,solid,fill=white] (0cm,0) circle (.6cm) node  {$1$};
    \draw[thick,solid,fill=white] (2cm,0) circle (.6cm) node  {$2$};
    \draw[thick,solid,fill=gray] (4cm,0) circle (.6cm) node[white]  {$3$};
  \end{tikzpicture}}}$
  \\
  $\substack{(I_2(m),1) \\ (I_2(m),2)}$ & \text{none} &
  $\vcenter{\hbox{\protect\begin{tikzpicture}[scale=.3]
    \draw[thick] (0 cm,0) -- (2 cm,0)  node [midway, above, sloped] (TextNode) {\tiny $m$} ;
    \draw[thick,solid,fill=gray] (0cm,0) circle (.6cm) node[white]  {$1$};
    \draw[thick,solid,fill=gray] (2cm,0) circle (.6cm) node[white]  {$2$};
  \end{tikzpicture}}}$
  \end{tabular}
  
\end{center}
\caption{In crystallographic type, the roots $\alpha_i$ marked in gray have a corresponding cominuscule fundamental weight $\omega_i$; the affine simple root is marked in black.  For $H_3$ and $I_2(m)$, the roots marked in gray correspond to maximal parabolic quotients $W^{\langle i \rangle}$ whose longest element is fully commutative.} 
\label{fig:min_classification}
\end{figure}

\subsubsection{Weak Order}
For $w \in W$, we write $\len(w)$ for the \defn{length} of $w$, i.e. the least $\well$ such that $w$ can be written as $w=s_{i_1}s_{i_2}\cdots s_{i_\well}$ for some sequence of simple reflections and define \[\Red(w):=\{(s_{i_1},s_{i_2},\ldots, s_{i_\well}) : w=s_{i_1}s_{i_2}\cdots s_{i_\well} \text{ and } \well = \len(w)  \}\] for its set of \defn{reduced words}.  There is the structure of a graph on $\Red(w)$ by drawing edges between two reduced words when they differ by a commutation or braid relation; by Matsumoto's theorem, this graph is connected.   

We define the \defn{inversion set} of $w \in W$ by  \[\w:=\left(-w(\Phi^+) \right)\cap \Phi^+=\big\{\alpha_{i_1},s_{i_1}(\alpha_{i_2}),\ldots,s_{i_1}s_{i_2}\cdots s_{i_{\well-1}}(\alpha_{i_\well})\big\},\] where $s_{i_1}s_{i_2}\cdots s_{i_\well}$ is any reduced word for $w$.  Clearly, one has $|\w| = \len(w)$ for all $w \in W$.  We will often think of the inversion set of $w$ as a subposet of the positive root poset.

The \defn{Demazure product} on $W$ is defined by setting \[s_i \bullet w := \begin{cases} s_i w & \text{if } \len(s_i w)>\len(w) \\ w & \text{otherwise} \end{cases}\] and then extending to products of arbitrary elements. (The Demazure product is well-defined and associative---it corresponds to the product in the $0$-Hecke algebra, and yields a monoid structure on $W$.)

The \defn{weak order} is the partial order on $W$ defined by $w \leq v$ if and only if $\w \subseteq \vv$.  The group $W$ has a unique longest element $\wo$, which is maximal in the weak order.

\subsubsection{Parabolic Subgroups and Quotients}
A \defn{parabolic subgroup} $W_J \subset W$ is a group generated by a subset $J \subset S$ of the simple reflections, with (possibly reducible) subroot system $\Phi_J \subseteq \Phi$.  A \defn{maximal parabolic subgroup} is one for which $J=S\setminus \{s_i\}$---we shall denote such a subgroup by $W_{\langle i \rangle}$ and its root system by $\Phi_{\langle i \rangle}$.  The set $W^J:=W/W_J$ is called a \defn{parabolic quotient}; we identify $W^J$ with its minimal-length coset representatives and write $W^{\langle i \rangle} :=W/W_{\langle i \rangle}$.

Any $w\in W$ can be written as $w=w_Jw^J$ with $w_J \in W_J$ and $w^J \in W^J$.  We write $\wo(J)$ for the unique longest element of $W_J$.  Each parabolic quotient $W^J$ also has a longest element $\wo^J=\wo \wo(J)$, and $W^J$ consists of the elements in the closed interval $[e,\wo^J]$ of $W$.  As an interval in the weak order, the parabolic quotient $W^J$ therefore inherits the partial ordering from $W$.  The map \begin{align*} 
\check{\cdot} : W^J &\to W^J \\ w &\mapsto \check{w} = \wo w \wo(J)
\end{align*}
 is an antiautomorphism of the poset $W^J$.

\subsection{Flag Varieties}\label{sec:flag_varieties}
Let ${\sf G}$ be a semisimple complex Lie group. Inside ${\sf G}$, fix a Borel subgroup ${\sf B}$, an opposite Borel subgroup ${\sf B}_-$ and a maximal torus ${\sf T}:={\sf B} \cap {\sf B}_-$. For example, if ${\sf G} = {\sf SL}(n)$, we may choose ${\sf B}$ to be the subgroup of upper triangular matrices in ${\sf SL}(n)$ and ${\sf B}_-$ to be the lower triangular matrices in ${\sf SL}(n)$, whence ${\sf T}:={\sf B} \cap {\sf B}_-$ would be uniquely determined as the diagonal matrices in ${\sf SL}(n)$.

One recovers the data of~\Cref{sec:rootsystems} in the following way.  The \defn{Weyl group} of ${\sf G}$ is $W:={\sf N}({\sf T})/{\sf T}$, where ${\sf N}({\sf T})$ is the normalizer of ${\sf T}$ in ${\sf G}$.  Consider the complex Lie algebra $\mathfrak{g}$ of ${\sf G}$. The \defn{Cartan subalgebra} $\mathfrak{t}$ is the Lie algebra of the maximal torus ${\sf T}$. The adjoint action of $\mathfrak{t}$ on $\mathfrak{g}$ can be simultaneously diagonalized, so $\mathfrak{g}$ decomposes into a direct sum of weight spaces as
\[\mathfrak{g} = \bigoplus_{\omega \in \mathfrak{t}^*} \mathfrak{g}_\omega,\] where the direct sum is over linear functionals on $\mathfrak{t}$. In fact, $\mathfrak{g}_0 = \mathfrak{t}$, and this decomposition can be rewritten as
 \[\mathfrak{g}=\mathfrak{t} \oplus \bigoplus_{\alpha \in \Phi} \mathfrak{g}_\alpha,\] where $\Phi$ is a finite set of nonzero vectors in the dual space $\mathfrak{t}^*$ and each $\mathfrak{g}_\alpha$ is a one-dimensional subspace of $\mathfrak{g}$.  It turns out that $\Phi$ is a crystallographic root system. Let $\mathfrak{b}$ and $\mathfrak{b}_-$ be the Lie algebras of the Borel subgroup ${\sf B}$ and opposite Borel subgroup ${\sf B}_-$, respectively. For each $\alpha \in \Phi$, either $\mathfrak{g}_\alpha \subseteq \mathfrak{b}$ or $\mathfrak{g}_\alpha \subseteq \mathfrak{b}_-$. We obtain a choice of positive roots $\Phi^+$ from our choice of Borel subgroup by
 \[
 \Phi^+ = \left\{ \alpha \in \Phi : \mathfrak{g}_\alpha \subseteq \mathfrak{b} \right\}.\] The decomposition of $\Phi$ into positive and negative roots uniquely determines a set of simple roots $\Delta$.

Moreover, the Lie group ${\sf G}$ decomposes as the disjoint union (the \defn{Bruhat decomposition}) \[{\sf G} = \bigsqcup_{w \in W} {\sf B}_- w {\sf B}.\]

More generally, for ${\sf P} \supseteq {\sf B}$ a parabolic subgroup of ${\sf G}$, we write $W^{\sf P}=W/W_{\sf P}$ for the corresponding parabolic quotient and subgroup of $W$.  Then \[{\sf G} = \bigsqcup_{w \in W^{\sf P}} {\sf B}_- w {\sf P}\] and the \defn{generalized flag variety} $\GP$ has the \defn{Schubert cell decomposition} \begin{equation}\GP = \bigsqcup_{w \in W^{\sf P}} {\sf B}_- w {\sf P} / {\sf P}.\label{eq:bruhatdecomposition}\end{equation}%
With these conventions, the Schubert cell  ${\sf B}_- w {\sf P} / {\sf P}$ has codimension $\len(w)$.


\section{Minuscule Weights and Posets}
\label{sec:minuscule}





Let $\Phi$ be a crystallographic root system and $\Lambda$ its weight lattice. In this section, we recall the definition of a minuscule weight $\omega \in \Lambda$.  For each such minuscule weight $\omega$, we then define the minuscule poset associated to $\omega$ as a subset of the poset $\Phi^+$ of positive roots. After recalling some combinatorics associated to minuscule posets in~\Cref{sec:ring,sec:structure_coeffs}, we will use these posets to formulate our bijective framework for doppelg\"angers in~\Cref{sec:results_and_conjectures}.   For full background, we refer the reader to Bourbaki~\cite[Chapitre VIII, \S 7.3]{bourbaki1975}, H.~Hiller's text~\cite[Chapter V, \S 2]{hiller1982geometry}, and R.~Green's recent book~\cite{green2013combinatorics}.  The combinatorial aspects of minuscule weights that we recall here have been thoroughly studied, exploited, and extended by R.~Stanley, R.~Proctor, and J.~Stembridge~\cite{stanley1980weyl,proctor1984bruhat,stembridge1996fully,stembridge1998partial,stembridge2001minuscule}, among others.  
For discussion of the geometric aspects, see the book of S.~Billey and V.~Lakshmibai \cite{billey2000book}.

\subsection{Minuscule Weights}

\begin{definition}  A nonzero dominant weight $\omega \in \Lambda$ is called \defn{minuscule} if $\langle \omega,\alpha^\vee\rangle \in\{-1,0,1\}$ for $\alpha \in \Phi$.
\label{def:minuscule}
\end{definition}

A minuscule weight for the dual root system $\Phi^\vee$ is called a \defn{minuscule coweight}; the corresponding weight of the original root system is called \defn{cominuscule}.  The vertices of Dynkin diagrams whose corresponding coweight is minuscule are marked in gray in~\Cref{fig:min_classification}.   


\begin{theorem}
For $\omega$ a dominant coweight in crystallographic Coxeter-Cartan type, the following are equivalent:
\begin{enumerate}
  \item $\omega$ is minuscule---that is, $\omega \neq 0$ and $\langle \omega,\alpha\rangle \in\{-1,0,1\}$ for all $\alpha \in \Phi$;
    \item $\omega=\omega_i$ is a fundamental coweight, and $c_i=1$ in the expansion $\widetilde{\alpha}=\sum_{j=1}^n c_j\alpha_j$ of the highest root in the simple root basis;
      \item $\omega=\omega_i$ is a fundamental coweight, and there is an automorphism of the affine Dynkin diagram sending $\alpha_0$ to $\alpha_i$; and
        \item $\omega$ is a nonzero minimal representative of $\Lambda^\vee / Q^\vee$ in the dominance order.
\end{enumerate}
\label{thm:equ_min}
\end{theorem}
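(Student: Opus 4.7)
I plan to establish the theorem via three biconditionals $(1) \Leftrightarrow (2)$, $(2) \Leftrightarrow (3)$, and $(2) \Leftrightarrow (4)$, with condition (2) serving as the computational hub. The central tool throughout is the pointwise bound on positive root expansions: if $\alpha = \sum b_j \alpha_j \in \Phi^+$ and $\widetilde{\alpha} = \sum c_j \alpha_j$ is the highest root, then $0 \leq b_j \leq c_j$ for every $j$. This follows from the fact that $\widetilde{\alpha}$ is the unique maximum of $\Phi^+$ under dominance order, so $\widetilde{\alpha} - \alpha$ is a nonnegative integer combination of simple roots for every $\alpha \in \Phi^+$.

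For $(1) \Rightarrow (2)$, I expand a dominant coweight as $\omega = \sum a_j \omega_j$ in fundamental coweights; dominance gives each $a_j \geq 0$ and $\langle \omega, \alpha_j \rangle = a_j$. The minuscule hypothesis forces $a_j \in \{0,1\}$, while pairing with $\widetilde{\alpha}$ yields $\sum c_j a_j \leq 1$. Since every $c_j \geq 1$ and $\omega \neq 0$, exactly one $a_i$ equals $1$ and necessarily $c_i = 1$. The converse $(2) \Rightarrow (1)$ follows from the pointwise bound: for $c_i = 1$ and any $\alpha = \sum b_j \alpha_j \in \Phi^+$, one has $\langle \omega_i, \alpha \rangle = b_i \in \{0,1\}$, and the symmetry $\alpha \mapsto -\alpha$ handles the negative roots.

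For $(2) \Leftrightarrow (3)$, I extend the Dynkin diagram by attaching an affine node $\alpha_0$ and assigning it mark $c_0 := 1$. Any affine Dynkin diagram automorphism preserves edge multiplicities and arrow orientations, and hence permutes the marks $c_i$; therefore any such automorphism sending $\alpha_0$ to $\alpha_i$ forces $c_i = 1$. The converse direction---that every node with mark $1$ lies in the orbit of $\alpha_0$---is the main obstacle, and is cleanly handled by identifying $\Lambda^\vee / Q^\vee$ with the subgroup of the extended affine Weyl group $W \ltimes \Lambda^\vee$ that stabilizes the fundamental alcove; this subgroup acts on the affine Dynkin diagram and realizes exactly the required orbit (a case-by-case verification from the classification of affine Dynkin diagrams is also straightforward). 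Finally, for $(2) \Leftrightarrow (4)$, one invokes the classical structure theorem identifying the nonzero minimal dominant representatives of cosets in $\Lambda^\vee / Q^\vee$ under dominance order with the set $\{\omega_i : c_i = 1\}$; this is the standard characterization of minuscule coweights and, combined with $(1) \Leftrightarrow (2)$ already established, yields the desired equivalence.
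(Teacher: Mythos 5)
The paper states \Cref{thm:equ_min} as classical background and does \emph{not} provide a proof; the preceding paragraph defers to Bourbaki, Hiller, and Green for this material. So there is no proof in the paper to compare against, and the question is only whether your outline is a correct rendition of the standard argument. It essentially is. The $(1)\Leftrightarrow(2)$ argument via the pointwise bound $0\le b_j\le c_j$ on positive-root coefficients is complete and correct (with the small observation, which you use implicitly, that a nonzero dominant $\omega$ must pair strictly positively with $\widetilde{\alpha}$, forcing equality in $\sum c_j a_j\le 1$). For $(2)\Leftrightarrow(3)$, identifying $\Lambda^\vee/Q^\vee$ with the stabilizer $\Omega$ of the fundamental alcove inside $W\ltimes\Lambda^\vee$ is the right mechanism; the step you gloss over is that $\Omega$ acts not just on special nodes but \emph{transitively} on them, which follows from the count $|\Omega|=f=\#\{i: c_i=1, i\ge 0\}$ together with freeness of the action, and is what genuinely requires an argument (or the classification). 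Worth being a bit more explicit there.

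The one place your outline is weaker than it looks is $(2)\Leftrightarrow(4)$: as written, you ``invoke the classical structure theorem identifying the nonzero minimal dominant representatives of cosets in $\Lambda^\vee/Q^\vee$ ... with the set $\{\omega_i : c_i = 1\}$'' --- but that \emph{is} the equivalence $(2)\Leftrightarrow(4)$, so this step is essentially a restatement rather than a proof. A genuine argument would route through what you have already built: for $(2)\Rightarrow(4)$, note that the $W$-orbit of a minuscule $\omega_i$ exhausts the coset $\omega_i+Q^\vee$ inside $\mathrm{conv}(W\omega_i)$, so any dominant $\lambda\in\omega_i+Q^\vee$ with $\lambda\preceq\omega_i$ lies in $W\omega_i$, and dominance then forces $\lambda=\omega_i$; for $(4)\Rightarrow(2)$, one can use the $\Omega$-correspondence from $(3)$ to produce, for each nonzero coset, a distinguished $\omega_i$ with $c_i=1$ and show it is the unique minimal dominant element. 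That closes the loop without circularity. With those two caveats, the approach is the standard one and is sound.
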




\subsection{Minuscule Posets}

The subgroup of the reflection group $W$ that stabilizes a fundamental weight $\omega_i$ is the maximal parabolic subgroup 
$W_{\langle i \rangle}$.
Let ${\sf P}$ be the corresponding maximal parabolic subgroup of ${\sf G}$.  The \defn{minuscule poset} for a minuscule coweight $\omega_i$  is the order filter in the root poset $\Phi^+$ generated by the corresponding simple root $\alpha_i$: \begin{equation}\Lambda_{{\sf G}/{{\sf P}}} := \left\{\alpha \in \Phi^+ : \text{ if } \alpha = \sum_{j=1}^n c_j \alpha_j, \text{ then } c_i \neq 0\right\} = \Phi^+ \setminus \Phi^+_{\langle i \rangle}.\label{eq:subposet}\end{equation}

By the orbit-stabilizer theorem, the minimal coset representatives $w$ of the parabolic quotient $W^{\langle i \rangle}:=W/W_{\langle i \rangle}$ are in bijection with the weights in the orbit $\{ w(\omega_i) : w \in W\}$ of $\omega_i$.  We now follow J.~Stembridge and R.~Proctor to give an explicit combinatorial description of these quotients in the case $\omega_i$ is a minuscule weight.

\medskip

 Fix $w \in W$ with $\len(w) = j$ and let $\mathbf{w} = (s_{k_1},s_{k_2}, \ldots, s_{k_\well})$ be a reduced word for $w$.  Define a partial order $\prec_{\mathbf{w}}$ on $[\well]$ by the transitive closure of the relations \begin{equation} i \prec_{\mathbf{w}} j \text{ if } i<j \text{ and } s_{k_i}s_{k_j} \neq s_{k_j} s_{k_i}.\label{eq:heap}\end{equation}  
This partial ordering defines an ordering on $[\well]$ called a \defn{heap}~\cite{viennot1986heaps,stembridge1996fully}, and hence gives an ordering of the roots in the inversion set $\w$ of $w$.

A \defn{fully commutative} element $w\in W$ is one whose graph $\Red(w)$ of reduced words is connected using only commutations (that is, without braid relations). For any two reduced words of a fully commutative $w$, it is then not difficult to see that the two induced partial orderings on $\w$ are isomorphic.  Therefore, when $w \in W$ is fully commutative, we may unambiguously refer to \textit{the} heap $\w$ of $w$.

\begin{figure}[htbp]
\begin{center}
  \includegraphics[height=1.5in]{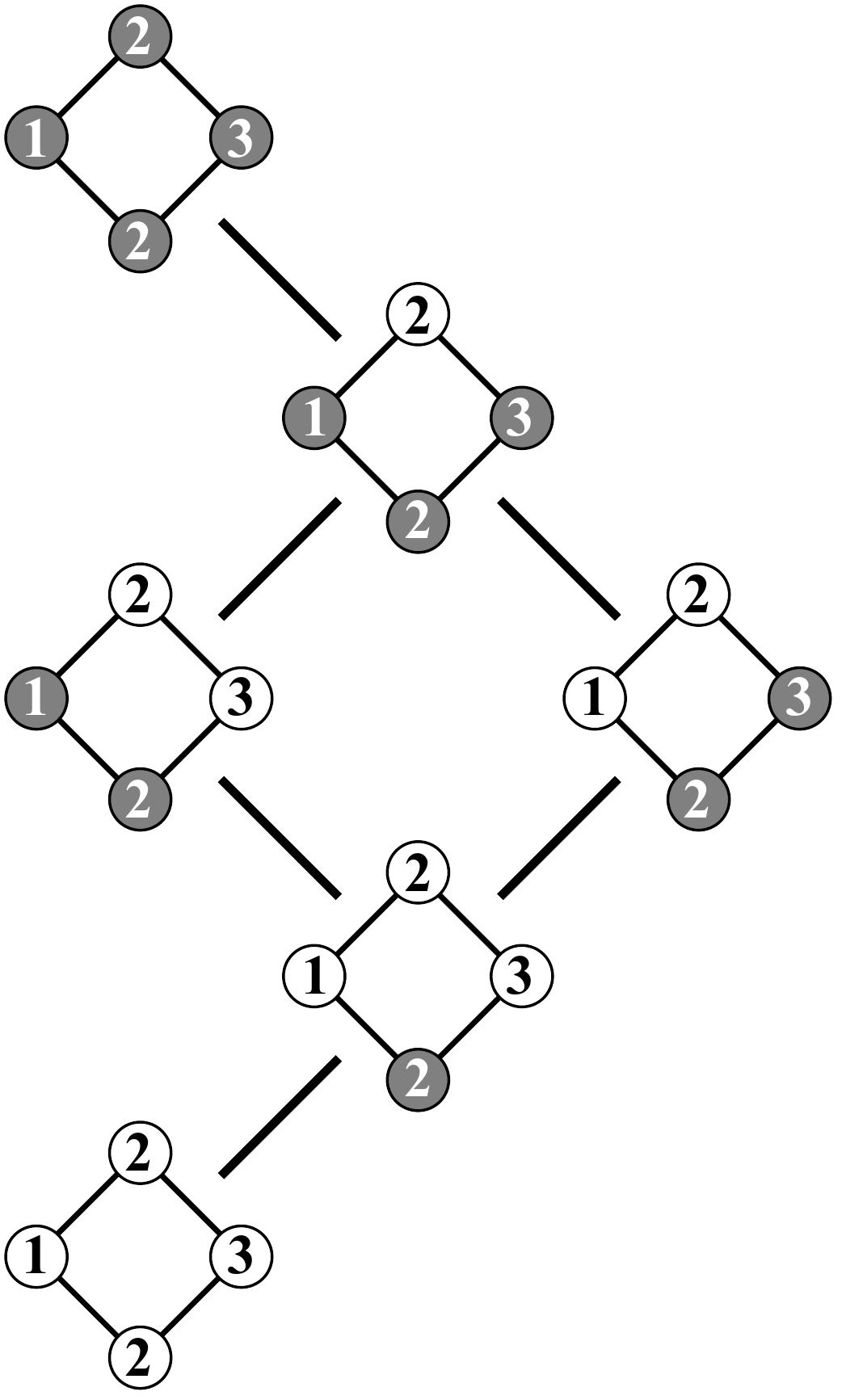}
 \end{center}
\caption{For $W=W(A_3)$, the minuscule weight $\omega_2$ is fixed by the parabolic subgroup $W_{\langle 2 \rangle}$.  The corresponding quotient $W^{\langle 2 \rangle}$ has a fully commutative longest element $\wo^{\langle 2 \rangle}=s_2 s_1 s_3 s_2$, whose heap is ${\sf w}_\circ^{\langle 2 \rangle}=[2] \times [2] \simeq \Lambda_{\Gr(2,4)}$.}
\label{ex:heap33}
\end{figure}

\begin{theorem}[{\cite[Proposition 2.2 and Lemma 3.1]{stembridge1996fully}}]
\label{thm:linandred}
For $w$ fully commutative, there is a bijection between standard tableaux of the heap of $w$ and reduced words for $w$: \[\SYT(\w) \simeq \Red(w).\]  This induces an order isomorphism between the distributive lattice of order ideals of the heap and the weak-order interval $[e,w]$: \[J(\w)\simeq [e,w].\]
\end{theorem}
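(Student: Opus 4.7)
The plan is to establish the set bijection $\SYT(\w) \simeq \Red(w)$ directly, and then to lift it to the order isomorphism $J(\w) \simeq [e,w]$ by identifying order ideals of the heap with initial segments of linear extensions. Fix the reduced word $\mathbf{w} = (s_{k_1}, \ldots, s_{k_\ell})$ used to define the heap structure on $[\ell]$, and define $\Psi : \SYT(\w) \to \Red(w)$ by sending a linear extension $T$ to the word $(s_{k_{T^{-1}(1)}}, \ldots, s_{k_{T^{-1}(\ell)}})$. The identity extension recovers $\mathbf{w}$, and any other extension is reachable from it by a sequence of transpositions of incomparable adjacent elements; by \eqref{eq:heap} each such transposition swaps commuting simple reflections, so it is a commutation move on the word, preserving both reducedness and the underlying element $w$.

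For surjectivity of $\Psi$, I would invoke full commutativity: by Matsumoto's theorem, any $\mathbf{v} \in \Red(w)$ is reached from $\mathbf{w}$ by commutation and braid moves, and full commutativity forbids braid moves; each commutation move lifts to a transposition of incomparable adjacent elements in a linear extension, so $\mathbf{v} = \Psi(T)$ for some $T$. For injectivity, observe that if $i \neq j$ satisfy $k_i = k_j$, then $i$ and $j$ must be comparable in the heap: otherwise every intermediate position would commute with $s_{k_i} = s_{k_j}$, allowing the two copies of $s_{k_i}$ to be brought adjacent by commutations, producing $s_{k_i}^2 = e$ in a word equivalent to $\mathbf{w}$ and contradicting $\len(w) = \ell$. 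Hence positions sharing a simple reflection form a chain in the heap, so within each such chain the linear extension is the unique order-preserving bijection onto its image; since $\Psi(T)$ records the image of each chain, it determines $T$ entirely.

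For the order isomorphism, I would send $I \in J(\w)$ to the element $u_I$ represented by the prefix $(s_{k_{T^{-1}(1)}}, \ldots, s_{k_{T^{-1}(|I|)}})$ of $\Psi(T)$ for any linear extension $T$ of $\w$ whose first $|I|$ values enumerate $I$; this is well-defined and gives a reduced word by the same commutation argument applied separately within $I$ and within its complement. The inversion set of $u_I$ is identified with $I$ as a subposet of $\w$, so $u_I \leq w$ in weak order. Conversely, any $u \leq w$ has $\uu \subseteq \w$, and any reduced word for $u$ can be extended to one for $w$, producing a linear extension of $\w$ with $\uu$ as initial segment; hence $\uu$ is an order ideal of $\w$ and $u = u_{\uu}$. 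Both maps are order-preserving by construction. The main obstacle is the surjectivity of $\Psi$, which genuinely requires full commutativity: without it, $\Red(w)$ decomposes into multiple braid-equivalence classes, each corresponding to a distinct heap, and no single poset can index all of them.
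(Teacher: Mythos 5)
Your proof is correct, and it is essentially the argument behind the cited result: the paper itself does not prove this statement but quotes it from Stembridge, whose proof likewise reads words off linear extensions of the heap, matches transpositions of incomparable adjacent elements with commutation moves, uses the fact that equal-label positions form a chain for injectivity, and identifies order ideals with prefixes of reduced words to get the weak-order isomorphism. The only step worth spelling out is in your surjectivity argument: when a commutation move swaps the letters in positions $m$ and $m+1$ of $\Psi(T)$, the two corresponding heap elements really are incomparable, since a direct relation is excluded because their generators commute, and a relation through the transitive closure would force an intermediate element whose $T$-value lies strictly between two consecutive integers; with that observation your induction over commutation moves is complete.
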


In~\cite{stembridge1996fully}, J.~Stembridge classified all maximal parabolic quotients whose longest element $\wo^{\langle i \rangle}$ is fully commutative.  This classification is summarized in~\Cref{fig:min_classification}.  When $W$ is a Weyl group, this classification essentially coincides with the classification of minuscule representations of the corresponding Lie algebra.\footnote{The Weyl group is not sensitive to the difference between long and short roots, and so confuses types $B$ and $C$.}  By~\Cref{thm:linandred} and J.~Stembridge's classification, when $\omega_i$ is minuscule the inversion sets of the elements in $W^{\langle i \rangle}$ are \emph{order ideals} in the heap for the longest element $\wo^{\langle i \rangle}$ of $W^{\langle i \rangle}$.  This heap may now be simply described by~\Cref{eq:subposet} as the order filter in $\Phi^+$ generated by $\alpha_i$.

This discussion is summarized by the theorem below.

\begin{theorem}[R.~Proctor~\cite{proctor1984bruhat}]
When $\omega$ is a minuscule coweight, there is an order isomorphism
\begin{align*}
    W^{\sf P} &\simeq J\left(\Lambda_{\GP}\right) \\
    u &\mapsto \uu,
\end{align*}
where $W_{\sf P}:=\left\{w \in W : w(\omega) = \omega\right\}$ and ${\sf P}$ is the corresponding maximal parabolic subgroup of ${\sf G}$.
\label{thm:labelsofels}
\end{theorem}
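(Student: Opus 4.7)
The plan is to deduce Proctor's theorem from Stembridge's classification of fully commutative parabolic quotients together with Theorem \ref{thm:linandred}. Since $\omega_i$ is minuscule, Stembridge's classification (Figure \ref{fig:min_classification}) ensures that $\wo^{\langle i \rangle}$ is fully commutative. Because the parabolic quotient $W^{\langle i \rangle}$, identified with its minimal-length coset representatives, coincides with the weak-order interval $[e, \wo^{\langle i \rangle}]$ (a standard fact, since weak order on $W^{\langle i \rangle}$ is inclusion of inversion sets), Theorem \ref{thm:linandred} directly produces an order isomorphism $W^{\langle i \rangle} \simeq J(\mathsf{w}_\circ^{\langle i \rangle})$ sending $u \mapsto \uu$.

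It then suffices to identify the heap $\mathsf{w}_\circ^{\langle i \rangle}$, as a poset, with the minuscule poset $\Lambda_{\GP}$ of equation (\ref{eq:subposet}). On the level of underlying sets, using the factorization $\wo^{\langle i \rangle} = \wo\, \wo(\langle i \rangle)$ together with the standard fact that $W_{\langle i \rangle}$ permutes $\Phi^+ \setminus \Phi^+_{\langle i \rangle}$, one verifies that the inversion set of $\wo^{\langle i \rangle}$ equals $\Phi^+ \setminus \Phi^+_{\langle i \rangle}$, which is the underlying set of $\Lambda_{\GP}$ by definition.

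The main obstacle is showing that the heap partial order (equation (\ref{eq:heap})) agrees with the restriction of the dominance order on $\Phi^+$. I would establish this by matching covering relations in both directions. If $j \lessdot j'$ is a cover in the heap, fully commutativity permits reordering the reduced word so that positions $j, j'$ become adjacent with non-commuting simple reflections; a short computation then shows the associated positive roots $\beta_j, \beta_{j'}$ differ by a simple root, so $\beta_j \lessdot \beta_{j'}$ in the root poset as well. For the converse direction, one exploits the minuscule hypothesis --- equivalently, every positive root has coefficient of $\alpha_i$ in $\{0,1\}$, a consequence of Theorem \ref{thm:equ_min}(2) applied to the highest root --- to rule out extraneous covering relations in the restriction of $\Phi^+$ to $\Lambda_{\GP}$, ensuring each cover $\beta \lessdot \beta + \alpha_k$ is witnessed by a corresponding heap cover via a choice of reduced word. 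This delicate root-theoretic converse, requiring one to build an appropriate reduced word realizing the simple-root-difference as adjacent non-commuting generators, is the most technical point of the argument; the remainder of the proof is bookkeeping gluing together Theorem \ref{thm:linandred} with these poset identifications.
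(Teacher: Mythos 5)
Your overall skeleton is the same as the paper's: the paper also derives the statement from Stembridge's \Cref{thm:linandred} plus his classification of fully commutative maximal quotients, and then simply \emph{asserts} (crediting Proctor) that the heap of $\wo^{\langle i\rangle}$, labeled by inversion roots, is the order filter of \Cref{eq:subposet}. The new content you try to supply is a proof of that identification, and that is exactly where there is a genuine gap. In your forward direction, after commuting the two cover letters to be adjacent so the reduced word reads $\cdots u\, s_a s_b \cdots$, the computation gives $\beta_j=u(\alpha_a)$ and $\beta_{j'}=us_a(\alpha_b)=u(\alpha_b)-\langle\alpha_b,\alpha_a^\vee\rangle\,u(\alpha_a)$; this does \emph{not} show the roots differ by a simple root, since $u(\alpha_b)$ need not be simple (or even positive) and $-\langle\alpha_b,\alpha_a^\vee\rangle$ need not equal $1$. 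In fact the claim is false under your stated hypothesis of full commutativity alone: for $s_1s_3s_2\in W(A_3)$ the heap cover between positions $1$ and $3$ carries the roots $\alpha_1$ and $\alpha_1+\alpha_2+\alpha_3$, whose difference $\alpha_2+\alpha_3$ is not simple; worse, for $s_2s_1s_2\in W(C_2)$ (with the paper's labeling, $\alpha_1$ long --- this is $\wo^{\langle 2\rangle}$, a fully commutative quotient longest element for a node whose coefficient in the highest root is $2$) the heap cover between positions $2$ and $3$ carries $\beta_2=\alpha_1+2\alpha_2$ and $\beta_3=\alpha_1+\alpha_2$, so the heap order is not even compatible with dominance. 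Hence the minuscule hypothesis is indispensable in this direction too (e.g.\ via $\langle\beta,\omega_i^\vee\rangle=1$ for every $\beta$ in the filter, which controls the prefixes $u$), whereas your argument explicitly reserves minusculeness for the converse only.

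The converse direction is moreover only a sketch: you name the construction of a reduced word realizing each dominance cover $\beta\lessdot\beta+\alpha_k$ as the technical crux but give no argument for it. Since the heap-versus-root-filter identification is precisely the nontrivial content supplied by \cite{proctor1984bruhat} and \cite{stembridge2001minuscule} --- and the only step the paper leaves to the literature --- the proposal as written does not yet prove the theorem; you need either an argument that genuinely uses minusculeness in both directions (for instance by tracking the weights $u(\omega_i)$ along the quotient, where dominance order on the orbit matches weak order), or an honest citation for this step. A smaller point: with the paper's inversion-set convention $\w=(-w(\Phi^+))\cap\Phi^+$ and $W^J=W/W_J$, the inversion set of $\wo\wo(\langle i\rangle)$ works out to $\Phi^+\setminus\Phi^+_{\langle i^*\rangle}$ with $i^*$ the image of $i$ under the diagram automorphism $-\wo$, so your set-level verification needs either the opposite convention or a remark that this twist merely permutes minuscule nodes.
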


In particular, for $\omega$ minuscule, the weak order on $W^{\sf P}$ is a distributive lattice.  When $\omega$ is a (co)minuscule weight, we shall also use the term \defn{(co)minuscule} to describe the corresponding flag variety $\GP$ and poset $\Lambda_{\GP}$.

\subsection{Explicit Constructions}
\label{sec:explicit}

We explicitly identify the minuscule posets from~\Cref{fig:names} by giving reduced words for $\wo^{\langle i \rangle}$. The corresponding posets can then be built as heaps using~\Cref{eq:heap}.

\subsubsection{$(A_{n-1},k)$: the \defn{Grassmannian} $\Gr(k,n)$}  In type $A_{n-1}$, any fundamental weight $\omega_k$ is minuscule.    For $W=W(A_{n-1})$, 
  the longest element $\wo^{\langle k \rangle} \in W^{\langle k \rangle}$ has reduced word \[\wo^{\langle k \rangle}=\prod_{j=1}^{n-k}\prod_{i=k-j+1}^{n-j} s_i.\]  The poset $\Lambda_{\Gr(k,n)}$ is commonly described as a $[k]\times[n-k]$ rectangle, represented as the partition $\left((n-k)^k\right)=\underbrace{(n-k,n-k,\ldots,n-k)}_{\text{$k$ parts}}$.  
Thus, as in~\Cref{rem:drawing}, an order ideal in $\Lambda_{\Gr(k,n)}$ (for any $k,n$) may be drawn as a Ferrers shape. The corresponding minuscule variety is a \defn{Grassmannian} $\Gr(k,n)$, a parameter space for $k$-dimensional linear subspaces of $\mathbb{C}^n$.


\subsubsection{$(C_{n},1)$: the \defn{Lagrangian Grassmanian} $\LG(n,2n)$}\label{sec:lg} In type $C_n$, $\omega_1$ is a cominuscule weight.  For $W=W(C_{n})$, 
the longest element $\wo^{\langle 1 \rangle}$ of $W^{\langle 1 \rangle}$ has reduced word \[\wo^{\langle 1 \rangle}=\prod_{i=1}^n \prod_{j=1}^{n-i+1} s_j,\] so that---when drawn as a shifted Ferrers shape as in~\Cref{rem:drawing}---$\Lambda_{\LG(n,2n)}$ is a shifted staircase of order $n$. We write this as the shifted partition $(n,n-1,\ldots,1)_*$.
The corresponding cominuscule variety $\LG(n,2n)$ is a \defn{Lagrangian Grassmannian}. It can be realized as the subvariety of the type $A$ Grassmannian $\Gr(n,2n)$ consisting of those points corresponding to the $n$-dimensional linear subspaces of $\mathbb{C}^{2n}$ that are isotropic with respect to a fixed nondegenerate symplectic form.

\subsubsection{$(D_{n},1)$ and $(D_{n},2)$: the \defn{Even Orthogonal Grassmanian} $\OG(n,2n)$}\label{sec:typeDels}  In type $D_n$, $\omega_1$ and $\omega_2$ are minuscule weights with isomorphic minuscule posets $\Lambda_{\OG(n,2n)}$.  For $W=W(D_{n})$,
write $s_{1,2}(j)=\begin{cases} s_1 & \text{if } j \text{ is odd} \\  s_2 & \text{if } j \text{ is even}\end{cases}.$  The longest elements of $W^{\langle i \rangle}$ for $i \in \{1,2\}$ have reduced words  
\begin{align*}
\wo^{\langle 1 \rangle}&=\prod_{j=1}^n \left( s_{1,2}(j) \prod_{k=3}^{n-j+1} s_k\right)\\
\wo^{\langle 2 \rangle}&=\prod_{j=1}^n \left( s_{1,2}(j+1) \prod_{k=3}^{n-j+1} s_k \right)
\end{align*}
When drawn as a shifted Ferrers shape, the poset $\Lambda_{\mathsf{OG}(n,2n)}$ is a shifted staircase of order $n-1$. 
The \defn{even orthogonal Grassmanian} $\OG(n,2n)$ is the minuscule subvariety of $\Gr(n,2n)$ parametrizing those $n$-dimensional linear subspaces of $\mathbb{C}^{2n}$ that are isotropic with respect to a fixed nondegenerate symmetric bilinear form.

\subsubsection{$(D_n,n)$: the \defn{Even Dimensional Quadric} $\mathbb{Q}^{2n-2}$}  Again in type $D_n$, the weight $\omega_n$ is also minuscule but with poset $\Lambda_{\QQ^{2n-2}}$.  
The corresponding longest element of $W^{\langle n \rangle}$ has reduced word
\begin{align*}
\wo^{\langle n \rangle}&=\left ( \prod_{j=3}^n s_j \right)^{-1} (s_1 s_2) \left( \prod_{j=3}^n s_j \right).
\end{align*}
 The poset $\Lambda_{\QQ^{2n-2}}$ can be compactly described as either the iterated distributive lattice of order ideals $\J^{n-3}([2]\times[2])$, or as the ordinal sum of a chain of length $n-2$, an antichain of size $2$, and a chain of length $n-2$: \[\Lambda_{\QQ^{2n-2}}:=[n-2]\oplus ([1]\sqcup [1]) \oplus [n-2].\] 
The corresponding minuscule variety is a quadric hypersurface $\mathbb{Q}^{2n-2}$.

\subsubsection{$(E_6,\omega_1)$ and $(E_6,\omega_6)$: the \defn{Cayley plane}}

In type $E_6$, $\omega_1$ and $\omega_6$ are minuscule weights with isomorphic minuscule posets $\Lambda_{\mathbb{OP}^2}$.  For $W=W(E_6)$,
the longest elements of $W^{\langle i \rangle}$ for $i \in \{1,6\}$ have reduced words  
\begin{align*}
\wo^{\langle 1 \rangle}&=s_1s_3s_4s_5s_6s_2s_4s_5s_3s_4s_1s_3s_2s_4s_5s_6\\
\wo^{\langle 6 \rangle}&=s_6s_5s_4s_3s_1s_2s_4s_3s_5s_4s_6s_5s_2s_4s_3s_1.
\end{align*}
The corresponding minuscule variety is the \defn{Cayley plane} $\mathbb{OP}^2$, the projective plane over the complexification of the octonions. This space was first constructed by R.~Moufang \cite{moufang}. For further discussion of the Cayley plane, see the thorough exposition of J.~Baez \cite{baez}.

\subsubsection{$(E_7,\omega_1)$: the \defn{Freudenthal variety} $\mathsf{G}_{\omega}(\mathbb{O}^3,\mathbb{O}^6)$}  In type $E_7$, only $\omega_1$ is a minuscule weight.   For $W=W(E_7)$, a reduced word for the longest element of $W^{{\langle 1 \rangle}}$ is \[\wo^{\langle 1 \rangle}=s_1s_3s_4s_5s_6s_2s_5s_4s_3s_1s_7s_6s_5s_4s_3s_2s_5s_4s_6s_5s_7s_6s_2s_4s_3s_1.\]  The poset $\Lambda_{\mathsf{G}_{\omega}(\mathbb{O}^3,\mathbb{O}^6)}$ is the second poset from the left in~\Cref{fig:coincidental_top_half_minuscules}.
We refer to the corresponding minuscule variety as the \defn{Freudenthal variety}. This space parametrizes certain copies of $\mathbb{O}^3$ in $\mathbb{O}^6$, where $\mathbb{O}$ denotes the complexification of the octonions; for details, see the work of J.~Tits \cite[\textsection E.5]{tits}.



\section{Schubert Calculus}
\label{sec:ring}

We now turn to the algebro-geometric context for minuscule posets.
  This section sets up the rings necessary to state~\Cref{thm:main1}, which establishes an equivalence between products in these rings and certain bijections.  The equivalence will follow from the combinatorics of the structure coefficients for the rings, which we review in~\Cref{sec:structure_coeffs}.  

\subsection{Cohomology}

Recall from~\Cref{sec:flag_varieties} that for ${\sf P}$ a parabolic subgroup of ${\sf G}$, the generalized flag variety $\GP$ has the Bruhat decomposition \[\GP = \bigsqcup_{w \in W^{\sf P}} {\sf B}_- w {\sf P} / {\sf P}.\]  For $w \in W^{\sf P}$, the \defn{Schubert variety} is the closures $X_w := \overline{ {\sf B}_- w {\sf P} / {\sf P}}$, while the \defn{opposite Schubert variety} is $X^w := \overline{ {\sf B}- w {\sf P} / {\sf P}}$. Schubert varieties intersect transversally with opposite Schubert varieties, and the intersection $X_u^w := X_u \cap X^w$ is called a \defn{Richardson variety}. 
The \defn{Schubert classes}  $\sigma_w$ are the Poincar\'e duals of the Schubert varieties. Since the Bruhat decomposition is a cell decomposition, the set $\{ \sigma_w \}_{w \in W^{\sf P}}$ is a $\ZZ$-linear basis of the cohomology ring $H^\star(\GP, \ZZ)$.
 As such, any cup product $\sigma_w \cdot \sigma_u$ of basis elements can be expressed in the basis:
\[\sigma_w \cdot \sigma_u = \sum_{v \in W^{\sf P}} c_{w,u}^v \sigma_v.\]
In this setting, $H^\star(\GP, \ZZ)$ is isomorphic to the Chow ring $A^\star(\GP)$ of subvarieties up to rational equivalence, where the ring product is given by transverse intersection.

The Borel homomorphism from $H^\star(\Gr(k,n))$ to the coinvariant ring identifies Schubert classes with Schur functions, and in this case the $c_{w,u}^v$ are known as the \defn{Littlewood-Richardson coefficients}~\cite{lesieur1947problemes}.  This setup therefore generalizes the specific example discussed in~\Cref{sec:philo}.   

For $\GP$ minuscule, H.~Thomas and A.~Yong gave a uniform combinatorial formula for  $c_{w,u}^v$~\cite{thomas2009combinatorial}.     Their formula generalizes M.-P.~Sch\"utzenberger's well-known rule for $\GP=\Gr(k,n)$.   Given a standard tableau $\T \in \SYT({\sf v}/{\sf w})$, there is a \defn{rectification} map (whose definition we defer until~\Cref{sec:rectification}, where it will be given in greater generality) that produces a tableau $\rect(\T) \in \SYT({\sf u'})$, for some $u' \in W^{\sf P}$.

\begin{theorem}[\cite{thomas2009combinatorial}]  For $\GP$ minuscule,
the coefficient $c_{w,u}^v$ equals the number of standard tableaux $\T \in \SYT({\sf v}/{\sf w})$ whose rectification is any fixed standard tableau of shape ${\sf u}$.
\label{thm:coefficients_LR}
\end{theorem}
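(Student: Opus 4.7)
The plan is to emulate M.-P.~Schützenberger's classical proof for Grassmannians while exploiting the uniform combinatorial structure of minuscule posets. The argument factors naturally into three stages: (i) establishing well-definedness of rectification, (ii) using an infusion-type involution to show the tableau count is independent of the chosen target of shape $\uu$, and (iii) matching the combinatorial count with the geometric structure constants $c_{w,u}^v$.

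For stage (i), I would show that on any minuscule poset $\Lambda_{\GP}$, every complete sequence of jeu-de-taquin slides applied to $\T \in \SYT(\vv/\w)$ produces the same straight-shape tableau. The proof would reduce to a local diamond lemma: whenever two inner corners are simultaneously available, performing the corresponding slides in either order yields the same tableau. This is tractable because minuscule posets are highly restricted (they are $d$-complete in R.~Proctor's sense), so the local configurations near a pair of inner corners fall into a short list of cases that can be verified individually.

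For stage (ii), I would define an infusion map on pairs $(\T, \U) \in \SYT(\vv/\w) \times \SYT(\w)$ by interleaving rectification slides of $\T$ into $\w$ with a reverse recording of the slide paths using the entries of $\U$. The output is a new pair $(\U', \T') \in \SYT(\w') \times \SYT(\vv/\w')$, where $\T' = \rect(\T)$ and $\w' = \uu$. The key property to verify is that this map is an involution on the appropriate disjoint union. Given this, the count $\#\{\T \in \SYT(\vv/\w) : \rect(\T) = \U_0\}$ depends only on the shape of $\U_0$, establishing the independence claim in the theorem.

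For stage (iii), I would identify the resulting combinatorial count with $c_{w,u}^v$ via a Pieri-style strategy: verify that the count satisfies the Pieri rule for multiplication by divisor Schubert classes (those corresponding to coatoms of $\Lambda_{\GP}$), then invoke the fact that $H^\star(\GP, \ZZ)$ is generated by its divisor classes, so associativity determines all structure constants from the Pieri rule. Alternatively, one could use a Vakil-style geometric degeneration realizing the transverse intersection of two Schubert varieties as a limit in which irreducible components are indexed by standard tableaux of the relevant skew shape. The main obstacle is the uniformity demanded by stages (i) and (ii) across the exceptional minuscule posets ($\mathbb{OP}^2$ and the Freudenthal variety): in classical types one can appeal to plactic monoids, words, or growth diagrams, but for $E_6$ and $E_7$ one must argue directly from the heap combinatorics of $\Lambda_{\GP}$, so the local-move analysis has to be handled intrinsically rather than by reduction to a linear model.
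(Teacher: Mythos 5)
Your stages (i) and (ii) track the actual Thomas--Yong strategy well: they do establish well-definedness of rectification on minuscule posets (via Haiman-style dual equivalence relations rather than a bare diamond lemma, which is worth noting — local confluence of single slides is not enough on its own, and the dual-equivalence machinery is what controls the global behavior), and they do introduce the infusion involution to prove that the count is independent of the rectification target of shape $\uu$.

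Stage (iii), however, contains a genuine gap. The claim that $H^\star(\GP,\ZZ)$ is generated by its divisor classes is false for the even quadric $\mathbb{Q}^{2n}$: there the unique hyperplane class $\sigma_1$ has powers $1, \sigma_1, \ldots, \sigma_1^{2n}$ spanning a rank-$(2n+1)$ subring of the rank-$(2n+2)$ cohomology, since the two middle-degree Schubert classes $\sigma', \sigma''$ satisfy $\sigma_1^n = \sigma' + \sigma''$ but are not individually expressible in $\sigma_1$. Worse, the Pieri rule treats $\sigma'$ and $\sigma''$ identically ($\sigma_1 \cdot \sigma' = \sigma_1 \cdot \sigma'' = \sigma_{n+1}$), and one can check that both the correct intersection numbers and the ``swapped'' ones are consistent with Pieri, associativity, commutativity, the grading, and Poincar\'e duality — so these constraints alone genuinely fail to pin down $\sigma' \cdot \sigma'$ versus $\sigma' \cdot \sigma''$. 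Your Vakil-degeneration alternative is also not available in the generality you need: geometric LR rules of that style are established for Grassmannians and some partial flag varieties but not for the exceptional minuscule spaces $\mathbb{OP}^2$ and $G_\omega(\mathbb{O}^3,\mathbb{O}^6)$, nor (to my knowledge) uniformly for quadrics. Thomas and Yong supply the missing input via a geometric ``cominuscule recursion'' relating structure constants across a chain of embedded minuscule flag varieties, which handles the quadric ambiguity and the exceptional types without the divisor-generation assumption; your write-up needs some replacement for that ingredient to close the argument.
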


\subsection{\K-Theory}
\label{sec:k_theory_intro}

\K-theoretic Schubert calculus turns to the Grothendieck ring $K(\GP)$ of algebraic vector bundles over $\GP$ as a richer variant of the ordinary cohomology ring $H^\star(\GP)$. The \K-theory ring $K(\GP)$ has a $\mathbb{Z}$-linear basis given by the classes of the Schubert varieties' structure sheaves $\{[\mathcal{O}_{X_w}]\}_{w \in W^{\sf P}}$.  As before, we have an expansion:
\begin{equation}[\mathcal{O}_{X_w}] \cdot [\mathcal{O}_{X_u}] = \sum_{v \in W^{\sf P}} C_{w,u}^v [\mathcal{O}_{X_v}],\label{eq:coefficients}\end{equation} where now \[(-1)^{|\vv|-|\w|-|\uu|} C_{w,u}^v \in \mathbb{Z}_{\geq 0}\] (as shown in greater generality by M.~Brion~\cite{brion2002positivity}).
These \K-theoretic structure constants generalize their cohomological counterparts---$C_{w,u}^v=c_{w,u}^v$ whenever $|\vv|=|\w|+|\uu|$, but when $|\vv| > |\w| + |\uu|$, $c_{w,u}^v=0$ while $C_{w,u}^v$ can be \emph{nonzero}.

The first part of \Cref{sec:structure_coeffs} is devoted the combinatorics required to generalize~\Cref{thm:coefficients_LR} to a combinatorial model for the \K-theoretic structure constants $C_{w,u}^v$.  But even without such an explicit rule, we can already state a specialized result that allows the determination of the $C_{w,u}^v$ from their cohomological analogues.
%
%

\medskip

When $\sigma_w \cdot \sigma_u$ expands as a multiplicity-free sum of Schubert classes in $H^\star(\GP)$, a result of A.~Knutson determines the corresponding expansion of $[\mathcal{O}_{X_w}] \cdot [\mathcal{O}_{X_u}]$ in $K(\GP)$.  Recall that the \defn{M\"{o}bius function} of a poset $\Po$ is the function $\mu_\Po: \Po \times \Po \to \mathbb{Z}$ uniquely characterized by $\mu_\Po(x,x)=1$ and the fact that for all $x\prec y \in \Po$, \begin{equation}\sum_{x \preceq z \preceq y} \mu_\Po(x,z)=0.\label{eq:mobius}\end{equation}  Given a poset $\Po$, we shall adjoin a minimal element $\hat{0}$ and write $\hat{\mu}_\Po(x):=-\mu_\Po(\hat{0},x)$.

\begin{theorem}[{A.~Knutson~\cite[Theorem 3]{knutson2009frobenius}}]
Suppose
\[\sigma_w \cdot \sigma_u = \sum_{v \in D} \sigma_v\] is a multiplicity-free product in $H^\star(\GP)$, where $D \subseteq W^{\sf P}$ represents the set of $v \in W^{\sf P}$ that appear. Write $\Po:=\{ y \in W^{\sf P} : y \geq v, \text{ for some } v \in D\}$.  Then the corresponding expansion in $K(\GP)$ is
\[[\mathcal{O}_{X_w}] \cdot [\mathcal{O}_{X_u}] = \sum_{y \in \Po} \hat{\mu}_{\Po}(y) [\mathcal{O}_{X_y}].\]
\label{thm:knutson_mult_free}
\end{theorem}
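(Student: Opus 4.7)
The plan is to combine a geometric identification of the K-theoretic product with a Möbius inversion on the Bruhat poset.

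First, I would identify $[\mathcal{O}_{X_w}] \cdot [\mathcal{O}^{X_u}]$ with $[\mathcal{O}_{X^u_w}]$, the class of the Richardson variety $X^u_w := X_w \cap X^u$. This uses the facts that Richardson varieties are Cohen-Macaulay and Frobenius split (due to Brion and to Knutson--Lam--Speyer), together with the observation that $X_w \cap X^u$ is a proper and generically transverse intersection, so the K-theoretic product matches scheme-theoretic intersection.

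Second, I would invoke the general support principle for K-theoretic Schubert calculus: when we expand $[\mathcal{O}_{X^u_w}] = \sum_{y \in W^{\sf P}} C_y [\mathcal{O}_{X_y}]$, the coefficient $C_y$ vanishes unless $y$ lies in the Bruhat-upward closure of the cohomological support $D$, namely $\Po$. This follows from the codimension filtration of $K(\GP)$, whose associated graded recovers $H^\star(\GP)$. Moreover, since all $v \in D$ share the common length $|w|+|u|$ and thus form an antichain in Bruhat order, the minimal elements of $\Po$ are exactly $D$, and for such $y$ the same filtration identifies $C_y$ with the cohomological coefficient, which is $1$ under the multiplicity-free hypothesis. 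It therefore remains to determine $C_y$ for $y \in \Po \setminus D$.

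Third, I would recover the remaining coefficients by pairing. For each $y \in \Po$, the K-theoretic pairing of $[\mathcal{O}_{X^u_w}]$ with the opposite Schubert class $[\mathcal{O}^{X^y}]$ computes the Euler characteristic $\chi(\mathcal{O}_{X^u_w \cap X^y})$. Using compatible Frobenius splitting, one verifies that this double intersection is a nonempty connected Cohen-Macaulay subvariety whenever $y \in \Po$, so this Euler characteristic equals $1$ in that range (and $0$ otherwise). On the other hand, applying the same pairing to $\sum_z C_z [\mathcal{O}_{X_z}]$ gives $\sum_{z \in \Po,\, z \leq y} C_z$. Equating the two sides yields the triangular system
\[
\sum_{z \in \Po,\, z \leq y} C_z \;=\; 1 \quad \text{for every } y \in \Po,
\]
and Möbius inversion over $\Po \cup \{\hat{0}\}$ forces $C_y = \hat{\mu}_\Po(y)$.

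The main obstacle is the third step: justifying that $\chi(\mathcal{O}_{X^u_w \cap X^y}) = 1$ for every $y \in \Po$. This requires showing that the iterated intersection $X_w \cap X^u \cap X^y$ is nonempty, connected, and Cohen-Macaulay, which hinges on a splitting of $\GP$ compatible with Schubert and opposite Schubert subvarieties simultaneously, and on using the multiplicity-free hypothesis to rule out spurious empty intersections inside the upward closure.
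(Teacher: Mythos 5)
First, a point of orientation: the paper does not prove this statement at all --- it is imported verbatim from A.~Knutson's work and used as a black box --- so there is no internal proof to compare against; what you have written is an attempt to reconstruct Knutson's argument. Your overall skeleton (identify the product with a Richardson class, bound the support, pair against opposite structure sheaves, then invert) is a sensible shape, and the final step is correct algebra: the unitriangular system $\sum_{z \in \Po,\, z \leq y} C_z = 1$ for all $y \in \Po$, combined with $C_z = 0$ off $\Po$, does force $C_y = \hat{\mu}_{\Po}(y)$. A smaller slip: the product of two Schubert classes of the \emph{same} type is the class of a Richardson variety only after replacing one factor by the opposite Schubert variety with the dual index (equivalently a general translate), not $X^u$ literally.

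The genuine gaps are exactly at the two inputs your inversion needs. (i) The ``support principle'' of your second step does not follow from the codimension filtration: that filtration only identifies the lowest-length terms of the $K$-expansion with the cohomological expansion, and says nothing about whether the higher terms lie Bruhat-above an element of $D$. That containment is essentially equivalent to knowing the Richardson degenerates to a scheme supported on $\bigcup_{v \in D} X_v$, i.e.\ it is part of what is being proved, not a formal consequence. (ii) The claim $\chi\bigl(\mathcal{O}_{X_w \cap X^{\check{u}} \cap X^y}\bigr) = 1$ for every $y \in \Po$ (and $0$ otherwise) is the heart of the matter and is left unproven: this triple intersection is a Richardson met with a second opposite Schubert variety, it need not be irreducible, its nonemptiness is not governed by a simple Bruhat condition, and you would further need Tor-vanishing (so the pairing computes $\chi$ of the honest intersection) plus higher-cohomology vanishing; crucially, this is precisely where the multiplicity-free hypothesis must enter, and your sketch supplies no mechanism linking $D$ to the geometry of these intersections. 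Knutson's actual route avoids both issues: using Frobenius splitting he flatly degenerates the (translated) Richardson variety to a reduced union of Schubert varieties; multiplicity-freeness in cohomology forces the limit to be exactly $\bigcup_{v \in D} X_v$ with multiplicity one; and since all intersections of Schubert varieties are reduced unions of Schubert varieties (compatibly split), inclusion--exclusion for the structure sheaf of that union produces the M\"obius-function coefficients over $\Po$ directly. The two facts you assume are, in effect, corollaries of that degeneration, so as written your proposal is a plausible outline with a real gap rather than a complete alternative proof.
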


\begin{example}
\ytableausetup{boxsize=.5em}
Continuing~\Cref{ex:prod_sum}, for $a=b=2$, we have 
\begin{align*}\left[\raisebox{\height}{$\mathcal{O}_{\ydiagram{2,2}*[*(gray)]{2,2}}$}\right]^2 = \left[\raisebox{\height}{$\mathcal{O}_{\ydiagram{2,2,2,2}*[*(gray)]{2,2}}$}\right]+\left[\raisebox{\height}{$\mathcal{O}_{\ydiagram{3,2,2,1}*[*(gray)]{2,2}}$}\right]+\left[\raisebox{\height}{$\mathcal{O}_{\ydiagram{4,2,2}*[*(gray)]{2,2}}$}\right]&+\left[\raisebox{\height}{$\mathcal{O}_{\ydiagram{3,3,1,1}*[*(gray)]{2,2}}$}\right]+\left[\raisebox{\height}{$\mathcal{O}_{\ydiagram{4,3,1}*[*(gray)]{2,2}}$}\right]+\left[\raisebox{\height}{$\mathcal{O}_{\ydiagram{4,4}*[*(gray)]{2,2}}$}\right] \\
-\left[\raisebox{\height}{$\mathcal{O}_{\ydiagram{3,2,2,2}*[*(gray)]{2,2}}$}\right]-\left[\raisebox{\height}{$\mathcal{O}_{\ydiagram{3,3,2,1}*[*(gray)]{2,2}}$}\right]-\left[\raisebox{\height}{$\mathcal{O}_{\ydiagram{4,2,2,1}*[*(gray)]{2,2}}$}\right]&-\left[\raisebox{\height}{$\mathcal{O}_{\ydiagram{4,3,1,1}*[*(gray)]{2,2}}$}\right]-\left[\raisebox{\height}{$\mathcal{O}_{\ydiagram{4,3,2}*[*(gray)]{2,2}}$}\right]-\left[\raisebox{\height}{$\mathcal{O}_{\ydiagram{4,4,1}*[*(gray)]{2,2}}$}\right] \\ &+\left[\raisebox{\height}{$\mathcal{O}_{\ydiagram{4,3,2,1}*[*(gray)]{2,2}}$}\right].\end{align*}
\ytableausetup{boxsize=normal}
In fact, as we will prove in a forthcoming paper, the square of a structure sheaf indexed by a rectangle is always multiplicity-free.
\label{ex:prod_sum2}
\end{example}

We will be particularly interested in multiplicity-free products in $K(\GP)$. Conveniently, by~\Cref{thm:knutson_mult_free}, to determine multiplicity-freeness in $K(\GP)$, it suffices to check the corresponding statement in $H^\star(\GP)$ and then apply the theorem.

\begin{remark}\rm
\ytableausetup{boxsize=.5em}
It is \emph{not} the case that a multiplicity-free product in cohomology necessarily yields a multiplicity-free product in \K-theory.  For example, in $\Gr(3,6)$, we have
\begin{align*} \sigma_{\ydiagram{2,1}} \cdot \sigma_{\ydiagram{1,1}} = \sigma_{\ydiagram{2,2,1}}+\sigma_{\ydiagram{3,1,1}}+\sigma_{\ydiagram{3,2}}, \end{align*}
but
\begin{align*} \left[\raisebox{\height}{$\mathcal{O}_{\ydiagram{2,1}}$}\right]\cdot \left[\raisebox{\height}{$\mathcal{O}_{\ydiagram{1,1}}$}\right] = \left[\raisebox{\height}{$\mathcal{O}_{\ydiagram{2,2,1}}$}\right]+\left[\raisebox{\height}{$\mathcal{O}_{\ydiagram{3,1,1}}$}\right]+\left[\raisebox{\height}{$\mathcal{O}_{\ydiagram{3,2}}$}\right]-2\left[\raisebox{\height}{$\mathcal{O}_{\ydiagram{3,2,1}}$}\right]. \end{align*}
However, if the cohomological product is multiplicity-free \emph{with a single term}, i.e., 
\[
\sigma_w \cdot \sigma_u = \sigma_v,
\]
then it is immediate from \Cref{thm:knutson_mult_free} that the \K-theoretic product will also have a single term:
\[ [\mathcal{O}_{X_w}] \cdot [\mathcal{O}_{X_u}] = [\mathcal{O}_{X_v}]. \]
\ytableausetup{boxsize=normal}
\label{rem:notnecessarily}
\end{remark}

\section{Combinatorics of Structure Coefficients}
\label{sec:structure_coeffs}

In this section we introduce the combinatorial tools developed in the study of \K-theoretic Schubert calculus by H.~Thomas and A.~Yong~\cite{thomas2009jeu}, E.~Clifford, H.~Thomas, and A.~Yong~\cite{clifford2014k}, and A.~Buch and M.~Samuel~\cite{buch2014k}.  The main result we wish to review is a combinatorial formula for the \K-theoretic structure coefficients $C_{w,u}^v$ in the style of~\Cref{thm:coefficients_LR}.    The role of standard tableaux is now played by \emph{increasing tableaux}.

\subsection{Increasing Tableaux}

Following~\cite{buch2014k}, we generalize the language of increasing tableaux from the introduction.
Fix a finite poset $\Po$ with order relation $\prec$ and an alphabet $\mathcal{A}$ (assume the symbol $\bullet \not \in \mathcal{A}$).   For a skew shape $\vv/\w$, a \defn{tableau of shape $\vv/\w$ on the alphabet $\mathcal{A}$} is a map $\T: \vv/\w \to \mathcal{A}$.

\begin{definition}
\label{def:increasing_tableau}
  Let $\mathcal{A}$ be a totally-ordered alphabet with order relation $<$.  An \defn{increasing tableau of shape $\vv/\w$ on the alphabet $\mathcal{A}$} is a strictly order-preserving map $\T: \vv/\w \to \mathcal{A}$, that is if $\alpha\prec \beta$ in $\vv/\w$ then $\T(\alpha)<\T(\beta)$.  We write $\IT^{\mathcal{A}}(\vv/\w)$ for the set of all such maps.
\end{definition}

For two disjoint alphabets $\mathcal{A},\mathcal{B}$ with $\T \in \IT^{\mathcal{B}}(\w)$ (``$\mathcal{B}$'' for below) and $\U \in \IT^{\mathcal{A}}(\vv/\w)$ (``$\mathcal{A}$'' for above), we write $\T \sqcup \U$ for the increasing tableau in $\IT^{\mathcal{B}\sqcup \mathcal{A}}(\vv)$, where $\mathcal{B}\sqcup \mathcal{A}$ is totally ordered so that $b<a$ for all $b \in \mathcal{B}$ and $a \in \mathcal{A}$.  We define $\IT(\vv/\w):=\bigcup_{\kk=1}^\infty  \IT^{[\kk]}(\vv/\w)$ and set $\T^{\rm min}_{\vv/\w}$ to be the componentwise minimal increasing tableau in $\IT(\vv/\w)$.  We call $\T^{\rm min}_{\vv/\w}$ the \defn{minimal increasing tableau of shape $\vv/\w$} (see~\Cref{fig:double}(a) for an example); it will play an important role in the sequel.

\medskip

In special cases, the notions of increasing tableaux and $\Po$-partitions are simply related, as was first observed in~\cite{DPS}.

\begin{proposition}[{\cite[Theorem 4.1]{DPS}}]
For a ranked poset $\Po$ with all maximal chains of the same length $\hgt(\Po)$, there is a bijection between plane partitions of height $p$ and increasing tableaux in the alphabet $[1,\p+\hgt(\Po)]$: $$\PP^{[\p]}(\Po) \simeq \IT^{[\p+\hgt(\Po)]}(\Po).$$
\label{prop:bij_PP_IT}
\end{proposition}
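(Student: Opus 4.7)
The plan is to exhibit an explicit, inverse-pair bijection
\[ \Phi : \PP^{[\p]}(\Po) \to \IT^{[\p+\hgt(\Po)]}(\Po), \qquad \Psi : \IT^{[\p+\hgt(\Po)]}(\Po) \to \PP^{[\p]}(\Po) \]
that shifts values by the rank of the underlying poset element. The hypothesis that $\Po$ is ranked with all maximal chains of common length $\hgt(\Po)$ is exactly what is needed to make a rank function $r : \Po \to \{1, 2, \ldots, \hgt(\Po)\}$ well-defined and behave predictably on covers, so the bijection really does ``use up'' the extra values in the target alphabet.

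First I would set $r(x)$ to be the length (number of elements) of any saturated chain in $\Po$ from a minimal element up to $x$, normalized so that $r = 1$ on minimal elements. The ranked assumption guarantees $r$ is well-defined and satisfies $r(y) = r(x) + 1$ whenever $y \gtrdot x$. Then I would define $\Phi(\T)(x) := \T(x) + r(x)$ and $\Psi(\U)(x) := \U(x) - r(x)$. It is immediate that $\Phi$ and $\Psi$ are mutually inverse set maps $\Po \to \ZZ$; the substance of the proof lies in verifying that $\Phi$ lands in $\IT^{[\p+\hgt(\Po)]}(\Po)$ and $\Psi$ lands in $\PP^{[\p]}(\Po)$.

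For $\Phi$, a cover $y \gtrdot x$ yields $\Phi(\T)(y) - \Phi(\T)(x) = (\T(y) - \T(x)) + 1 \geq 1$, giving strict order-preservation; and values manifestly lie in $\{1, \ldots, \p + \hgt(\Po)\}$. For $\Psi$, the cover relation gives $\Psi(\U)(y) - \Psi(\U)(x) = (\U(y) - \U(x)) - 1 \geq 0$, giving weak order-preservation. The only nontrivial step is the range check $0 \leq \Psi(\U)(x) \leq \p$, and this is exactly where the common-chain-length hypothesis is essential: extending $x$ downward along a saturated chain to a minimal element gives $\U(x) \geq r(x)$, so $\Psi(\U)(x) \geq 0$; extending $x$ upward along a saturated chain to a maximal element (which, by hypothesis, has rank exactly $\hgt(\Po)$) yields $\U(x) \leq (\p + \hgt(\Po)) - (\hgt(\Po) - r(x)) = \p + r(x)$, so $\Psi(\U)(x) \leq \p$.

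I do not anticipate any genuine obstacle: the only place where something could go wrong is the upper bound on $\Psi(\U)(x)$, which would fail for a general poset in which some maximal chains through $x$ are shorter than others, since then one could not guarantee enough ``room'' above $x$ to force $\U(x)$ to be small. The hypothesis rules this out, so the proof is essentially a one-line shift with a careful bookkeeping check at the two extremes of the value range.
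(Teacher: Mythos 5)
Your proposal is correct and is essentially the paper's own proof: the paper's one-line argument is exactly ``add $i$ to the label of each element of rank $i$,'' and you have simply spelled out the (easy) verifications that this map and its inverse land in the advertised sets. The bookkeeping at the upper bound of $\Psi(\U)(x)$, using the common-chain-length hypothesis, is the right thing to isolate as the one place the hypothesis is genuinely used.
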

\begin{proof}
With our conventions, a bijection from plane partitions to increasing tableaux is evidently given by adding $i$ to the labels of the elements on the $i$th rank.
\end{proof}

Since all of the posets in~\Cref{thm:main_thm1} are of the required form, by~\Cref{prop:bij_PP_IT} we may henceforth deal only with increasing tableaux.  The significant advantage that increasing tableaux enjoy over $\Po$-partitions is that increasing tableaux are equipped with a well-developed theory of \K-theoretic jeu-de-taquin~\cite{thomas2009combinatorial,thomas2009jeu,clifford2014k,buch2014k}, a theory we now explore.

\subsection{Jeu-de-Taquin and Other Games}
\label{sec:ktheorycomb}

\subsubsection{Jeu-de-Taquin}
\label{sec:jdt}
Given a shape $\vv/\w \subseteq \Po$, a tableau $\T$ of shape $\vv/\w$ on $\mathcal{A}$, and $a \in \mathcal{A}$, we let
\[\T_a:=\{\alpha \in \vv/\w : \alpha \text{ covers or is covered by some } \beta \text{ for which } \T(\beta)=a\}.\]
For two letters $a,b \in \mathcal{A}$, we may ``exchange'' them in $\T$ to obtain a new tableau 
\[ \swap_{a,b}(\T)(\alpha) := \begin{cases} a & \text{if } \T(\alpha)=b \text{ and } \alpha \in \T_a; \\  b & \text{if } \T(\alpha)=a \text{ and } \alpha \in \T_b; \\ \T(\alpha) & \text{ otherwise.} \end{cases}\]

If we remove a set of maximal elements from $\w$ to obtain $\w'$, we may extend the tableau $\T$ on $\vv/\w$ to a tableau $\T'$ of shape $\vv/\w'$ by setting $\T'(\alpha):=\bullet$ for $\alpha \in \w/\w'$.  Given an increasing tableau $\T$ of shape $\vv/\w$ on the totally-ordered alphabet $\mathcal{A}$, the \defn{slide} of $\T$ into $\w/\w'$ is given by 
\[ \jdt_{\w/\w'}(\T) := \left(\prod_{a \in \mathcal{A}} \swap_{a,\bullet} \right)(\T'),  \]
where the product is in the given linear ordering for $\mathcal{A}$, and
where we restrict the domain of $\jdt_{\w/\w'}(\T)$ to the subset $\vv'/\w':=\{\alpha \subseteq \vv/\w' : \jdt_{\w/\w'}(\T)(\alpha)\neq \bullet\}$.  This procedure is invertible and hence bijective.

\begin{example}\rm The following illustration is an example of a slide for $\mathcal{A}=1<2<3<4<5<6$ (see~\Cref{ex:ex3} for several other illustrations).  Here, we supress mention of $\swap_{4,\bullet}$ and $\swap_{5,\bullet}$, as they act trivially in this example.

\begin{align*}\tiny
 \raisebox{-0.5\height}{
  \begin{tikzpicture}[scale=.3]
    \draw[thick] (-1 cm,-1) -- (2 cm,2) -- (-1cm,5);
        \draw[thick] (0 cm,0) -- (-1 cm,1);
        \draw[thick] (1 cm,1) -- (0 cm,2) -- (1,3);
        \draw[thick] (-1 cm,1) -- (0 cm,2)--(-1 cm,3)--(0,4);
        \draw[thick,solid,fill=white] (-1cm,-1) circle (.5cm) node {$\bullet$};
    \draw[thick,solid,fill=white] (0cm,0) circle (.5cm) node {1};
    \draw[thick,solid,fill=white] (1cm,1) circle (.5cm) node {2};
    \draw[thick,solid,fill=white] (2cm,2) circle (.5cm) node {4};
        \draw[thick,solid,fill=white] (-1cm,1) circle (.5cm) node {2};
        \draw[thick,solid,fill=white] (0cm,2) circle (.5cm) node {3};
        \draw[thick,solid,fill=white] (1cm,3) circle (.5cm) node {6};
        \draw[thick,solid,fill=white] (-1cm,3) circle (.5cm) node {};
        \draw[thick,solid,fill=white] (-1cm,5) circle (.5cm) node {};
        \draw[thick,solid,fill=white] (0cm,4) circle (.5cm) node {};
  \end{tikzpicture}}\xrightarrow{\swap_{1,\bullet}} \raisebox{-0.5\height}{
  \begin{tikzpicture}[scale=.3]
    \draw[thick] (-1 cm,-1) -- (2 cm,2) -- (-1cm,5);
        \draw[thick] (0 cm,0) -- (-1 cm,1);
        \draw[thick] (1 cm,1) -- (0 cm,2) -- (1,3);
        \draw[thick] (-1 cm,1) -- (0 cm,2)--(-1 cm,3)--(0,4);
        \draw[thick,solid,fill=white] (-1cm,-1) circle (.5cm) node {1};
    \draw[thick,solid,fill=white] (0cm,0) circle (.5cm) node {$\bullet$};
    \draw[thick,solid,fill=white] (1cm,1) circle (.5cm) node {2};
    \draw[thick,solid,fill=white] (2cm,2) circle (.5cm) node {4};
        \draw[thick,solid,fill=white] (-1cm,1) circle (.5cm) node {2};
        \draw[thick,solid,fill=white] (0cm,2) circle (.5cm) node {3};
        \draw[thick,solid,fill=white] (1cm,3) circle (.5cm) node {6};
        \draw[thick,solid,fill=white] (-1cm,3) circle (.5cm) node {};
        \draw[thick,solid,fill=white] (-1cm,5) circle (.5cm) node {};
        \draw[thick,solid,fill=white] (0cm,4) circle (.5cm) node {};
  \end{tikzpicture}}\xrightarrow{\swap_{2,\bullet}} \raisebox{-0.5\height}{
  \begin{tikzpicture}[scale=.3]
    \draw[thick] (-1 cm,-1) -- (2 cm,2) -- (-1cm,5);
        \draw[thick] (0 cm,0) -- (-1 cm,1);
        \draw[thick] (1 cm,1) -- (0 cm,2) -- (1,3);
        \draw[thick] (-1 cm,1) -- (0 cm,2)--(-1 cm,3)--(0,4);
        \draw[thick,solid,fill=white] (-1cm,-1) circle (.5cm) node {1};
    \draw[thick,solid,fill=white] (0cm,0) circle (.5cm) node {2};
    \draw[thick,solid,fill=white] (1cm,1) circle (.5cm) node {$\bullet$};
    \draw[thick,solid,fill=white] (2cm,2) circle (.5cm) node {4};
        \draw[thick,solid,fill=white] (-1cm,1) circle (.5cm) node {$\bullet$};
        \draw[thick,solid,fill=white] (0cm,2) circle (.5cm) node {3};
        \draw[thick,solid,fill=white] (1cm,3) circle (.5cm) node {6};
        \draw[thick,solid,fill=white] (-1cm,3) circle (.5cm) node {};
        \draw[thick,solid,fill=white] (-1cm,5) circle (.5cm) node {};
        \draw[thick,solid,fill=white] (0cm,4) circle (.5cm) node {};
  \end{tikzpicture}}\xrightarrow{\swap_{3,\bullet}} \raisebox{-0.5\height}{
  \begin{tikzpicture}[scale=.3]
    \draw[thick] (-1 cm,-1) -- (2 cm,2) -- (-1cm,5);
        \draw[thick] (0 cm,0) -- (-1 cm,1);
        \draw[thick] (1 cm,1) -- (0 cm,2) -- (1,3);
        \draw[thick] (-1 cm,1) -- (0 cm,2)--(-1 cm,3)--(0,4);
        \draw[thick,solid,fill=white] (-1cm,-1) circle (.5cm) node {1};
    \draw[thick,solid,fill=white] (0cm,0) circle (.5cm) node {2};
    \draw[thick,solid,fill=white] (1cm,1) circle (.5cm) node {3};
    \draw[thick,solid,fill=white] (2cm,2) circle (.5cm) node {4};
        \draw[thick,solid,fill=white] (-1cm,1) circle (.5cm) node {3};
        \draw[thick,solid,fill=white] (0cm,2) circle (.5cm) node {$\bullet$};
        \draw[thick,solid,fill=white] (1cm,3) circle (.5cm) node {6};
        \draw[thick,solid,fill=white] (-1cm,3) circle (.5cm) node {};
        \draw[thick,solid,fill=white] (-1cm,5) circle (.5cm) node {};
        \draw[thick,solid,fill=white] (0cm,4) circle (.5cm) node {};
  \end{tikzpicture}} \xrightarrow{\swap_{6,\bullet}} \raisebox{-0.5\height}{
  \begin{tikzpicture}[scale=.3]
    \draw[thick] (-1 cm,-1) -- (2 cm,2) -- (-1cm,5);
        \draw[thick] (0 cm,0) -- (-1 cm,1);
        \draw[thick] (1 cm,1) -- (0 cm,2) -- (1,3);
        \draw[thick] (-1 cm,1) -- (0 cm,2)--(-1 cm,3)--(0,4);
        \draw[thick,solid,fill=white] (-1cm,-1) circle (.5cm) node {1};
    \draw[thick,solid,fill=white] (0cm,0) circle (.5cm) node {2};
    \draw[thick,solid,fill=white] (1cm,1) circle (.5cm) node {3};
    \draw[thick,solid,fill=white] (2cm,2) circle (.5cm) node {4};
        \draw[thick,solid,fill=white] (-1cm,1) circle (.5cm) node {3};
        \draw[thick,solid,fill=white] (0cm,2) circle (.5cm) node {6};
        \draw[thick,solid,fill=white] (1cm,3) circle (.5cm) node {$\bullet$};
        \draw[thick,solid,fill=white] (-1cm,3) circle (.5cm) node {};
        \draw[thick,solid,fill=white] (-1cm,5) circle (.5cm) node {};
        \draw[thick,solid,fill=white] (0cm,4) circle (.5cm) node {};
  \end{tikzpicture}}\end{align*}
\label{ex:jdt_slide}
\end{example}

When $\T \in \SYT(\vv/\w)$ and $\w/\w'$ is a single box, this process recovers the usual notion of \defn{jeu-de-taquin} introduced by M.-P.~Sch\"{u}tzenberger.  Two tableaux $\T$ and $\T'$ are called \defn{jeu-de-taquin equivalent} if they are related by a sequence of slides and inverse slides.

\subsubsection{Rectification}
\label{sec:rectification}
Let $\T \in \IT^{[\kk]}(\w)$ be an increasing tableau and set \[\w_i:=\{\alpha \in \w : \T(\alpha)\leq \kk-i\},\] so that $\w_0=\w$ and $\w_\kk=\emptyset$.  The \defn{$\T$-rectification} of the skew increasing tableau $\U \in \IT^{\mathcal{A}}(\vv/\w)$ is the straight-shaped tableau 
\[ \rect_{\T}(\U) := \left(\prod_{i=0}^{\kk-1} \jdt_{\w_i/\w_{i+1}} \right) (\U).\]In other words, the $\T$-rectification of $\U$ uses $\T$ to determine the \textit{rectification order}: which elements of $\w$ become $\bullet$s at each stage in the rectification. \Cref{ex:jdt_slide} is an example of rectification, as is the sequence of slides in \Cref{sec:Rectangles_and_Trapezoids}. In the latter example, the rectification order is determined by the minimal tableau of shifted shape $\w = (3,2,1)_*$.

Given $\U \in \IT^{\mathcal{A}}(\vv/\w)$ and $\T, \T'\in \IT^{[\kk]}(\w)$, it is possible that $\rect_{{\sf T}}(\U)\neq \rect_{{\sf T'}}(\U)$. A \defn{unique rectification target (URT)} is an increasing tableau ${\sf R}$ of straight shape such that if $\rect_{\sf T}(\U)={\sf R}$ for \emph{some} ${\sf T} \in \IT({\sf w})$, then $\rect_{\T'}(\U)={\sf R}$ for \emph{all} ${\sf T'} \in \IT({\sf w})$.  In such cases where the tableau prescribing rectification order does not matter, we may simply write $\rect(\U) = R$.  

\begin{theorem}[{\cite[Theorem 3.12]{buch2014k}}] For ${\sf w}$ any straight shape in a minuscule poset, the minimal tableau $\T^{\rm min}_{\sf w}$ is a URT.
\label{thm:urt}
\end{theorem}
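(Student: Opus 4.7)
The plan is to prove this URT result by reducing to a local commutation argument between rectification orders and then leveraging the minimality of the target tableau. Fix $\U \in \IT(\vv/\uu)$ with inner shape $\uu$, and suppose $\rect_{\T}(\U) = \T^{\rm min}_\w$ for some $\T \in \IT(\uu)$; the goal is to show $\rect_{\T'}(\U) = \T^{\rm min}_\w$ for every $\T' \in \IT(\uu)$.

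First, I would reduce to the case that $\T$ and $\T'$ differ by swapping two adjacent labels $i, i+1$ at two incomparable boxes $a, b$ of $\uu$, since any two increasing tableaux of shape $\uu$ are connected by such elementary swaps (a Matsumoto-type argument on linear extensions of $\uu$). Under such a swap, the two rectification sequences agree outside a short window processing $a$ and $b$, and the comparison reduces to a local commutation: performing $\jdt_{\{a\}}$ followed by $\jdt_{\{b\}}$ should yield the same result as the reverse order.

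The key point is that this ``diamond'' property need not hold for arbitrary K-rectifications, but it must hold \emph{conditional on the rectification landing at $\T^{\rm min}_\w$} --- any local discrepancy introduced by commuting slides would propagate to produce a target with some entry strictly greater than the corresponding height in $\T^{\rm min}_\w$, contradicting the assumption that the componentwise minimal tableau has already been reached. The structural input making this analysis tractable is Stembridge's characterization of minuscule posets as heaps of fully commutative longest parabolic elements $\wo^{\langle i \rangle}$: this limits the possible local configurations of $\uu$, $a$, and $b$ to an enumerable list, allowing the conditional diamond property to be verified case-by-case and generalizing the Thomas--Yong analysis for type $A$.

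The main obstacle is precisely this case analysis: K-jeu-de-taquin slides can create and destroy labeled boxes via the ``everybody wins'' rule, so commutations must be checked rather than deduced from formal invertibility of slides (as in classical jeu-de-taquin). Once the conditional diamond property is established, rigidity of $\T^{\rm min}_\w$ --- its entries equal the heights of the corresponding elements of $\w$ and hence are the componentwise smallest possible --- closes the argument, because no divergence between $\rect_\T(\U)$ and $\rect_{\T'}(\U)$ can survive to the straight-shape stage without forcing an entry strictly above the minimum somewhere in one of the two outputs.
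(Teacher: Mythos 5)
The paper does not actually prove this statement---it is imported as \cite[Theorem~3.12]{buch2014k}, and the machinery of \Cref{sec:kknuth} (K-Knuth and weak K-Knuth equivalence of reading words, \Cref{thm:jdt_and_knuth_equiv}) is exactly the toolkit Buch and Samuel use in their proof. Their argument is quite different from yours: rather than commuting slides, they show that $\rhow(\T^{\rm min}_\w)$ has special structure (for Ferrers and shifted-Ferrers shapes it is a reduced word of a Grassmannian element), and then establish that its K-Knuth (respectively weak K-Knuth) class contains a \emph{unique} straight-shape increasing tableau; URT-ness is then immediate because rectification preserves the K-Knuth class. So even if your plan could be completed, it would be a genuinely different proof.

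That said, the plan as written has concrete gaps. First, the reduction step fails: Matsumoto-style connectivity via adjacent transpositions applies to \emph{linear extensions}, i.e.\ to $\SYT(\uu)$. But the rectification order is an arbitrary $\T \in \IT(\uu)$, and two increasing tableaux of shape $\uu$ can have a different number of distinct values, hence different numbers of level sets $\w_i/\w_{i+1}$; you cannot pass between them by swapping two adjacent labels at incomparable boxes. You would at minimum also need ``split/merge a level'' moves, and---crucially---a K-jeu-de-taquin slide $\jdt_{\w_i/\w_{i+1}}$ with multiple $\bullet$'s is \emph{not} the composition of single-$\bullet$ slides over the boxes of $\w_i/\w_{i+1}$ (the ``everybody wins'' rule sees all $\bullet$'s simultaneously), so you cannot reduce an increasing rectification order to a standard one either.

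Second, the core of the argument---the ``conditional diamond property''---is asserted rather than proved, and the closing minimality step does not close. You write that a local discrepancy would propagate to ``produce a target with some entry strictly greater than the corresponding height in $\T^{\rm min}_\w$,'' but there is no reason for the discrepancy to be componentwise comparable: K-slides can create and destroy cells, so the two rectifications could land on tableaux of \emph{different straight shapes}, in which case ``entry-by-entry minimality of $\T^{\rm min}_\w$'' says nothing. Controlling the output shape across rectification orders is precisely the hard content of the theorem, and it is what Buch--Samuel's reading-word invariants (e.g.\ \Cref{prop:kknuth}, on lengths of longest increasing subsequences) are designed to handle. As it stands, your argument quietly assumes the conclusion when it asserts that divergence must manifest as a strictly-too-large entry.
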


For $\GP$ minuscule and $w,u \leq v \in W^{\sf P}$, define
\begin{equation}
 	\R_{w,u}^v = \R_{\w,\uu}^\vv := \big\{ \T \in \IT(\vv/\w) : \rect(\T)=\T^{\rm min}_\uu \big\}
\label{eq:setr}
\end{equation}
to be the set of increasing tableaux of shape $\vv/\w$ that rectify to the minimal tableau of shape $\uu$.  We can now state A.~Buch and M.~Samuel's elegant combinatorial rule---building on seminal work of H.~Thomas and A.~Yong~\cite{thomas2009jeu} to generalize~\Cref{thm:coefficients_LR}---for the structure coefficients $C_{w,u}^v$ of~\Cref{eq:coefficients}.

\begin{theorem}[{\cite[Corollary 4.8]{buch2014k}}]  For $\GP$ minuscule, \[(-1)^{|\vv|-|\w|-|\uu|} C_{w,u}^v = \Big| \R_{w,u}^v \Big|.\]
\label{thm:k_theory_comb_structure}
\end{theorem}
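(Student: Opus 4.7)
The plan is to prove the formula by induction on $|u|$, with a K-theoretic Pieri rule as the base case. For the Pieri case, when $u$ indexes a special Schubert class (say a single covering relation over the point class, or more generally a class generating $K(G/P)$ as a ring), the coefficients $C_{w,u}^v$ are known geometrically from direct intersection calculations, and the count $|\R_{w,u}^v|$ can be evaluated by hand from the slide rules of Section 5.2.1. The sign $(-1)^{|v|-|w|-|u|}$ on the left-hand side matches Brion's positivity as a mechanical check.

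For the inductive step, I would exploit that the special Schubert classes generate $K(G/P)$ as a $\mathbb{Z}$-algebra. Any class $[\mathcal{O}_{X_u}]$ can then be expanded as a polynomial in Pieri classes, so the product $[\mathcal{O}_{X_w}] \cdot [\mathcal{O}_{X_u}]$ reduces to iterated Pieri products whose combinatorics is controlled by the base case. On the combinatorial side, one must show that $|\R_{w,u}^v|$ satisfies the corresponding recursion: this amounts to an \emph{associativity} statement saying that nested rectifications to minimal targets commute. Theorem \ref{thm:urt} (minimal tableaux are URTs) is the crucial input here, since it guarantees that rectification order does not affect the outcome, which is precisely the rigidity required for such an associativity property to hold.

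The hard step will be establishing this combinatorial associativity cleanly. One must analyze how K-jeu-de-taquin slides interact across multiple rectifications within a general minuscule poset, and check that the collection of intermediate skew tableaux arising from factoring $u$ as a Pieri product can be recombined without overcounting or undercounting. The main technical obstacle is that K-jeu-de-taquin is not locally reversible in the same way as classical jeu-de-taquin, so standard dual-equivalence arguments do not port over directly; this is exactly where the URT property does the heavy lifting.

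A less elegant but more direct alternative would be to realize $(-1)^{|v|-|w|-|u|} C_{w,u}^v$ as $(-1)^{\dim X_u^w - \dim X_v} \chi(X_u^w, \mathcal{O}_{X_u^w}(-\partial))$ for an appropriate boundary, and then match the Euler characteristic to $|\R_{w,u}^v|$ via a degeneration or cell decomposition of the Richardson variety $X_u^w$ indexed by tableaux in $\R_{w,u}^v$. This bypasses the combinatorial associativity entirely, but at the cost of importing substantially more algebro-geometric machinery (limits of Richardson varieties, explicit models for structure sheaves, and a careful accounting of Brion's signs).
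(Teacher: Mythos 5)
The paper does not prove this theorem; it imports it verbatim as Corollary~4.8 of Buch--Samuel (\texttt{buch2014k}) and uses it as a black box. Your comparison is therefore to the original literature rather than to an in-paper argument.

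Your primary plan---establish a $K$-theoretic Pieri base case, show the combinatorial product $|\R_{w,u}^v|$ is associative, then appeal to an axiomatic uniqueness characterization of $K(\GP)$---is essentially the strategy Thomas--Yong used for the Grassmannian case (\texttt{thomas2009jeu}), and the URT theorem (here \Cref{thm:urt}) is indeed the keystone that makes the combinatorial product well-defined before one can even ask about associativity. You are also right that classical dual-equivalence arguments do not port over and that this is precisely where the URT machinery earns its keep. So the overall architecture is sound and matches the known route in type $A$.

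That said, two of your steps are underestimates, and the second is a genuine gap. First, the claim that ``special Schubert classes generate $K(\GP)$ as a $\mathbb{Z}$-algebra'' is too strong as stated; what one actually has is a uniqueness theorem pinning down structure constants from a Pieri rule plus associativity plus agreement with cohomology on the top graded piece (Buch's characterization), and establishing the precise hypotheses of that characterization for each minuscule type requires care. Second, and more seriously, associativity of the combinatorial product on an \emph{arbitrary} minuscule poset is not a consequence of the URT theorem alone, and no uniform proof of it exists. Buch--Samuel do not prove the general minuscule case by a single Pieri-plus-associativity induction; they first settle the Grassmannian and (via doubling/shifted-tableau technology and weak $K$-Knuth equivalence) the maximal orthogonal Grassmannian, and then dispose of the remaining minuscule types (quadrics and the two exceptional spaces) by ad hoc projections and embeddings into those two families. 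Your sketch elides this case analysis, treating associativity as a single ``hard step'' rather than as a type-by-type reduction; if you tried to carry it out as written, you would get stuck in the $E_6$ and $E_7$ cases, where no direct $K$-jeu-de-taquin associativity argument is available. Your geometric alternative via Euler characteristics of degenerated Richardson varieties is a reasonable hope, but no such cell decomposition indexed by $\R_{w,u}^v$ is currently known outside the Grassmannian, so it is not a shortcut here.
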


\subsubsection{The Infusion Involution}
\label{sec:infusion_involution}

Instead of discarding the rectification order $\T$ when performing rectification, we can consider what happens to the \emph{pair} $(\T,\U)$ as we move ${\sf U}$ past ${\sf T}$.  We keep track of the two tableaux using two disjoint alphabets: $[\kk]$, and $[\overline{\kk}]:=\{\overline{1}<\overline{2}<\cdots<\overline{k}\}$.  
For $\U \in \IT^{[\kk]}(\vv/\w)$, we will write $\overline{\U}$ to denote the increasing tableau in $\IT^{[\overline{\kk}]}(\vv/\w)$ obtained by sending $i \mapsto \overline{i}$.

Let ${\sf T} \in \IT^{[i]}(\w)$ and ${\sf U} \in \IT^{[j]}(\vv/\w)$.  Informally, we will glue the tableau $\overline{\T}$ on the alphabet $[\overline{i}]$ to the bottom of the tableau $\U$ on $[j]$ and then slide one alphabet past the other, so that the total ordering \[ [\overline{i}]\sqcup [j]= \overline{1}<\overline{2}<\cdots<\overline{i}<1<2<\cdots<j\] becomes \[[j] \sqcup [\overline{i}] = 1<2<\cdots<j<\overline{1}<\overline{2}<\cdots<\overline{i}.\]
Formally, the \defn{infusion involution} of $(\T,\U) \in \IT^{[i]}(\w) \times \IT^{[j]}(\vv/\w)$ is the pair of tableaux $(\U',\T') \in \IT^{[j]}(\uu) \times \IT^{[i]}(\vv/\uu)$ defined by \[\U'\sqcup \overline{\T'}=\left( \prod_{a=1}^i \prod_{b=j}^1 \swap_{\overline{a},b} \right) (\overline{\T} \sqcup \U).\] See \Cref{fig:infinv} for an example.

\begin{theorem}[{\cite[Theorem 3.1]{thomas2009jeu}}]
Infusion is an involution. That is,
for ${\sf T} \in \IT(\w)$ and ${\sf U} \in \IT(\vv/\w)$, we have \[(\Kinf \circ \Kinf)({\sf T},{\sf U}) = ({\sf T},{\sf U}).\]
\label{thm:infusion_involution}
\end{theorem}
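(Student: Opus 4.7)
The plan is to reduce the involutivity of $\Kinf$ to the invertibility of individual jeu-de-taquin slides, which is classical. First I would observe that each swap $\swap_{a,b}$ is itself an involution on tableaux: this is immediate from the definition, since the only positions modified are those labeled $a$ or $b$ that are adjacent to the other label, and a second application simply exchanges them back.

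Next, I would reinterpret the inner partial product $\prod_{b=j}^1 \swap_{\overline{a},b}$ (for fixed barred label $\overline{a}$) as a single reverse jeu-de-taquin slide. Intuitively, this partial product takes the positions currently labeled $\overline{a}$, which sit at the top of the bottom region, and ``floats them up'' through the unbarred region $\U$ one neighbor at a time, eventually depositing them at outer corners of a new skew shape. Under this interpretation, $\Kinf$ is realized as a sequence of $i$ reverse slides on $\U$, one per label of $\T$, performed in the order $\overline{1}, \overline{2}, \ldots, \overline{i}$.

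Applying $\Kinf$ a second time to the output $(\U', \T')$ swaps the roles of the two alphabets: what were barred letters of $\T'$ now appear as labels in the ``top'' region, while $\U'$ now takes the role of the ``barred'' bottom. Under this relabeling, the analogous sequence of swaps defining the second infusion corresponds to a sequence of \emph{forward} jeu-de-taquin slides, processed in an order matched to the reverse slides from the first application. The key classical fact is that a forward slide and its associated reverse slide are mutually inverse, so matching the slides pairwise yields the desired identity.

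The main obstacle is exactly this matching of slides between the two applications of $\Kinf$: the second application naturally indexes its slides by the labels of $\U'$, while the slides we wish to reverse come indexed by the labels of $\T$. I would handle this by induction on $i + j$, peeling off the last reverse slide of the first application and the first forward slide of the second application, and verifying that they are inverses using a local analysis at the relevant outer corners (i.e.\ where the reverse slide deposited $\overline{a}$'s and where the corresponding forward slide picks them up). The base case $i=j=1$ reduces to the classical fact that a single jeu-de-taquin slide is reversed by a single inverse slide starting from the outer corner it produced.
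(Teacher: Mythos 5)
Two parts of the plan have genuine gaps.  First, $\swap_{a,b}$ is \emph{not} an involution on arbitrary tableaux: on a three-element chain $p_1 \lessdot p_2 \lessdot p_3$ with labels $(a,a,b)$, one application gives $(a,b,a)$, but a second gives $(b,a,b)$, not the original.  The involution property does hold when the sets of $a$-labeled and $b$-labeled cells each form an antichain, which excludes the configuration above.  That hypothesis is in fact maintained throughout infusion because a swap applied to such a tableau produces another such tableau --- but this preservation is exactly what one has to check; it is not ``immediate from the definition.''

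Second, and more fundamentally, the proposed pairwise matching of slides cannot work as stated.  The first application of $\Kinf$ consists of $i$ inner products (one per barred letter), while the second consists of $j$ (one per letter of $\U'$), so for $i \neq j$ there is no bijection between the slides at all.  Even when $i=j$, the last slide of the first application moves the cells labeled $\overline{1}$ upward through the entire unbarred alphabet, namely the composition $\swap_{\overline 1,1}\cdots\swap_{\overline 1,j}$, whereas the first slide of the second application (after relabeling) moves the cells labeled $j$ of $\U'$ upward through all of $\T'$, namely $\swap_{\overline 1,j}\swap_{\overline 2,j}\cdots\swap_{\overline i,j}$; these are visibly not mutually inverse, so the induction fails at the very first peel.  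The missing ingredient is that swaps on \emph{disjoint} letter pairs commute: $\swap_{a,b}\,\swap_{c,d} = \swap_{c,d}\,\swap_{a,b}$ whenever $\{a,b\}\cap\{c,d\}=\emptyset$, since each operator reads and modifies only the cells carrying its own two letters.  Using this, one checks that the $ij$ swaps of the second infusion (after the natural relabeling of the two alphabets) constitute a rearrangement of the \emph{reversal} of the first infusion's swap sequence obtained solely by transposing adjacent commuting factors; combined with the involutivity of each individual swap from the first paragraph, this gives $\Kinf^2 = \mathrm{id}$.  The proof thus must be carried out at the granularity of individual swaps rather than whole slides.
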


\begin{figure}
\[\vcenter{\hbox{\protect\begin{tikzpicture}[scale=.5]
	\node (a) at (0, 0) {};
	\node (b) at (1, 1) {};
	\node (c) at (0, 2) {};
	\node (d) at (2, 2) {};
	\node (e) at (1, 3) {};
	\node (f) at (0, 4) {};
	
	\node (g) at (3, 3) {};
	\node (h) at (4, 4) {};
	\node (i) at (5, 5) {};
	\node (j) at (6, 6) {};
	
	\node (k) at (2, 4) {};
	\node (l) at (3, 5) {};
	\node (m) at (4, 6) {};
	\node (n) at (5, 7) {};
	
	\node (o) at (1, 5) {};
	\node (p) at (2, 6) {};
	\node (q) at (3, 7) {};
	\node (r) at (4, 8) {};
	
	\node (s) at (0, 6) {};
	\node (t) at (1, 7) {};
	\node (u) at (2, 8) {};
	\node (v) at (3, 9) {};
	
	\draw (a) -- (b) -- (d) -- (g);
	\draw (b) -- (c) -- (e) -- (k);
	\draw (d) -- (e) -- (f) -- (o);
	
	\draw (g) -- (h) -- (i) -- (j);
	\draw (k) -- (l) -- (m) -- (n);
	\draw (o) -- (p) -- (q) -- (r);
	\draw (s) -- (t) -- (u) -- (v);
	\draw (g) -- (k) -- (o) -- (s);
	\draw (h) -- (l) -- (p) -- (t);
	\draw (i) -- (m) -- (q) -- (u);
	\draw (j) -- (n) -- (r) -- (v);
	
	\draw[thick,fill=white] (a) circle [radius=0.5cm] node {\textcolor{red}{$\overline{1}$}};
	\draw[thick,fill=white] (b) circle [radius=0.5cm] node {\textcolor{red}{$\overline{2}$}};
	\draw[thick,fill=white] (c) circle [radius=0.5cm] node {\textcolor{red}{$\overline{3}$}};
	\draw[thick,fill=white] (d) circle [radius=0.5cm] node {\textcolor{red}{$\overline{3}$}};
	\draw[thick,fill=white] (e) circle [radius=0.5cm] node {\textcolor{red}{$\overline{4}$}};
	\draw[thick,fill=white] (f) circle [radius=0.5cm] node {\textcolor{red}{$\overline{5}$}};
	
	\draw[thick,fill=white] (g) circle [radius=0.5cm] node {1};
	\draw[thick,fill=white] (h) circle [radius=0.5cm] node {3};
	\draw[thick,fill=white] (i) circle [radius=0.5cm] node {5};
	\draw[thick,fill=white] (j) circle [radius=0.5cm] node {6};
	\draw[thick,fill=white] (k) circle [radius=0.5cm] node {3};
	\draw[thick,fill=white] (l) circle [radius=0.5cm] node {5};
	\draw[thick,fill=white] (m) circle [radius=0.5cm] node {7};
	\draw[thick,fill=white] (n) circle [radius=0.5cm] node {8};
	\draw[thick,fill=white] (o) circle [radius=0.5cm] node {4};
	\draw[thick,fill=white] (p) circle [radius=0.5cm] node {7};
	\draw[thick,fill=white] (q) circle [radius=0.5cm] node {8};
	\draw[thick,fill=white] (r) circle [radius=0.5cm] node {\scalebox{.9}{10}};
	\draw[thick,fill=white] (s) circle [radius=0.5cm] node {6};
	\draw[thick,fill=white] (t) circle [radius=0.5cm] node {9};
	\draw[thick,fill=white] (u) circle [radius=0.5cm] node {\scalebox{.9}{10}};
	\draw[thick,fill=white] (v) circle [radius=0.5cm] node {\scalebox{.9}{11}};
\end{tikzpicture}}} \mapsto
\vcenter{\hbox{\protect\begin{tikzpicture}[scale=.5]
	\node (a) at (0, 0) {};
	\node (b) at (1, 1) {};
	\node (c) at (0, 2) {};
	\node (d) at (2, 2) {};
	\node (e) at (1, 3) {};
	\node (f) at (0, 4) {};
	
	\node (g) at (3, 3) {};
	\node (h) at (4, 4) {};
	\node (i) at (5, 5) {};
	\node (j) at (6, 6) {};
	
	\node (k) at (2, 4) {};
	\node (l) at (3, 5) {};
	\node (m) at (4, 6) {};
	\node (n) at (5, 7) {};
	
	\node (o) at (1, 5) {};
	\node (p) at (2, 6) {};
	\node (q) at (3, 7) {};
	\node (r) at (4, 8) {};
	
	\node (s) at (0, 6) {};
	\node (t) at (1, 7) {};
	\node (u) at (2, 8) {};
	\node (v) at (3, 9) {};
	
	\draw (a) -- (b) -- (d) -- (g);
	\draw (b) -- (c) -- (e) -- (k);
	\draw (d) -- (e) -- (f) -- (o);
	
	\draw (g) -- (h) -- (i) -- (j);
	\draw (k) -- (l) -- (m) -- (n);
	\draw (o) -- (p) -- (q) -- (r);
	\draw (s) -- (t) -- (u) -- (v);
	\draw (g) -- (k) -- (o) -- (s);
	\draw (h) -- (l) -- (p) -- (t);
	\draw (i) -- (m) -- (q) -- (u);
	\draw (j) -- (n) -- (r) -- (v);
	
	\draw[thick,fill=white] (a) circle [radius=0.5cm] node {1};
	\draw[thick,fill=white] (b) circle [radius=0.5cm] node {3};
	\draw[thick,fill=white] (c) circle [radius=0.5cm] node {4};
	\draw[thick,fill=white] (d) circle [radius=0.5cm] node {4};
	\draw[thick,fill=white] (e) circle [radius=0.5cm] node {6};
	\draw[thick,fill=white] (f) circle [radius=0.5cm] node {8};
	
	\draw[thick,fill=white] (g) circle [radius=0.5cm] node {5};
	\draw[thick,fill=white] (h) circle [radius=0.5cm] node {6};
	\draw[thick,fill=white] (i) circle [radius=0.5cm] node {8};
	\draw[thick,fill=white] (j) circle [radius=0.5cm] node {\scalebox{.9}{10}};
	\draw[thick,fill=white] (k) circle [radius=0.5cm] node {7};
	\draw[thick,fill=white] (l) circle [radius=0.5cm] node {8};
	\draw[thick,fill=white] (m) circle [radius=0.5cm] node {\scalebox{.9}{11}};
	\draw[thick,fill=white] (n) circle [radius=0.5cm] node {\textcolor{red}{$\overline{3}$}};
	\draw[thick,fill=white] (o) circle [radius=0.5cm] node {9};
	\draw[thick,fill=white] (p) circle [radius=0.5cm] node {10};
	\draw[thick,fill=white] (q) circle [radius=0.5cm] node {\textcolor{red}{$\overline{2}$}};
	\draw[thick,fill=white] (r) circle [radius=0.5cm] node {\textcolor{red}{$\overline{4}$}};
	\draw[thick,fill=white] (s) circle [radius=0.5cm] node {\scalebox{.9}{11}};
	\draw[thick,fill=white] (t) circle [radius=0.5cm] node {\textcolor{red}{$\overline{1}$}};
	\draw[thick,fill=white] (u) circle [radius=0.5cm] node {\textcolor{red}{$\overline{3}$}};
	\draw[thick,fill=white] (v) circle [radius=0.5cm] node {\textcolor{red}{$\overline{5}$}};
\end{tikzpicture}}}\]
\caption{The infusion involution. On the left, $\U$ is shown with black entries and $\overline{\T}$ shown with red, barred entries. The infusion involution of $(\T,\U)$ is $(\U',\T')$, where $\U'$ is the straight tableau in black on the right and $\overline{\T}'$ is the skew tableau in red on the right. }\label{fig:infinv}
\end{figure}


\subsection{Relations and Equivalence}\label{sec:kknuth}

\begin{figure}[htbp]
\begin{tikzpicture}
\draw[line width=0.3mm] (0,0) circle [x radius=1.2cm, y radius=1cm];
\draw[line width=0.3mm] (0,-.15) circle [x radius=2.8cm, y radius=2cm];
\draw[line width=0.3mm] (0,-.35) circle [x radius=3.5cm, y radius=3cm];
\draw[line width=0.3mm] (0,-.55) circle [x radius=4cm, y radius=4cm];
\draw (0,.1) node {$bac \sim bca$};
\draw (0,-.4) node {$cab \sim acb$};
\draw (0,-1.3) node {$a(a{+}1)a \sim (a{+}1)a(a{+}1)$};
\draw (0,-2.5) node {$aba \sim bab$};
\draw (0,-3) node {$aa \sim a$};
\draw (0,-3.8) node {$ab \mathbf{u} \sim ba \mathbf{u}$};
\draw (0,.6) node {Knuth};
\draw (0,1.3) node {Coxeter-Knuth};
\draw (0,2.2) node {\K-Knuth};
\draw (0,3) node {Weak \K-Knuth};
\end{tikzpicture}
\caption{(Standard) Knuth-like relations, where $a<b<c$ are distinct positive integers and $\mathbf{u}$ is a word of positive integers.}
\label{fig:knuth_rels}
\end{figure}

For $\T$ a standard or increasing tableau (in English tableau orientation, as in \Cref{fig:thmstanley} or \Cref{fig:double}), let $\rhow(\T)$ be the \defn{column reading word} obtained by reading the entries in the columns of $\T$ from left to right and bottom to top; where it will not cause confusion, we will abbreviate this to \defn{reading word}.   We wish to consider the set of words on the alphabet of positive integers, up to Knuth, Coxeter-Knuth, \K-Knuth, or weak \K-Knuth equivalences---whose defining relations are given in~\Cref{fig:knuth_rels}.  We note that our ``Knuth equivalence'' in~\Cref{fig:knuth_rels} acts only on triples of \emph{distinct} letters; it is therefore weaker than the usual notion from the theory of semistandard tableaux, although it agrees with the usual notion for reading words of standard tableaux.

Remarkably, as summarized in~\Cref{thm:jdt_and_knuth_equiv}, these relations on reading words of tableaux exactly mirror jeu-de-taquin slides on the tableaux themselves. As in~\Cref{rem:drawing} and~\Cref{sec:explicit}, recall that ``Ferrers tableau'' means a tableau of straight shape in $\Lambda_{\Gr(k,n)}$, while ``shifted Ferrers tableau'' means a tableau of straight shape in $\Lambda_{\OG(n,2n)}$.

\begin{theorem}[{\cite[Theorems~6.2 and~7.8]{buch2014k}}]
\begin{align*}\text{Two increasing (skew)} \left\{\begin{matrix} \text{Ferrers} \\ \text{shifted Ferrers} \end{matrix}\right\} \text{ tableaux } \T,\T' \text{ are jeu-de-taquin equivalent}  \\
 \text{ if and only if } \rhow(\T) \text{ and } \rhow(\T') \text{ are } \left\{\begin{matrix} \text{\K-Knuth} \\ \text{weakly \K-Knuth} \end{matrix}\right\} \text{ equivalent}.
\end{align*}
\label{thm:jdt_and_knuth_equiv}
\end{theorem}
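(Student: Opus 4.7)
The plan is to establish both directions of the biconditional separately. For the forward direction, I would show that a single elementary jeu-de-taquin slide alters the column reading word only by a sequence of (weak) K-Knuth relations. For the backward direction, I would show that each generating (weak) K-Knuth relation applied to the reading word of a tableau can be realized, up to jdt equivalence, by performing slides. Both directions are local in character: every move on either side touches only a small window of cells or letters, so the theorem reduces to carefully matching these local pictures against each other.

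To execute the forward direction, recall that $\jdt_{\w/\w'}$ is defined as the composition of elementary swaps $\swap_{a,\bullet}$ over $a$ in increasing order. It therefore suffices to track the effect of each single $\swap_{a,\bullet}$ on $\rhow(\T)$ and show that the resulting change in the reading word lies within the appropriate Knuth equivalence class. Since a swap only modifies cells adjacent to the traveling $\bullet$s, this becomes a bounded case analysis on local configurations of $\bullet$s and $a$-labeled cells. For unshifted Ferrers shapes, each local change corresponds to one of the K-Knuth moves $aba \sim bab$, $aa \sim a$, $bac \sim bca$, or $cab \sim acb$, accounting respectively for braid-like exchanges, the ``everybody wins'' rule for ties, and ordinary cell permutations. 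For shifted Ferrers shapes, the same analysis applies off the diagonal, while swaps involving cells on or adjacent to the staircase diagonal require the extra weak K-Knuth relation $ab\mathbf{u} \sim ba\mathbf{u}$ to accommodate the asymmetric way a $\bullet$ can traverse the diagonal.

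For the backward direction, the cleanest route is to leverage rectification as a canonical form. Since the minimal tableau of a straight shape in a minuscule poset is a URT by \Cref{thm:urt}, every jdt equivalence class of skew tableaux has a distinguished rectification representative, and the problem becomes to show that two tableaux with (weakly) K-Knuth equivalent reading words rectify to the same straight tableau. Equivalently, one exhibits, for each generating K-Knuth or weak K-Knuth relation applied to $\rhow(\T)$, an auxiliary skew shape and a slide whose net effect on the reading word is exactly that relation. The hard part will be verifying confluence in the shifted case: one must argue that the weak K-Knuth relation together with the K-Knuth relations generates \emph{precisely} the equivalences realizable by slides on shifted shapes---no more, no less. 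This demands a careful induction on shape size, enumerating the possible diagonal configurations to confirm that no further ``sneaky'' equivalences beyond $ab\mathbf{u} \sim ba\mathbf{u}$ need be added, while checking that no relation overreaches by identifying tableaux with distinct rectifications.
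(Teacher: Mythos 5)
The statement you are trying to prove is not actually proven in this paper --- it is a cited result, attributed to Theorems~6.2 and~7.8 of Buch and Samuel's \emph{K-theory of minuscule varieties}. The authors take it as a black box. So your proposal should be read as an attempt to re-derive the Buch--Samuel theorem from scratch, and judged on those terms.

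Your forward direction (a \K-jeu-de-taquin slide changes $\rhow(\T)$ only within its \K-Knuth class) is the right idea and is indeed a local case analysis; this is roughly how the cited proof handles it. But your backward direction contains a genuine gap. You propose to ``exhibit, for each generating \K-Knuth or weak \K-Knuth relation applied to $\rhow(\T)$, an auxiliary skew shape and a slide whose net effect on the reading word is exactly that relation.'' This plan is not well-posed. First, a \K-Knuth move applied to $\rhow(\T)$ very often produces a word that is not the column reading word of \emph{any} increasing tableau of any shape, so there is nothing for a slide to land on. Second, the relation $aa \sim a$ strictly shortens the word, so it certainly cannot be matched by a slide on a skew tableau of fixed size; there is no fixed-shape tableau realizing both sides. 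More fundamentally, realizing each generator as a slide is an argument in the \emph{forward} direction (slides refine \K-Knuth equivalence), and contributes nothing to showing that \K-Knuth equivalence of reading words \emph{implies} jdt equivalence of tableaux. Buch and Samuel's actual argument for the hard direction runs through Hecke (\K-theoretic) insertion --- establishing that the insertion tableau is a complete invariant of the \K-Knuth class of a word, and that the insertion of $\rhow(\T)$ coincides with the \K-rectification of $\T$ --- rather than through any generator-by-generator realization. Your appeal to the URT theorem (\Cref{thm:urt}) is not circular, since in Buch--Samuel that result is established before and independently of Theorems~6.2 and~7.8, but it only gives you a canonical rectification; it does not by itself connect the canonical rectification to the \K-Knuth class, which is precisely the content you still owe.
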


The following proposition records two facts about \K-Knuth equivalence for use in~\Cref{sec:b_rect_and_trap}.

\begin{proposition}[{\cite[Theorem~6.1]{thomas2009jeu}} and {\cite[Lemma~5.4 and~Corollary~6.8]{buch2014k}}]
\gap
\begin{enumerate}
\item The longest strictly increasing subsequences of \K-Knuth equivalent words have the same length.
\item \label{item:linc} The length of the first row of an increasing Ferrers tableau $\T$ of straight shape is the length of the longest strictly increasing subsequence of $\rhow(\T)$.
\end{enumerate}
\label{prop:kknuth}
\end{proposition}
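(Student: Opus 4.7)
My plan is to prove the two parts essentially independently, with part (2) being a direct structural observation and part (1) reducing to a finite case analysis on the K-Knuth relations.

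For part (2), I would first fix an increasing Ferrers tableau $\T$ of straight shape with $k$ columns; the first row has length exactly $k$. Reading the entries of row one left-to-right gives a strictly increasing sequence $r_1 < r_2 < \cdots < r_k$. Because $\rhow$ reads columns left-to-right and each column bottom-to-top, each $r_i$ (being the topmost, hence smallest, entry of column $i$) occurs as the \emph{last} letter contributed by column $i$. Thus $r_1, \ldots, r_k$ appear in order in $\rhow(\T)$ and form a strictly increasing subsequence of length $k$. For the matching upper bound, I would observe that within any single column the entries strictly increase top-to-bottom, so their contribution to $\rhow(\T)$ is strictly \emph{decreasing}; hence a strictly increasing subsequence of $\rhow(\T)$ can pick at most one letter from each column, giving an upper bound of $k$.

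For part (1), the approach is to check each of the five K-Knuth relations locally. A word is modified only in a window of one, two, or three consecutive letters, so I only need to verify: given a strictly increasing subsequence $S$ of the original word, there is a strictly increasing subsequence $S'$ of the modified word with $|S'| \geq |S|$, where $S$ and $S'$ agree outside the window. This reduces to a finite casework on which letters of the window are used by $S$, together with constraints on what the last letter before the window and the first letter after the window can be.

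The main obstacle, and the part to handle with the most care, will be the braid-type and idempotent relations. For $bac \sim bca$ (with $a < b < c$), the only nontrivial case is when $S$ uses both $a$ and $c$ on the left side: we exchange this for $\{b, c\}$ on the right, which is compatible because anything $< a$ preceding the window is also $< b$. The relation $aba \sim bab$ (with $a < b$) is the trickiest since a subsequence might use two of the three window-letters in either presentation; one checks the available increasing pairs are $\{a,b\}$ on each side, and no increasing triple can use all three repeated letters. The idempotent relation $aa \sim a$ is easy: a strictly increasing subsequence cannot use both copies of $a$ on the left, so removing one copy never decreases the maximum. The plain Knuth moves $bac \sim bca$ and $cab \sim acb$ are symmetric to each other under reversal of the inequality analysis, so handling one handles the other. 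Since these local replacements generate K-Knuth equivalence, the length of the longest strictly increasing subsequence is invariant, completing part (1).
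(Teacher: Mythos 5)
Your proof is correct, but note that the paper does not actually prove \Cref{prop:kknuth}: it is imported wholesale from \cite[Theorem~6.1]{thomas2009jeu} and \cite[Lemma~5.4 and Corollary~6.8]{buch2014k}, so what you have written is a self-contained substitute for those citations rather than a variant of an argument given here. Your part (2) is exactly the standard argument: the top entry of column $i$ is the last letter read from that column, so the first-row entries give a strictly increasing subsequence whose length is the number of columns, while each column contributes a strictly decreasing block to $\rhow(\T)$, capping any strictly increasing subsequence at one letter per column. Your part (1) is also sound, and the observation that makes the finite check painless is the one you implicitly use: since the modified window is contiguous, a strictly increasing subsequence meets it in a set of distinct values (necessarily appearing in increasing order), and compatibility with the letters chosen outside the window depends only on the minimum and maximum of that set; so one need only compare which value-sets are realizable on the two sides of each relation and check that any lost set can be traded for one with larger minimum or smaller maximum, namely $\{a,c\}\rightsquigarrow\{b,c\}$ for $bac\sim bca$ and $\{a,c\}\rightsquigarrow\{a,b\}$ for $acb\sim cab$, both harmless against the surrounding letters. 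Two corrections of emphasis rather than substance: the relation $aba\sim bab$ is in fact the easiest case, since both sides realize exactly the same value-sets $\emptyset,\{a\},\{b\},\{a,b\}$ (and $a(a{+}1)a\sim(a{+}1)a(a{+}1)$ is just its special case $b=a{+}1$, so it needs no separate check); the genuinely nontrivial exchanges occur only in the two Knuth moves. Finally, your analysis uses that $a<b<c$ are \emph{distinct} in those moves, which matches the conventions of \Cref{fig:knuth_rels} and is essential---if equalities were permitted, e.g.\ comparing $121$ with $211$, the longest strictly increasing subsequence length would not be preserved---so your proof establishes the proposition exactly in the form the paper needs for \Cref{prop:B_mult-free}.
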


The \defn{doubling} $\T^D$ of a shifted Ferrers tableau $\T$ is the Ferrers tableau obtained by reflecting $\T$ across the shifted diagonal---note that the shifted diagonal itself is \emph{not} duplicated~\cite[Section 7.1]{buch2014k}.  This construction is illustrated in~\Cref{fig:double}.

\begin{figure}[ht]
\begin{description}
\item[(a)] For $\uu=(3,2,1)_*$, the minimal increasing tableau $\T_\uu^{\text{min}}$ and $(\T_\uu^{\text{min}})^D$:
\[
\begin{ytableau}
1 & 2 & 3\\
\none & 3 & 4\\
\none & \none & 5
\end{ytableau}
\ \ \ \longrightarrow \ \ \ 
\begin{ytableau}
1& 2 & 3\\
2 & 3 & 4\\
3 & 4 & 5\\
\end{ytableau}
\]
\item[(b)] A partially filled skew shape $\widetilde U$ and its doubling $\widetilde \U^D$:
\[
\begin{ytableau}
*(lightgray) \blank &
*(lightgray) \blank & *(lightgray) \blank & 1 & \\
\none &
*(lightgray) \blank & *(lightgray) \blank & 2 & \\
\none & \none  & \blank & 3 & 4
\end{ytableau}
\ \ \ \longrightarrow \ \ \ 
\begin{ytableau}
*(lightgray) \blank & *(lightgray) \blank & *(lightgray) \blank & 1 &\\
*(lightgray) \blank & *(lightgray) \blank & *(lightgray) \blank & 2 &\\
 *(lightgray) \blank & *(lightgray) \blank &  & 3 & *(red) 4\\
*(red) 1 & *(red) 2 & *(red) 3\\
& & 4
\end{ytableau}
\]
\end{description}
\caption{Examples of doubling. In (b), we have marked in {\color{red} red} the strictly increasing subsequence of length at least $k$ from which we derive a contradiction in~\Cref{prop:B_mult-free}.}
\label{fig:double}
\end{figure}

The operation of doubling allows us to relate weak \K-Knuth equivalence to \K-Knuth equivalence.

\begin{proposition}[{\cite[Proposition 7.1]{buch2014k}}]
Let $\T$ and $\U$ be shifted Ferrers tableaux. If $\rhow(\T)$ and $\rhow(\U)$ are weakly \K-Knuth equivalent, then $\rhow(\T^D)$ and $\rhow(\U^D)$ are \K-Knuth equivalent.
\label{prop:doubling}
\end{proposition}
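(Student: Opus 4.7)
The plan is to induct on the length of a chain of weak \K-Knuth moves transforming $\rhow(\T)$ into $\rhow(\U)$, reducing to the case of a single elementary move. Since intermediate words in such a chain need not arise from shifted tableaux, I would first extend the doubling operation from tableaux to arbitrary words by tracking where each letter of $\rhow(\T)$ lands inside $\rhow(\T^D)$: diagonal entries $\T(c,c)$ appear exactly once, while above-diagonal entries $\T(i,j)$ with $i<j$ appear twice---once at the original position and once at the reflected position $(j,i)$. This bookkeeping makes $w \mapsto w^D$ a well-defined operation on arbitrary positive-integer words that agrees with tableau doubling on reading words.

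For each of the ordinary \K-Knuth relations (Knuth, Coxeter-Knuth, braid, and idempotent), the transfer to the doubled word is essentially mechanical: the relation acts on a length-two or length-three substring of $\rhow(\T)$ lying within a single column of $\T$, and this substring corresponds to two disjoint substrings inside $\rhow(\T^D)$---the original and its reflected image across the shifted diagonal. Since these occupy disjoint columns of $\T^D$, two sequential applications of the same \K-Knuth relation realize the move in $\rhow(\T^D)$. Moves involving a diagonal letter of $\T$ transfer directly, as those letters each appear only once in $\rhow(\T^D)$.

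The essential obstacle is the initial commutation $ab\mathbf{u} \sim ba\mathbf{u}$, which \K-Knuth forbids in unrestricted form. The two affected letters are $\T(1,1)$ and either $\T(2,2)$ or $\T(1,2)$ (depending on whether the second row of the shape is nonempty), and they occupy specific positions near the top of the first two columns of $\T^D$. The crux of the proof is to exhibit, explicitly, a sequence of \K-Knuth moves on $\rhow(\T^D)$ that realizes this transposition together with the analogous transposition on any reflected copies of the letters involved. I anticipate using the idempotency relation $aa \sim a$ to introduce transient duplicates at strategic positions---which are then absorbed once the transposition is effected---and the braid relation $aba \sim bab$ to route letters past one another, with the diagonal entries serving as pivots for the entire operation. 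This should reduce to a local case analysis on the top-left $2 \times 2$ sub-tableau of $\T^D$, broken down by whether the underlying shifted shape has a nonempty second row and by the relative ordering of the four entries involved.

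The principal obstacle will be carrying out this local case analysis cleanly and verifying that the composite \K-Knuth sequence produces exactly $\rhow(\U^D)$, rather than some other \K-Knuth-equivalent word that would correspond to a different doubled tableau. Once the initial-commutation lemma is established, the inductive reduction above yields the proposition.
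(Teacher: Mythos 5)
The paper does not prove this result itself; it is cited verbatim from Buch--Samuel \cite[Proposition~7.1]{buch2014k}, where the argument goes through jeu-de-taquin rather than move-by-move manipulation of words: one shows that doubling commutes with shifted $\K$-jeu-de-taquin slides, then invokes the equivalence between (weak) $\K$-Knuth equivalence of reading words and jeu-de-taquin equivalence of tableaux (\Cref{thm:jdt_and_knuth_equiv}) to transfer the hypothesis on $\T,\U$ to a conclusion about $\T^D,\U^D$. Your word-level induction is a genuinely different route, and it has a gap that I do not think can be patched.

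The problem is the very first step, extending doubling to arbitrary words by ``tracking where each letter lands.'' This is not well-defined across a chain of weak $\K$-Knuth moves. The idempotent relation $aa \sim a$ changes the length of the word, so there is no persistent notion of a letter's position; the initial commutation and the Knuth/Coxeter-Knuth moves permute letters, so a letter cannot remember which cell of $\T$ it came from (and hence whether it was diagonal or off-diagonal, which is exactly the data that doubling needs). Concretely, after even one $aa \sim a$ collapse the word is shorter than $|\T|$ and the bijection between letters and cells of $\T$ is destroyed. Thus your doubling map on words is not defined on the intermediate words of the chain, and the induction has nothing to induct on.

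A secondary but still fatal issue is the claim that a $\K$-Knuth move on $\rhow(\T)$ ``corresponds to two disjoint substrings'' of $\rhow(\T^D)$ that can be handled by two independent applications of the same relation. The column reading word reads columns left to right, bottom to top, so a consecutive length-$3$ window in $\rhow(\T)$ routinely straddles a column boundary, and the reflected copies of three entries from one column of $\T$ land in three \emph{different} columns of $\T^D$ and are far from consecutive in $\rhow(\T^D)$. Since $\K$-Knuth relations act only on consecutive substrings, you cannot apply the relation to the reflected letters directly; you would first need a long chain of auxiliary moves to bring them together, and there is no reason such a chain exists or terminates back at $\rhow(\U^D)$. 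The ``introduce transient duplicates via $aa\sim a$ and route with braid moves'' plan for the initial commutation is similarly unsubstantiated. If you want a word-level argument you would need, at minimum, to replace the position-tracking notion of doubling with something canonical (e.g.\ passing through a shifted Hecke insertion tableau), at which point the argument collapses back into the tableau-level/jdt approach that Buch--Samuel actually use.
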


\section{A Bijective Framework for Doppelg\"angers}
\label{sec:results_and_conjectures}

Our bijective framework for doppelg\"angers is based on the equivalence of a product in $K(\GP)$ and a bijection between a set and a multiset of increasing tableaux.  In this section we introduce this framework; \Cref{sec:coincidental,sec:embedding,sec:applications} will then be devoted to specializing~\Cref{thm:main1} to prove~\Cref{thm:main_thm1}. 

%
\begin{theorem}
Suppose $\GP$ is a minuscule variety and $u \leq v \in W^{\mathsf P}$. If
\begin{equation}
[\mathcal{O}_{X_{\check{v}}}] \cdot [\mathcal{O}_{X_u}] = \sum_{\check{x}} C_{\check{v}, u}^{\check{x}} [\mathcal{O}_{X_{\check{x}}}]
,
\label{eq:sym}
\end{equation}
then $C_{\check{v}, u}^{\check{x}} = C_{x,u}^v$ and, for any $\kk \in \mathbb{Z}_{>0}$, $\rect_{\T^{\rm min}_{\sf u}}$ gives a bijection
\[\IT^{[\kk]}({\sf v}/{\sf u}) \simeq \bigsqcup_{{\sf x} \subseteq {\sf v}} \left( \R^{\sf v}_{{\sf x}, {\sf u}} \times \IT^{[\kk]}({\sf x}) \right).\]
\label{thm:main1}
\label{thm:gen_main_thm_Ring}
\end{theorem}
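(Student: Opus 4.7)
The bijection will be obtained directly from the infusion involution of \Cref{thm:infusion_involution}. Given $\U \in \IT^{[\kk]}({\sf v}/{\sf u})$, I form the pair $(\T^{\rm min}_{\sf u}, \U)$ of tableaux of shapes ${\sf u}$ and ${\sf v}/{\sf u}$, respectively, and apply $\Kinf$ to obtain $\Kinf(\T^{\rm min}_{\sf u}, \U) = (\U', \T')$. By the definition of rectification, $\U' = \rect_{\T^{\rm min}_{\sf u}}(\U)$ has some straight shape ${\sf x} \subseteq {\sf v}$ with $\U' \in \IT^{[\kk]}({\sf x})$, while $\T'$ has skew shape ${\sf v}/{\sf x}$. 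The forward map is then $\U \mapsto (\T', \U')$.

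The key step is to show $\T' \in \R^{\sf v}_{{\sf x}, {\sf u}}$, i.e.\ that $\rect(\T') = \T^{\rm min}_{\sf u}$. Applying \Cref{thm:infusion_involution} to the output, $\Kinf(\U', \T') = (\T^{\rm min}_{\sf u}, \U)$, and in particular $\rect_{\U'}(\T') = \T^{\rm min}_{\sf u}$. Since ${\sf u}$ is a straight shape in a minuscule poset, \Cref{thm:urt} says $\T^{\rm min}_{\sf u}$ is a unique rectification target, so $\rect_{\sf S}(\T') = \T^{\rm min}_{\sf u}$ for every ${\sf S} \in \IT({\sf x})$, placing $\T'$ in $\R^{\sf v}_{{\sf x}, {\sf u}}$ unambiguously. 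Conversely, given $(\T', \U') \in \R^{\sf v}_{{\sf x}, {\sf u}} \times \IT^{[\kk]}({\sf x})$, the URT property and the defining condition $\rect(\T') = \T^{\rm min}_{\sf u}$ force the first coordinate of $\Kinf(\U', \T')$ to equal $\T^{\rm min}_{\sf u}$; extracting the second coordinate recovers $\U \in \IT^{[\kk]}({\sf v}/{\sf u})$. The involutivity of $\Kinf$ immediately shows these two maps are mutually inverse, yielding the asserted bijection.

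For the identity $C_{\check{v}, u}^{\check{x}} = C_{x,u}^v$, I would apply the combinatorial rule of \Cref{thm:k_theory_comb_structure} on both sides, reducing it to an equality of cardinalities $|\R^{\check{v}}_{\check{x}, u}| = |\R^{v}_{x, u}|$. The signs match because $|w| + |\check{w}| = \ell(\wo^{\sf P})$ is constant for $w \in W^{\sf P}$, forcing $|\check{x}| - |\check{v}| - |u| = |v| - |x| - |u|$. The equality of cardinalities is a manifestation of the Poincar\'e duality symmetry of the K-theoretic Schubert basis: multiplication by $[\mathcal{O}_{X_u}]$ is self-adjoint with respect to the Euler-characteristic pairing on $K(\GP)$, and the families $\{[\mathcal{O}_{X_w}]\}$ and $\{[\mathcal{O}_{X_{\check{w}}}]\}$ are dual (up to the standard sign twists appearing in K-theoretic Schubert calculus on minuscule $\GP$).

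The principal obstacle will be justifying this last symmetry in a clean, self-contained way. A purely combinatorial proof would require exhibiting an explicit bijection $\R^v_{x, u} \to \R^{\check{v}}_{\check{x}, u}$ compatible with rectification, most naturally through the anti-automorphism of $\Lambda_{\GP}$ induced by the action of $\wo$ on the minuscule weight orbit, together with an evacuation-style reversal of the tableau alphabet. Verifying that this composite interacts correctly with K-theoretic jeu-de-taquin and preserves the condition of rectifying to $\T^{\rm min}_{\sf u}$ is where I expect the delicate bookkeeping to lie; everything else in the theorem is a direct consequence of the infusion involution and the URT property.
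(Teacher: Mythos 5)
Your construction of the bijection is exactly the paper's own argument: apply $\Kinf$ to the pair $(\T^{\rm min}_{\sf u},\U)$, use involutivity of infusion (\Cref{thm:infusion_involution}) to see that the residual skew tableau rectifies to $\T^{\rm min}_{\sf u}$, invoke the URT property (\Cref{thm:urt}) so that membership in $\R^{\sf v}_{{\sf x},{\sf u}}$ is unambiguous, and get injectivity and surjectivity from the involution. That half needs no changes.

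The gap is the claim $C_{\check{v},u}^{\check{x}} = C_{x,u}^{v}$, which the paper settles in one line by citing the known $\mathfrak{S}_3$-symmetry of \K-theoretic Littlewood--Richardson coefficients for minuscule $\GP$ (cf.~\cite{knutson2010KT}), but which you try to extract from Poincar\'e duality of the structure-sheaf basis. As stated, that duality is false: under the Euler-characteristic pairing, $\langle[\mathcal{O}_{X_w}],[\mathcal{O}_{X^{v}}]\rangle = \chi(\mathcal{O}_{X_w\cap X^{v}})$ equals $1$ whenever the Richardson variety is nonempty (Richardson varieties have rational singularities), so the Gram matrix of the two families $\{[\mathcal{O}_{X_w}]\}$ and $\{[\mathcal{O}_{X_{\check{w}}}]\}$ is unitriangular rather than a permutation matrix; the genuine dual basis consists of the boundary ideal-sheaf classes $[\mathcal{O}_{X_{\check{w}}}(-\partial X_{\check{w}})]$. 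Hence ``self-adjointness of multiplication by $[\mathcal{O}_{X_u}]$ plus duality of the two Schubert bases'' does not yield the identity; one must either track the alternating corrections coming from the ideal-sheaf basis or prove $|\R^{\check{v}}_{\check{x},u}| = |\R^{v}_{x,u}|$ directly via the evacuation-style symmetry you allude to --- and that is precisely the nontrivial content of the cited $\mathfrak{S}_3$-symmetry, not bookkeeping. Your sign computation is fine, but this part of the theorem is not actually proved in your proposal.
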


The idea behind the proof of~\Cref{thm:main1} is illustrated in~\Cref{fig:bijections}.

\begin{figure}[htbp]
\[\raisebox{-0.5\height}{\begin{tikzpicture}
\draw[clip, preaction={draw, ultra thick, double distance=0pt}]
  plot[smooth cycle]
  coordinates{
   (0,0) (-1,2) (0,4) (1,2)
  };
\draw[thick, draw, fill=blue]  plot [smooth cycle] coordinates {(0,0) (-1.5,2) (-1,2.1) (-.7,2.2) (-.4,2.1) (.7,1.9) (1,1.3)};
\draw[thick, draw, fill=red]  plot [smooth cycle] coordinates {(0,0) (-1,1.3) (-.5,1) (.3,1.4) (.7,1) (1,.5)};
\node at (0,.6) {${\sf x}$};
\node at (0,1.7) {${\sf U}$};
\end{tikzpicture}} 
\xleftrightarrow{{\sf Kinfusion}}
\raisebox{-0.5\height}{\begin{tikzpicture}
\draw[clip, preaction={draw, ultra thick, double distance=0pt}]
  plot[smooth cycle]
  coordinates{
   (0,0) (-1,2) (0,4) (1,2)
  };
\draw[thick, draw, fill=red]  plot [smooth cycle] coordinates {(0,0) (-1.5,2) (-1,2.1) (-.7,2.2) (-.4,2.1) (.7,1.9) (1,1.3)};
\draw[thick, draw, fill=blue]  plot [smooth cycle] coordinates {(0,0) (-1,1.5) (-.5,1.3) (.3,1) (.7,.6) (1,.7)};
\draw[thick, draw]  plot [smooth] coordinates {(-1,1.3) (-.5,1) (.3,1.4) (.7,1) (1,.5)};
\node at (0,.6) {${\sf T}_{\sf u}^{\rm min}$};
\node at (0,1.7) {${\sf v} /{\sf u}$};
\end{tikzpicture}}
\]
\caption{An illustration of the idea (and notation) behind the bijections of~\Cref{thm:main1}.  To conclude~\Cref{thm:main_thm1} from~\Cref{thm:main1}, we must show that $\x=\chi(\Phi^+_{\Y})$ and that $\U$ is unique.}
\label{fig:bijections}
\end{figure}

\begin{proof}
We first note that $C_{\check{v}, u}^{\check{x}} = C_{x, u}^{v}$ by the $\mathfrak{S}_3$-symmetry of \K-theoretic Littlewood-Richardson coefficients for minuscule varieties (cf.~\cite{knutson2010KT}).

Consider $\T' \in \IT^{[\kk]}({\sf v}/{\sf u})$. We have \[\Kinf(\T^{\rm min}_{\sf u},\T') = (\T,\U),\] for some ${\sf x} \subseteq {\sf v}$, some 
$\T \in \IT^{[\kk]}({\sf x})$ and some $\U \in \IT^{[\kk]}({\sf v}/{\sf x})$. Since $\rect_{\T}({\sf U})={\sf T}^{\rm min}_{\sf u}$, we have $\U \in \R^{\sf v}_{{\sf x}, {\sf u}}$. 
But infusion is an involution by~\Cref{thm:infusion_involution}, so that $\rect_{\T^{\rm min}_{\sf u}}$ is an injection \[\IT^{[\kk]}({\sf v}/{\sf u}) \hookrightarrow \bigsqcup_{{\sf x} \subseteq {\sf v}} \left( \R^{\sf v}_{{\sf x}, {\sf u}} \times \IT^{[\kk]}({\sf x}) \right).\]

Conversely, for ${\sf x} \subseteq {\sf v}$, $\U \in \R^{\sf v}_{{\sf x}, {\sf u}}$ and $\T \in \IT^{[\kk]}({\sf x})$, we have
\[\Kinf(\T,\U) = (\T^{\rm min}_{\sf u},\T') \in \IT({\sf u}) \times \IT^{[\kk]}({\sf v}/{\sf u}),\] for a unique $\T'$.
This shows that $\rect_{\T^{\rm min}_{\sf u}}$ is also surjective, and hence bijective.
\end{proof}

In the special case when the product in $K(\GP)$ is multiplicity-free, so that each $\R^{\sf v}_{{\sf x}, {\sf u}}$ only has one element, \Cref{thm:main1} specializes to a bijective statement involving only sets of increasing tableaux.  This is illustrated in~\Cref{ex:ex3} and~\Cref{fig:bijections}.

\begin{theorem} Fix $\kk \in \mathbb{Z}_{>0}$ a positive integer and ${\sf u} \subseteq {\sf v}$ order ideals in a minuscule poset. Suppose that the product $[\mathcal{O}_{X_{\check{v}}}] \cdot [\mathcal{O}_{X_u}]$ is multiplicity-free or equivalently that $|\R^{\sf v}_{{\sf x}, {\sf u}}| \leq 1$ for every ${\sf x} \subseteq {\sf v}$.  Then $\rect_{\T^{\rm min}_{\sf u}}$ gives a bijection 
\[\IT^{[\kk]}({\sf v}/{\sf u}) \simeq \bigsqcup_{{\sf x} : C^{\sf v}_{{\sf x}, {\sf u}} \neq 0}  \IT^{[\kk]}({\sf x}).\]

Let $\p = \kk - \hgt({\sf v}/{\sf u})$.  If all maximal chains of ${\sf v}/{\sf u}$ are of equal length, and the same is true for each ${\sf x}$ with $C^{\sf v}_{{\sf x}, {\sf u}} \neq 0$, then there is a bijection 
\[\PP^{[\p]}({\sf v}/{\sf u}) \simeq \bigsqcup_{{\sf x} :  C^{\sf v}_{{\sf x}, {\sf u}} \neq 0} \PP^{[\p + \hgt({\sf v}/{\sf u}) - \hgt({\sf x})]}({\sf x}).\]
\label{thm:gen_main_thm}
\end{theorem}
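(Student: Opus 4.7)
The plan is to derive this theorem directly as a corollary of \Cref{thm:gen_main_thm_Ring}, together with \Cref{prop:bij_PP_IT}. The multiplicity-free hypothesis is what lets the general bijective framework of \Cref{thm:gen_main_thm_Ring} collapse from a disjoint union over Cartesian products to a disjoint union over straight shapes.

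First, I would verify the claimed equivalence between the two formulations of the multiplicity-free hypothesis. By the proof of \Cref{thm:gen_main_thm_Ring} (via the $\mathfrak{S}_3$-symmetry of minuscule \K-theoretic Littlewood-Richardson coefficients), we have $C^{\check{x}}_{\check{v},u} = C^v_{x,u}$. Then by the Buch-Samuel rule (\Cref{thm:k_theory_comb_structure}), $|\R^{\sf v}_{{\sf x},{\sf u}}| = |C^v_{x,u}|$, so multiplicity-freeness of the product $[\mathcal{O}_{X_{\check v}}]\cdot[\mathcal{O}_{X_u}]$ in $K(\GP)$ is exactly the condition that $|\R^{\sf v}_{{\sf x},{\sf u}}| \leq 1$ for every ${\sf x} \subseteq {\sf v}$.

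Next, I would invoke \Cref{thm:gen_main_thm_Ring} to obtain the bijection
\[\IT^{[\kk]}({\sf v}/{\sf u}) \simeq \bigsqcup_{{\sf x} \subseteq {\sf v}} \bigl(\R^{\sf v}_{{\sf x},{\sf u}} \times \IT^{[\kk]}({\sf x})\bigr).\]
Under the multiplicity-free hypothesis, each factor $\R^{\sf v}_{{\sf x},{\sf u}}$ is either empty (precisely when $C^{\sf v}_{{\sf x},{\sf u}} = 0$) or a singleton (precisely when $C^{\sf v}_{{\sf x},{\sf u}} \neq 0$). In the former case the Cartesian product contributes nothing, and in the latter case $\R^{\sf v}_{{\sf x},{\sf u}} \times \IT^{[\kk]}({\sf x})$ is canonically identified with $\IT^{[\kk]}({\sf x})$. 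Restricting the disjoint union to indices with $C^{\sf v}_{{\sf x},{\sf u}} \neq 0$ then yields the first claimed bijection.

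For the plane partition statement, I would apply \Cref{prop:bij_PP_IT} on both sides. Setting $\kk = \p + \hgt({\sf v}/{\sf u})$, the chain condition on ${\sf v}/{\sf u}$ gives $\PP^{[\p]}({\sf v}/{\sf u}) \simeq \IT^{[\kk]}({\sf v}/{\sf u})$. On the other side, for each ${\sf x}$ with $C^{\sf v}_{{\sf x},{\sf u}} \neq 0$, the chain condition on ${\sf x}$ gives $\IT^{[\kk]}({\sf x}) \simeq \PP^{[\kk - \hgt({\sf x})]}({\sf x}) = \PP^{[\p + \hgt({\sf v}/{\sf u}) - \hgt({\sf x})]}({\sf x})$. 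Chaining these two identifications through the tableau bijection from the previous paragraph yields the second claim.

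There is no real obstacle here: both statements are bookkeeping consequences of \Cref{thm:gen_main_thm_Ring}, with the slight subtlety being the translation between increasing-tableau heights and plane-partition heights, which is uniform across all ranked factors because every ${\sf x}$ appearing with nonzero coefficient is assumed to have all maximal chains of equal length. All nontrivial combinatorial content—namely, that $\rect_{\T^{\rm min}_{\sf u}}$ furnishes an honest bijection via the infusion involution—has been established in the proof of \Cref{thm:gen_main_thm_Ring}.
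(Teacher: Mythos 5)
Your proof is correct and takes the same route as the paper: the first bijection follows from \Cref{thm:gen_main_thm_Ring} together with the observation that under the multiplicity-free hypothesis each $\R^{\sf v}_{{\sf x},{\sf u}}$ is empty or a singleton (with emptiness exactly when $C^{\sf v}_{{\sf x},{\sf u}}=0$), and the plane-partition version then follows by applying \Cref{prop:bij_PP_IT} on both sides. You have simply spelled out the bookkeeping that the paper compresses into its one-sentence proof.
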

\begin{proof}
The first statement is immediate from \Cref{thm:main1}, and the second statement follows from the first by \Cref{prop:bij_PP_IT}.
\end{proof}

Specializing further to the case when the product is multiplicity-free with a single term, we obtain the following geometric corollary.
\begin{corollary}
In a minuscule variety $\GP$, if the Richardson variety $X_u^v$ is rationally equivalent to the opposite Schubert variety $X^x$ (or, equivalently, if both represent the same class in the Chow ring $A^\star(\GP)$), then there is a bijection \[\IT^{[\kk]}({\sf v}/{\sf u}) \simeq \IT^{[\kk]}({\sf x}).\]
\end{corollary}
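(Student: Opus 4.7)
The plan is to translate the geometric hypothesis into a single-term multiplicity-free product in $K$-theory, then invoke \Cref{thm:gen_main_thm}. First, recall that the class of the Richardson variety in the Chow ring decomposes as a product of Schubert classes: $[X_u^v] = \sigma_u \cdot \sigma_{\check v}$ (see \Cref{sec:ring}). The rational equivalence $[X_u^v] = [X^x] = \sigma_{\check x}$ therefore says precisely that $\sigma_u \cdot \sigma_{\check v} = \sigma_{\check x}$ in $H^\star(\GP, \ZZ)$, i.e., the cohomological product is multiplicity-free and consists of a single Schubert class.

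Next, I would promote this cohomological identity to the analogous identity in $K(\GP)$. This is exactly the content of \Cref{rem:notnecessarily}: whenever $\sigma_w \cdot \sigma_u = \sigma_v$ is a single-term expansion in cohomology, the corresponding product of structure sheaf classes in $K$-theory is forced to equal the single term $[\mathcal{O}_{X_v}]$, since $C_{w,u}^y$ can only be nonzero for $y \geq v$ (by \Cref{thm:knutson_mult_free}) but degree considerations on the leading term rule out any such additional contributions when the cohomological product has only one summand. Applied here, we obtain
\[
[\mathcal{O}_{X_u}] \cdot [\mathcal{O}_{X_{\check v}}] = [\mathcal{O}_{X_{\check x}}]
\]
in $K(\GP)$; equivalently, $C^{\sf v}_{{\sf y},{\sf u}} = \delta_{{\sf y},{\sf x}}$ for all order ideals ${\sf y} \subseteq {\sf v}$.

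Finally, I would feed this into the multiplicity-free statement of \Cref{thm:gen_main_thm}. Since the $K$-theoretic expansion has the unique nonzero term indexed by ${\sf x}$, the disjoint union on the right-hand side of the bijection collapses to a single piece, yielding
\[
\IT^{[\kk]}({\sf v}/{\sf u}) \;\simeq\; \IT^{[\kk]}({\sf x})
\]
with the bijection realized explicitly by $\rect_{\T^{\mathrm{min}}_{\sf u}}$ (equivalently, by the infusion involution against the minimal tableau of shape ${\sf u}$). The only non-routine step is the cohomology-to-$K$-theory lift; once one accepts \Cref{rem:notnecessarily}, everything else is bookkeeping against \Cref{thm:gen_main_thm}. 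The reason the lift works in the single-term case but not in general multiplicity-free cases is that Knutson's formula $\hat\mu_\Po$ automatically evaluates to $0$ on every element strictly above a unique minimum—so no lower-order $K$-theoretic corrections can appear.
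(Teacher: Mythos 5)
Your proposal is correct and follows essentially the same route as the paper: read the rational equivalence as the single-term expansion $\sigma_{\check v}\cdot\sigma_u=\sigma_{\check x}$ in $H^\star(\GP)$, lift it to $[\mathcal{O}_{X_{\check v}}]\cdot[\mathcal{O}_{X_u}]=[\mathcal{O}_{X_{\check x}}]$ via \Cref{rem:notnecessarily}, and then apply \Cref{thm:gen_main_thm}. Your closing observation that $\hat\mu_\Po$ vanishes on every element strictly above the unique minimum is a correct (and slightly more explicit) justification of the lift than the paper's appeal to \Cref{rem:notnecessarily}, but it is the same argument in substance.
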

\begin{proof}
In this case, the product $\sigma_{\check{v}} \cdot \sigma_u$ expands in the 
Schubert basis of $H^\star(\GP)$ as the single term $\sigma_{\check{x}}$. Hence, by \Cref{rem:notnecessarily}, we also have 
\[
[\mathcal{O}_{X_{\check{v}}}] \cdot [\mathcal{O}_{X_u}] =  [\mathcal{O}_{X_{\check{x}}}].
\]
Now, conclude by \Cref{thm:gen_main_thm}.
\end{proof}

We finally may restrict~\Cref{thm:gen_main_thm} to the cohomology ring $H^\star(\GP)$, which gives bijections of standard tableaux.

\begin{corollary}
	Fix $\kk \in \mathbb{Z}_{>0}$ a positive integer and ${\sf u} \subseteq {\sf v}$ order ideals in a minuscule poset. Suppose $|\R^{\sf v}_{{\sf x}, {\sf u}}| \leq 1$ for every ${\sf x} \subseteq {\sf v}$.  Then $\rect_{\T^{\rm min}_{\sf u}}$ restricts to a bijection 
\[\SYT({\sf v}/{\sf u}) \simeq \bigsqcup_{{\sf x} : C^{\sf v}_{{\sf x}, {\sf u}} \neq 0}  \SYT({\sf x}).\]
\end{corollary}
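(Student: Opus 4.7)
The plan is to apply \Cref{thm:gen_main_thm} with the specific choice $\kk := n = |\vv/\uu|$, which yields the bijection
\[
\rect_{\T^{\rm min}_\uu} : \IT^{[n]}(\vv/\uu) \xrightarrow{\sim} \bigsqcup_{\x : C^\vv_{\x,\uu} \neq 0} \IT^{[n]}(\x),
\]
and then to restrict it to standard tableaux on each side. For any shape $\mathbf{y}$, the standard tableaux $\SYT(\mathbf{y})$ sit inside $\IT^{[n]}(\mathbf{y})$ as precisely those increasing tableaux whose label multiset is $[|\mathbf{y}|]$, so $\SYT(\mathbf{y})$ is nonempty inside $\IT^{[n]}(\mathbf{y})$ only when $|\mathbf{y}| \leq n$. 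It therefore suffices to establish: if $\T' \in \SYT(\vv/\uu)$, then $\rect_{\T^{\rm min}_\uu}(\T') \in \SYT(\x)$ for some $\x$ with $|\x| = n$. The reverse direction follows by symmetry, since infusion is an involution (\Cref{thm:infusion_involution}).

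First I would verify an auxiliary lemma: each elementary swap $\swap_{a,b}$ preserves the \emph{set} of labels appearing in the tableau, though not necessarily their multiplicities. This follows from a direct case analysis of the swap rules, exploiting the symmetry of the adjacency condition in $a$ and $b$: a label $a$ can vanish after a swap only if it is simultaneously regenerated from adjacent $b$-positions. Iterating, $\rect_{\T^{\rm min}_\uu}(\T')$ uses every label in $\{1, \ldots, n\}$ at least once, so its shape $\x$ satisfies $|\x| \geq n$.

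The crux is to upgrade this inequality to $|\x| = n$, equivalently that each label in $[n]$ appears exactly once in the image. Here I would invoke the \K-Knuth machinery of \Cref{sec:kknuth}. By \Cref{thm:jdt_and_knuth_equiv}, $\rhow(\T')$ and $\rhow(\rect_{\T^{\rm min}_\uu}(\T'))$ are weakly \K-Knuth equivalent; in the shifted setting one first applies the doubling construction of \Cref{prop:doubling} to reduce to ordinary \K-Knuth equivalence. Now $\rhow(\T')$ is a permutation of $[n]$ (it has no repeated letter). Since the only length-reducing \K-Knuth move is $aa \sim a$, which does not apply to words with distinct letters, any word \K-Knuth equivalent to a permutation has length at least $n$, with equality realized precisely by permutations. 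The reading word of a straight-shape increasing tableau realizes this minimum in its class (via \Cref{prop:kknuth}), so $\rhow(\rect_{\T^{\rm min}_\uu}(\T'))$ is itself a permutation of $[n]$, forcing $|\x| = n$ and the standardness of the image.

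The main obstacle will be making the minimum-length argument rigorous. Although individual \K-theoretic slides can transiently duplicate entries via the ``everybody wins'' rule, one must show such duplications cannot persist all the way to the straight-shape output when the initial skew tableau is standard. The essential tools are \Cref{prop:kknuth}, which ties the \K-Knuth class to the first-row length of any straight-shape increasing representative, and---in the shifted cases arising in \Cref{thm:main_thm1}---\Cref{prop:doubling}, which bridges weak and ordinary \K-Knuth equivalence via doubling.
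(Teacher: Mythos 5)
Your proposal targets the right claim---that $\rect_{\T^{\rm min}_{\sf u}}$ carries a standard skew tableau to a standard straight tableau of the same size---but the route you take is much longer than the paper's one-line proof (``the bijection in \Cref{thm:gen_main_thm} obviously restricts to the set of standard tableaux''), and it contains a genuine gap.

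The gap is the assertion that ``the reading word of a straight-shape increasing tableau realizes this minimum in its class (via \Cref{prop:kknuth}).'' \Cref{prop:kknuth} does not say this: it controls the length of the \emph{first row} of a straight increasing Ferrers tableau by the longest strictly increasing subsequence of its reading word, together with invariance of that subsequence length under \K-Knuth moves. Neither piece bounds the \emph{total} length of the reading word. Worse, the general statement is false: a single (weak) \K-Knuth class can contain the reading words of several straight-shape increasing tableaux of different sizes---this is exactly the failure of well-definedness that the notion of a \emph{unique rectification target} (\Cref{thm:urt}) is designed to circumvent. Your ``set-of-labels is preserved'' lemma correctly yields $|\x| \geq n$, but promoting this to $|\x| = n$ requires a different ingredient than the one you cite.

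The argument the paper has in mind bypasses the \K-Knuth machinery entirely. For a \emph{single}-$\bullet$ $\jdt$ slide applied to a standard tableau, the ``everybody wins'' tie-breaking rule never fires (no value occurs twice), so the slide is an ordinary jeu-de-taquin slide, which visibly preserves standardness. One then arranges the rectification so that every slide is single-$\bullet$ by using a standard rectification order on $\uu$, and invokes the fact (built into \Cref{thm:coefficients_LR} and into the unique-rectification theory of \cite{thomas2009jeu, buch2014k}) that the resulting straight tableau is independent of which such order was used and coincides with the classical Sch\"utzenberger/Haiman rectification. Surjectivity onto the standard tableaux on the right-hand side follows from the same observation applied to the inverse, the infusion involution. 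This is why the paper treats the restriction as immediate; your K-Knuth detour is not only longer, but as written it does not close.
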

\begin{proof}
	The bijection in~\Cref{thm:gen_main_thm} obviously restricts to the set of standard tableaux.
\end{proof}

\section{Coincidental Root Posets}
\label{sec:coincidental}

In this section, we develop the posets $\Phi^+_Y$ appearing in \Cref{thm:main_thm1}.

\begin{definition}
We call the Coxeter-Cartan types $A_n$, $B_n$, $H_3$, and $I_2(m)$ the \defn{coincidental types}.
\end{definition}

A.~Miller observed that these are exactly those types for which the degrees $d_1<d_2<\cdots<d_n$ of the Coxeter group $W$ form an arithmetic sequence~\cite{miller2015foulkes}.  The coincidental types have many remarkable properties, and many enumerative questions are  ``more uniform'' when restricted from all Coxeter-Cartan types to just the coincidental types.  Such enumerative results include:

\begin{itemize}
    \item the number of $k$-dimensional faces of the generalized cluster complex~\cite{fomin2005generalized};
    \item the number of saturated chains of length $k$ in the noncrossing partition lattice~\cite{reading2008chains};
    \item the number of reduced words for $\wo$~\cite{stanley1984number,edelman1987balanced,haiman1992dual,williams13cataland};
    \item the number of multitriangulations~\cite{ceballos2014subword}; and
    \item the Coxeter-biCatalan numbers~\cite{barnardreading}.
\end{itemize}

\subsection{Crystallographic Root Posets}
Since the root system of type $B_n$ is crystallographic, its root poset is defined by~\Cref{eq:root_poset}.  Examples are given in~\Cref{fig:names}; $\Phi^+_{B_n}$ is a \defn{shifted double staircase} when drawn as a shifted Ferrers shape. 

More generally, using the conventions of~\Cref{fig:min_classification}, let \[b_{k,n}:=(s_1s_2\cdots s_{n-k})^{k} \in W(B_{n-k})\] for $n\geq 2k$.    The poset $\trap{k}{n}$ is \[\trap{k}{n}:=\Phi^+_{B_{n-k}} \cap {\sf b}_{k,n},\]
  the restriction of $\Phi^+_{B_{n-k}}$ to the roots that are inversions of the Weyl group element $b_{k,n}$.  As a special case, since $b_{n,2n}=\wo \in W(B_{n})$, we have $\Phi^+_{B_n}=\trap{n}{2n}$.  When drawn as a shifted Ferrers shape, $\trap{k}{n}$ may be described as the \defn{shifted trapezoid}, with corresponding strict integer partition $(n-1,n-3,\ldots,n-2k+1)_*$.

In general, for $Y \in \{ B_{k,n}, H_3, I_2(2n) \}$, we will write \begin{equation}\wo(\Y):=\begin{cases} \wo \in W(\Y) & \text{if } \Y \in \{H_3,I_2(m)\} \\ b_{k,n} \in W(B_{n-k}) & \text{if }\Y=B_{k,n} \end{cases}.\label{eq:long_element}\end{equation}

\subsection{Non-Crystallographic Root Posets}\label{sec:noncrystroot}
It remains to construct ``root posets'' in the noncrystallographic types $H_3$ and $I_2(m)$ for $m\neq 2,3,4,6$.  In crystallographic types, it is a remarkable fact (uniformly proven by B.~Kostant~\cite{kostant1959principal}) that the sizes of the ranks of $\Phi^+$ and the degrees $d_1,d_2,\ldots,d_n$ of $W$ form conjugate partitions under the identity~\cite[Theorem 3.20]{humphreys1992reflection}
\begin{equation}\left|\big\{ \alpha \in \Phi^+ : \hgt(\alpha)=i \big\}\right| = \left|\big\{ j : d_j > i \big\} \right|.\label{eq:heights}\end{equation}

The obvious application of~\Cref{eq:root_poset} does not yield a root poset satisfying this condition in the non-crystallographic types.  For example, if $\phi:=\frac{1+\sqrt{5}}{2}$, then in the basis of simple roots, \[\Phi^+_{I_2(5)}=\left\{(1,0),(0,1),\left(\phi,1\right),\left(1,\phi\right),\left(\phi,\phi\right)\right\},\] which would be ordered by~\Cref{eq:root_poset} to have Hasse diagram $\raisebox{-0.5\height}{\begin{tikzpicture}[scale=.2]
    \draw[thick] (0 cm,0) -- (0 cm,1.5) -- (1cm,2.8);
    \draw[thick] (2 cm,0) -- (2 cm,1.5) -- (1cm,2.8);
    \draw[thick] (0 cm,0) -- (2 cm,1.5);
    \draw[thick] (2 cm,0) -- (0 cm,1.5);
    \draw[ thick,solid,fill=white] (0cm,0) circle (.5cm);
    \draw[ thick,solid,fill=white] (2cm,0) circle (.5cm);
    \draw[ thick,solid,fill=white] (0cm,1.5) circle (.5cm);
    \draw[ thick,solid,fill=white] (2cm,1.5) circle (.5cm);
    \draw[ thick,solid,fill=white] (1cm,2.8) circle (.5cm);
  \end{tikzpicture}}$, so that it has two elements of rank one, two elements of rank two, and one element of rank three.    On the other hand, since the degrees of $I_2(5)$ are $2$ and $5$,~\Cref{eq:heights} predicts two elements of rank one, and one element for each rank greater than one.

On the basis of~\Cref{eq:heights} and a few other criteria from Coxeter-Catalan combinatorics, D.~Armstrong constructed
surrogate root posets in types $H_3$ and $I_2(m)$ with desirable behavior~\cite{armstrong2009generalized}. For more details, see~\cite[Section 3]{cuntz2015root} (which includes a construction of $\Phi^+_{H_3}$ using a folding argument).
\medskip

We will construct these root posets using the fully commutative theory reviewed in~\Cref{sec:minuscule}, and refer the reader to the noncrystallographic part of~\Cref{fig:min_classification} for the labeling conventions of the Coxeter-Dynkin diagram.  For convenience, we now work with reflections instead of roots.

\subsubsection{$I_2(m)$}
\label{sec:root_poset_dihedral}
In type $I_2(m)$, the root poset is a natural generalization of the root posets for the crystallographic
dihedral types $A_1 \times A_1$, $A_2$, $B_2$, and $G_2$.

The Coxeter group $W=W(I_2(m))$ has two generators, $s=s_1$ and $t=s_2$.  $W$ has a fully-commutative maximal parabolic quotient $W^J$, where $J=\{t\}$.  The longest element of $W^J$ has one reduced word: $\woJ=\underbrace{sts\cdots}_{m-1 \text{ letters}}.$  The heap for $\woJ$ is therefore a chain of length $m-1$, whose vertices are canonically labeled by the reflections coming from the corresponding letter of the word for $\woJ$: $s,sts,ststs,\ldots$.  This leaves only the reflection $t$ unspecified---but in order for the full poset to satisfy~\Cref{eq:heights}, we conclude that $sts$ covers $t$. See \Cref{fig:names} for an example.

\subsubsection{$H_3$}
\label{sec:h3}

We note that $W(I_2(5))$ is the maximal parabolic subgroup of $W=W(H_3)$ generated by $J=\{s_1,s_2\}$.  By the previous section, we therefore obtain the root poset of the parabolic subgroup $W(I_2(5))$, which ought to be the restriction of the full root poset of $H_3$ to that parabolic subgroup.  Now the parabolic quotient $W^J=W(H_3)/W(I_2(5))$ has a maximal element that is fully commutative (see the classification in~\Cref{fig:min_classification}). This fact allows us to canonically label the heap of \[\woJ=s_3s_2s_1s_2s_1s_3s_2s_1s_2s_3\] by the corresponding reflections.  In order to satisfy~\Cref{eq:heights}, we let $t \lessdot s_3ts_3$ for all reflections $t\in W_J$ such that $s_3ts_3 \neq t$.  This poset is illustrated in~\Cref{fig:h3rootposet}, and has been shown in~\cite{cuntz2015root} to be the unique poset satisfying a list of six natural conditions.

\begin{figure}
\includegraphics[height=2in]{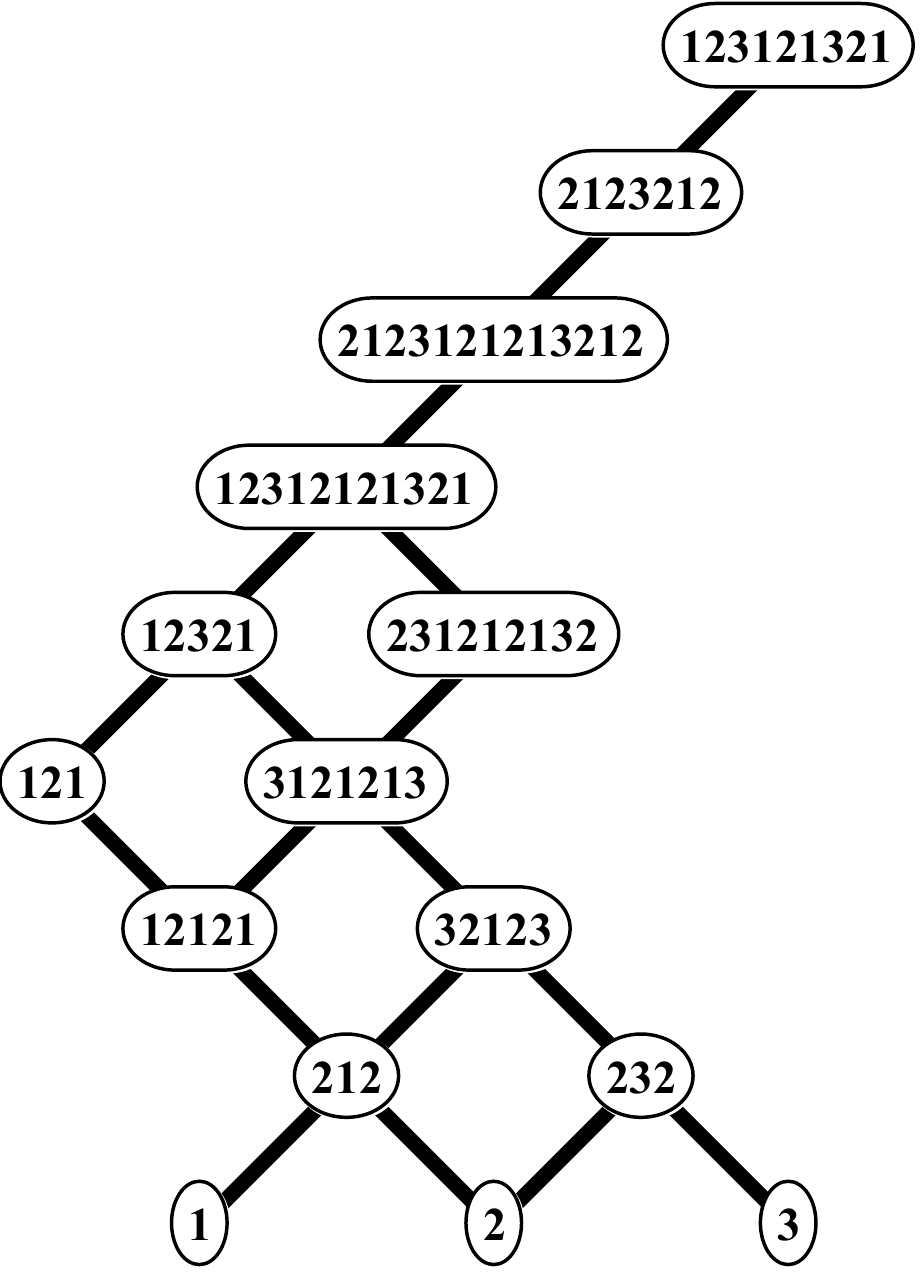}
\caption{The vertices of the root poset of type $H_3$ labeled using the method of~\Cref{sec:h3}.}
\label{fig:h3rootposet}
\end{figure}

\section{Poset Embeddings}
\label{sec:embedding}

Fix \[(\X,\Y,\Z) \in \left\{\begin{array}{l}\big(\Gr(k,n),B_{k,n},\OG(n,2n)\big),\\ \big(\OG(6,12),H_3,\mathsf{G}_\omega(\mathbb{O}^3,\mathbb{O}^6)\big),\\ \big(\QQ^{2n},I_2(2n),\QQ^{4n-2}\big) \end{array}\right\}\] a triple from~\Cref{fig:triples}. We will refer to such a triple by its label (B), (H), or (I), as in \Cref{fig:triples}.  Here again, $X$ and $Z$ stand for minuscule varieties, while $Y$ is a bookkeeping device specifying a Cartan type. Following~\cite{thomas2009combinatorial}, we formalize~\Cref{fig:coincidental_top_half_minuscules} by embedding the doppelg\"anger posets $\Lambda_{\X}$ and $\Phi^+_{\Y}$ into the ambient minuscule poset $\Lambda_{\Z}$.  That is, we explicitly characterize
    \[\vv/\uu := \Theta(\Lambda_{\X}) \subseteq \Lambda_{\Z} \hspace{3em} \text{ and } \hspace{3em} \w := \chi(\Phi^+_{\Y})  \subseteq \Lambda_{\Z}.\]

\subsection{$\X$ in $\Z$: Embedding Minuscule Varieties}
\label{sec:embedding_geo}
A minuscule flag variety is specified by a Cartan type and a minuscule weight, as in~\Cref{fig:min_classification}.  For $\X$ a minuscule flag variety, let $\mathrm{Cart}(\X)$ be the corresponding Cartan type, write $W(\X)$ for the Weyl group $W(\mathrm{Cart}(\X))$, and let $W^\X$ be the corresponding maximal parabolic quotient of $W(\X)$.

\begin{figure}[htbp]
\begin{center}
\[\begin{array}{ccc}
  \begin{tikzpicture}[scale=.3]
    \draw[thick] (0 cm,-1) -- (2 cm,0);
    \draw[thick] (0 cm,1) -- (2 cm,0);
    \draw[thick] (2 cm,0) -- (4 cm,0);
    \draw[dotted,thick] (4 cm,0) -- (8 cm,0);
    \draw[thick] (8 cm,0) -- (10 cm,0);
    \draw[thick,solid,fill=white] (0cm,-1) circle (.6cm) node  {$1$};
    \draw[thick,solid,fill=gray] (0cm,1) circle (.6cm) node[white]  {$2$};
    \draw[thick,solid,fill=gray] (2cm,0) circle (.6cm) node[white]  {$3$};
    \draw[thick,solid,fill=gray] (4cm,0) circle (.6cm);
    \draw[thick,solid,fill=gray] (8cm,0) circle (.6cm);
    \draw[thick,solid,fill=gray] (10cm,0) circle (.6cm) node[white]  {$n$};
  \end{tikzpicture} & 
      \begin{tikzpicture}[scale=.3]
    \draw[thick] (0 cm,0) -- (10 cm,0);
    \draw[thick] (6 cm,0) -- (6 cm,2);
    \draw[thick,solid,fill=gray] (0cm,0) circle (.6cm) node[white]  {$1$};
    \draw[thick,solid,fill=gray] (2cm,0) circle (.6cm) node[white]  {$3$};
    \draw[thick,solid,fill=gray] (4cm,0) circle (.6cm) node[white]  {$4$};
    \draw[thick,solid,fill=gray] (6cm,0) circle (.6cm) node[white]  {$5$};
    \draw[thick,solid,fill=gray] (8cm,0) circle (.6cm) node[white]  {$6$};
    \draw[thick,solid,fill=white] (10cm,0) circle (.6cm) node  {$7$};
    \draw[thick,solid,fill=gray] (6cm,2) circle (.6cm) node[white]  {$2$};
  \end{tikzpicture} & 
    \begin{tikzpicture}[scale=.3]
    \draw[thick] (0 cm,-1) -- (2 cm,0);
    \draw[thick] (0 cm,1) -- (2 cm,0);
    \draw[dotted,thick] (2 cm,0) -- (5 cm,0);
    \draw[thick] (5 cm,0) -- (7 cm,0);
    \draw[dotted,thick] (7 cm,0) -- (10 cm,0);
    \draw[thick,solid,fill=gray] (0cm,-1) circle (.6cm) node[white]  {$1$};
    \draw[thick,solid,fill=gray] (0cm,1) circle (.6cm) node[white]  {$2$};
    \draw[thick,solid,fill=gray] (2cm,0) circle (.6cm) node[white]  {$3$};
    \draw[thick,solid,fill=gray] (5cm,0) circle (.6cm) node[white]  {$m$};
    \draw[thick,solid,fill=white] (7cm,0) circle (.6cm);
    \draw[thick,solid,fill=white] (10cm,0) circle (.6cm);
  \end{tikzpicture}\\
  \Theta_B:\Delta(A_{n-1}) \hookrightarrow \Delta(D_{n}) & \Theta_H: \Delta(D_{6}) \hookrightarrow \Delta(E_7) & \Theta_I: \Delta(D_{m}) \hookrightarrow \Delta(D_{2m-2})
\end{array}\]
  \end{center}
\caption{Embedding $\Delta(\mathrm{Cart}(\X))$ into $\Delta(\mathrm{Cart}(\Z))$.}
\label{fig:embed}
\end{figure}

\Cref{fig:embed} illustrates the following embeddings of Dynkin diagrams, specified as an injection of the simple roots of $\mathrm{Cart}(\X)$ into the simple roots of $\mathrm{Cart}(\Z)$:
\begin{alignat*}{5} &\Theta_B &&: &\Delta(A_{n-1}) &\hookrightarrow \Delta(D_{n}) \\ \vspace{10pt} &&&& \alpha_i &\mapsto \alpha_{i+1}, \\ &\Theta_H &&: &\Delta(D_{6}) &\hookrightarrow \Delta(E_7) \\ &&&& \substack{\alpha_1 \mapsto \alpha_6 \\ \alpha_6 \mapsto \alpha_1},& \substack{\alpha_2 \mapsto \alpha_2 \\ \alpha_4 \mapsto \alpha_4}, \substack{\alpha_3 \mapsto \alpha_5 \\ \alpha_5 \mapsto \alpha_3}, \\ \vspace{10pt} &\Theta_I &&: &\Delta(D_{m}) &\hookrightarrow \Delta(D_{2m-2}) \\ &&&& \alpha_i&\mapsto \alpha_i. \end{alignat*}
We drop the subscript on $\Theta$ when the interpretation is clear from context.  These embeddings extend by linearity to injections of the full root system \[\Theta: \Phi(\mathrm{Cart}(\X)) \hookrightarrow \Phi(\mathrm{Cart}(\Z))\] and (under the correspondence between roots and reflections) to injections of the associated Weyl groups \[\Theta: W(\X) \hookrightarrow W(\Z).\] (Our abuse of notation, in calling all of these maps $\Theta$, should cause no confusion in context.)

Following~\cite{thomas2009combinatorial}, the next proposition states that these embeddings of Dynkin diagrams actually induce embeddings of minuscule flag varieties.  Using the Bruhat cell decomposition of~\Cref{eq:bruhatdecomposition} and results of~\cite{thomas2009combinatorial}, it suffices to embed the parabolic Weyl group quotients in a sufficiently nice way.

\begin{proposition}[After~\cite{thomas2009combinatorial}] There is an embedding $\Theta: X \hookrightarrow Z$ of minuscule varieties
such that \[\Theta(W^\X) = \{u x : x \in W^\X\} \subseteq W^\Z,\] for some $u \in W^\Z$.
\label{prop:thomasyong}
\end{proposition}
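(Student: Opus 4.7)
The plan has two components. First I would establish the geometric embedding $\Theta:X \hookrightarrow Z$, then verify the combinatorial description of $\Theta(W^X)$ as a left coset inside $W^Z$ by a case-by-case analysis using heaps.

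For the geometric part: the Dynkin-diagram embedding of Figure~\ref{fig:embed} is by construction a Coxeter-graph monomorphism, so it extends linearly to an injection of root systems $\Phi(\mathrm{Cart}(X))\hookrightarrow \Phi(\mathrm{Cart}(Z))$ sending simple roots to simple roots and preserving inner products up to positive scalar (as follows from the explicit map in each case). Via the standard dictionary (Section~\ref{sec:flag_varieties}) this yields a closed immersion of semisimple Lie subalgebras $\mathfrak{g}(X)\hookrightarrow \mathfrak{g}(Z)$, compatible with the choices of Cartan and Borel, which integrates to a closed immersion of simply-connected complex Lie groups $\mathsf{G}(X)\hookrightarrow \mathsf{G}(Z)$. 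One then checks in each of the three cases that the minuscule coweight defining $X$ is the restriction of the minuscule coweight defining $Z$; this is where the specific labeling choices of Figure~\ref{fig:embed} (e.g.\ $\alpha_1\mapsto \alpha_6$ in the $H$ case) enter. Consequently the stabilizing parabolics are compatible, $\mathsf{P}(X)=\mathsf{G}(X)\cap \mathsf{P}(Z)$, and the quotient map descends to a closed embedding of minuscule flag varieties $\Theta:X\hookrightarrow Z$. This is essentially the construction of~\cite{thomas2009combinatorial}; I would invoke their result rather than reprove it.

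For the combinatorial part: by~\Cref{thm:labelsofels}, both $W^X$ and $W^Z$ are identified with distributive lattices of order ideals $J(\Lambda_X)$ and $J(\Lambda_Z)$. Under these identifications, the claim that $\Theta(W^X)=uW^X\subseteq W^Z$ amounts to the assertion that there is an order ideal $\mathbf{u}\in J(\Lambda_Z)$ whose complement in $\Lambda_Z$ contains an order filter isomorphic to $\Lambda_X$, and such that the elements of $W^Z$ in the image $\Theta(W^X)$ correspond bijectively to order ideals of $\Lambda_Z$ obtained by adjoining an order ideal of this filter-copy of $\Lambda_X$ to $\mathbf{u}$. I would verify this by working directly with reduced words and heaps: take the explicit reduced word for $\wo^X\in W^X$ from~\Cref{sec:explicit}, apply $\Theta$ generator-by-generator, and check that the resulting word in the Coxeter generators of $W(Z)$ is reduced (this is automatic since $\Theta$ preserves all braid and commutation relations) and can be pre-concatenated with a fixed reduced word for some $u\in W^Z$ to give a reduced word for an element of $W^Z$ whose heap is exactly the order ideal $\mathbf{u}\sqcup \Theta(\Lambda_X)$ shown in Figure~\ref{fig:coincidental_top_half_minuscules}. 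Once the heap of $u\cdot \Theta(\wo^X)$ is identified as an order ideal of $\Lambda_Z$ with the embedded copy of $\Lambda_X$ as a top filter, the general statement $\Theta(W^X)=\{ux:x\in W^X\}$ follows from~\Cref{thm:linandred}, since order ideals in $\Lambda_X$ correspond to elements of $[e,\wo^X]\subseteq W(X)$ and hence to suffixes of the chosen reduced word for $\wo^X$.

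The main obstacle is the explicit verification in case (H), where $\Theta:W(D_6)\hookrightarrow W(E_7)$ mixes up the labels of the simple reflections and the heap of $\wo^X$ in $E_7$ is a genuinely exceptional object. Cases (B) and (I) are routine bookkeeping with staircase and rectangle shapes. One could streamline case (H) either by a direct computer-algebra check in the Coxeter group of $E_7$, or by identifying $u$ as the element whose heap is precisely the darkly shaded subposet of $\Lambda_Z$ in the middle picture of Figure~\ref{fig:coincidental_top_half_minuscules} and then verifying that the $16$-letter reduced word for $\wo^{\langle 6 \rangle}$ in $W(D_6)$ given in~\Cref{sec:typeDels}, after being pushed through $\Theta$, fits as the ``top half'' of a reduced word for $u\wo^X\in W^Z$.
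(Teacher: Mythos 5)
Your overall strategy matches the paper's: explicitly identify the element $u$ (via the shaded subposet in \Cref{fig:coincidental_top_half_minuscules}), then verify case-by-case with heaps that $ux$ lies in $W^\Z$, reducing the exceptional case (H) to a finite check.  However, there is a genuine gap in how you pass from the heap computation to the asserted equality $\Theta(W^\X) = \{ux : x \in W^\X\}$.

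Your combinatorial argument — showing that $u\cdot\Theta(\wo^\X)$ has heap $\uu\sqcup\Theta(\Lambda_\X)$ and invoking \Cref{thm:linandred} — establishes only the \emph{containment} $\{ux : x \in W^\X\} \subseteq W^\Z$: it shows the set $\{ux\}$ consists of minimal coset representatives (equivalently, order ideals of $\Lambda_\Z$).  But \Cref{thm:linandred} is a purely combinatorial statement about reduced words and order ideals of a fully commutative element's heap; it contains no information linking the set $\{ux\}$ to the \emph{geometric} map $\Theta$.  The assertion $\Theta(W^\X)=\{ux\}$ is about which opposite Schubert cells of $Z$ the images $\Theta(X^w_\X)$ of the Schubert varieties of $X$ land in — equivalently, about where $\Theta$ sends the $\T$-fixed points — and that requires an argument about the $\T$-equivariant structure of the embedding, not just the weak order.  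The paper supplies this missing piece by citing \cite[Proposition 6.1]{thomas2009combinatorial}: the heap checks establish the hypotheses (the analogues of \cite[Corollary 6.7 and Lemma 6.8]{thomas2009combinatorial}), and then Proposition 6.1 there is what actually produces the coset description of $\Theta$ on $W^\X$.  Your first paragraph does say ``I would invoke their result rather than reprove it,'' but you invoke it only for the abstract existence of the closed immersion $\Theta: X\hookrightarrow Z$; you then attempt to recover the coset description independently via \Cref{thm:linandred}, and that step does not go through without the additional geometric input.  The fix is simply to cite \cite[Proposition 6.1]{thomas2009combinatorial} at that point, exactly as the paper does.

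Two smaller remarks: your observation that $\Theta$ applied letter-by-letter to a reduced word stays reduced is correct (because the image of $\Delta(\mathrm{Cart}(\X))$ is a subset of $\Delta(\mathrm{Cart}(\Z))$, so $\Theta(W(\X))$ is a standard parabolic subgroup of $W(\Z)$, and length is computed the same way on both sides), and this matches what the paper implicitly uses.  Your extra detail on the Lie-theoretic construction of $\Theta$ (Cartan/Borel compatibility, integration to a closed immersion) is a reasonable expansion of what the paper delegates wholesale to Thomas--Yong, and is not a point of disagreement.
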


\begin{proof}
We first characterize $u \in W^\Z$ in each of the three cases.
\begin{enumerate}
\item[(B)] $u$ is the longest element of the parabolic quotient $W(D_k)^{\langle 1 \rangle}$, explicitly identified in~\Cref{sec:typeDels}.
\item[(H)] $u:=s_7s_6s_5s_4s_3s_1$; and
\item[(I)] $u:=s_{2m}s_{2m-1}\cdots s_{m+2}$. 
\end{enumerate}

One checks case-by-case that if $x \in W^\X$, then $u x \in W^\Z$ by examining the corresponding heaps.   These routine but tedious checks prove the containments in the proposition, which establish the analogues of~\cite[Corollary 6.7 and Lemma 6.8]{thomas2009combinatorial} in these settings.  The equality now follows from~\cite[Proposition 6.1]{thomas2009combinatorial}.
\end{proof}

We deduce that there is an embedding of the corresponding minuscule posets.

\begin{corollary}
For \[(\X,\Z) \in \left\{\begin{array}{l}\big(\Gr(k,n),\OG(n,2n)\big),\\ \big(\OG(6,12),\mathsf{G}_\omega(\mathbb{O}^3,\mathbb{O}^6)\big),\\ \big(\QQ^{2n},\QQ^{4n-2}\big) \end{array}\right\}\]  from any row of~\Cref{fig:triples}, there are poset embeddings $\Theta: \Lambda_{\X} \hookrightarrow \Lambda_{\Z}$.
\end{corollary}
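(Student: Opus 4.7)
The plan is to bootstrap the Weyl group embedding of \Cref{prop:thomasyong} to a poset embedding by working at the level of heaps. Applying \Cref{prop:thomasyong} with $x = \wo^{\X}$, the longest element of $W^{\X}$, produces an element $u\wo^{\X} \in W^{\Z}$ with $\len(u\wo^{\X}) = \len(u) + \len(\wo^{\X})$. Concatenating any reduced word for $u$ with the $\Theta$-image of any reduced word for $\wo^{\X}$ then yields a reduced word for $u\wo^{\X}$. Since the minuscule parabolic quotient $W^{\Z}$ has a fully commutative longest element and the set of fully commutative elements of a Coxeter group is downward closed in the weak order, $u\wo^{\X}$ is itself fully commutative; its heap is therefore (by \Cref{thm:linandred} together with \Cref{thm:labelsofels}) canonically an order ideal of $\Lambda_{\Z}$, and the bottom portion of this heap coincides with the heap of $u$.

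I would then define the candidate map of heaps as follows: the position in the chosen reduced word for $u\wo^{\X}$ occupied by the letter $s_{\Theta(i)}$ coming from the $\Theta$-image of a reduced word for $\wo^{\X}$ is declared to be the image of the corresponding position in the original word for $\wo^{\X}$. By \Cref{thm:linandred}, the heap of $\wo^{\X}$ is exactly $\Lambda_{\X}$, so this gives an injection $\Theta \colon \Lambda_{\X} \hookrightarrow \Lambda_{\Z}$. Order-preservation is immediate from \Cref{eq:heap}: the heap partial order is generated by pairs $i < j$ with $s_{k_i}$ and $s_{k_j}$ non-commuting, and an embedding of Coxeter--Dynkin diagrams sends non-commuting simple reflections to non-commuting simple reflections.

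Order-reflection requires a small case-by-case verification: one must confirm that two simple reflections in $W(\X)$ that \emph{do} commute are not sent by $\Theta$ to two simple reflections in $W(\Z)$ that fail to commute. Equivalently, the subdiagram of $\Delta(\mathrm{Cart}(\Z))$ induced on $\Theta\big(\Delta(\mathrm{Cart}(\X))\big)$ must equal $\Delta(\mathrm{Cart}(\X))$ as an edge-labeled graph. This is easily read off from \Cref{fig:embed} in each of the three cases (B), (H), (I)---for instance in case (H), the edges of $D_6$ are $\{1,3\},\{2,3\},\{3,4\},\{4,5\},\{5,6\}$, whose $\Theta_H$-images $\{6,5\},\{2,5\},\{5,4\},\{4,3\},\{3,1\}$ are precisely the edges of $E_7$ inside $\{1,\dots,6\}$, with no extra edges.

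The main obstacle, and the only reason the argument is not purely formal, is the heap-theoretic bookkeeping in the concatenation step: one needs to be sure that no new heap relations internal to the $\Theta$-image portion of the reduced word for $u\wo^{\X}$ can arise from chains of non-commutations routed through the $u$-portion. But this is automatic, because the partial order $\prec_{\mathbf{w}}$ of \Cref{eq:heap} is the transitive closure of non-commutations between pairs of positions, so a relation between two positions both lying in the suffix coming from $\wo^{\X}$ is already generated by non-commutations within that suffix, which matches the heap order on $\Lambda_{\X}$ under $\Theta$.
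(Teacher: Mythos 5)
Your proposal is correct and takes essentially the same route as the paper's own proof, which likewise sets $v = u\wo^{\X}$, invokes \Cref{prop:thomasyong} together with full commutativity and the heap machinery (\Cref{thm:linandred}, \Cref{thm:labelsofels}), and concludes that $\Lambda_{\X}$ embeds in $\Lambda_{\Z}$ as the skew shape $\vv/\uu$. The only difference is that you spell out the heap-level bookkeeping (length additivity of $u\wo^{\X}$, the induced-subdiagram check for order-reflection, and the fact that no heap relations between suffix positions can be routed through the prefix) that the paper leaves implicit in the case-by-case verification inside the proof of \Cref{prop:thomasyong}.
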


\begin{proof}
Let $v:=u\wo^\X \in W^\Z$, where $\wo^\X$ is the longest element of $W^X$.  Since $\wo^X$ is fully commutative, by~\Cref{thm:labelsofels} and~\Cref{prop:thomasyong}, $\Lambda_{\X}$ embeds in $\Lambda_{\Z}$ as the poset $\vv/\uu$.  
\end{proof}


\subsection{$\Y$ in $\Z$: Embedding Root Posets}
\label{sec:root_posets_embed}
We now embed the root posets of the coincidental types $\Phi^+_{\Y}$ into the ambient minuscule posets $\Lambda_{\Z}$.  We find the existence of these embeddings mysterious; unexpectedly, the element $w \in W(\Z)$ whose heap $\w$ coincides with $\Phi^+_{\Y}$ has the same number of reduced words as $\wo(\Y) \in W(\Y)$.

\begin{proposition}
For \[(\Y,\Z) \in \left\{\begin{array}{l}\big(B_{k,n},\OG(n,2n)\big),\\ \big(H_3,\mathsf{G}_\omega(\mathbb{O}^3,\mathbb{O}^6)\big),\\ \big(I_2(2n),\QQ^{4n-2}\big) \end{array}\right\}\]
 from any row of~\Cref{fig:triples}, there is a poset embedding $\chi:\Phi^+_{\Y} \hookrightarrow \Lambda_{\Z}$, so that $\chi(\Phi^+_{\Y})$ has straight shape $\w$ for some $w \in W(\Z)$, and such that \[\Red(\wo(\Y)) \simeq \Red(w),\] where $\wo(\Y)$ is the element defined in~\Cref{eq:long_element} with inversion set $\Phi^+_{\Y}$.
\label{prop:rootposetembeds}
\end{proposition}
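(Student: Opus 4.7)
The plan is a case-by-case construction: for each of the three pairs $(\Y,\Z)$ I will write down an explicit reduced word for a candidate $w \in W(\Z)$, compute its heap using~\Cref{eq:heap}, and then verify three things: (i) that the resulting heap really is an order ideal of $\Lambda_{\Z}$, so that $w \in W^{\Z}$ and $\chi := (\text{heap labels})$ gives the desired embedding; (ii) that the heap is order-isomorphic to $\Phi^+_{\Y}$ as constructed in~\Cref{sec:coincidental}; and (iii) that $|\Red(w)| = |\Red(\wo(\Y))|$. Since $\w$ is a straight shape in a minuscule poset, $w$ is automatically fully commutative, and so by~\Cref{thm:linandred} we have $|\Red(w)| = |\SYT(\w)| = |\SYT(\Phi^+_{\Y})|$ once (ii) is established. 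Thus (iii) reduces to the identity $|\Red(\wo(\Y))| = |\SYT(\Phi^+_{\Y})|$.

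Case~(B) is the most straightforward. Working with the $D_n$ generators of~\Cref{fig:min_classification} and the recipe used for $\wo^{\langle 1\rangle}$ in~\Cref{sec:typeDels}, I would build $w$ by appending the first $k$ diagonals of the reduced word for $\wo^{\langle 1\rangle}$, so that the heap fills in the lower-left portion of the shifted staircase $\Lambda_{\OG(n,2n)} = (n-1,n-2,\ldots,1)_*$ in the shape of the shifted trapezoid $\trap{k}{n} = (n-1,n-3,\ldots,n-2k+1)_*$. The heap calculation is routine because the long commutation relations in $D_n$ let one slide the $s_1,s_2$ pair past the $s_i$ with $i\geq 4$ independently. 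Here $\wo(\Y) = b_{k,n}$ is itself fully commutative with heap $\trap{k}{n}$, so (iii) is immediate: both $|\Red(w)|$ and $|\Red(b_{k,n})|$ equal $|\SYT(\trap{k}{n})|$. Case~(I) is equally short: one takes $w$ to be the reduced product corresponding to the chain of length $2n-2$ sitting below the two-element antichain in $\Lambda_{\QQ^{4n-2}}$, matching the picture in~\Cref{fig:coincidental_top_half_minuscules}(I). Both $|\Red(w)|$ and $|\Red(\wo(I_2(2n)))|$ equal $2$---on the $w$ side because the only nontriviality is the choice of which antichain element is listed first, and on the $\wo$ side because $\wo(I_2(2n))$ has exactly the two reduced words $\underbrace{sts\cdots}_{2n}$ and $\underbrace{tst\cdots}_{2n}$.

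The exceptional case~(H) will be the main obstacle. Here $w \in W(E_7)$ has length $15$, and I would produce an explicit reduced word by reading off a linear extension of the gray subposet in~\Cref{fig:coincidental_top_half_minuscules}(H) and translating it through the labeling of~\Cref{sec:minuscule} for $\wo^{\langle 1\rangle}$ in $E_7$. Verifying that the resulting heap coincides with $\Phi^+_{H_3}$ as built in~\Cref{sec:h3} requires checking each of the cover relations individually, since $E_7$ offers none of the symmetry shortcuts that make case~(B) routine. Crucially, $\wo(H_3)$ is not fully commutative, so one cannot appeal to~\Cref{thm:linandred} to count its reduced words directly. Instead, I would invoke the coincidental-type reduced word theorem (cf.~\cite{williams13cataland} and the prior references for types $A, B$), which gives $|\Red(\wo(H_3))| = |\SYT(\Phi^+_{H_3})|$ as a special case; combined with $|\Red(w)| = |\SYT(\w)| = |\SYT(\Phi^+_{H_3})|$ from (i)--(ii), this closes the argument. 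The main difficulty is therefore entirely computational bookkeeping in $E_7$ rather than a conceptual issue.
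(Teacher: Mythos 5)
Your overall strategy---an explicit case-by-case word $w\in W(\Z)$, a heap computation identifying $\w$ with $\Phi^+_\Y$ inside $\Lambda_\Z$, and then a comparison of reduced-word counts---is essentially the paper's, and your handling of the $w$-side count (full commutativity of $w$ plus \Cref{thm:linandred}), of case (I), and of case (H) via the coincidental-type enumeration is sound. The genuine gap is in case (B): you claim that $\wo(\Y)=b_{k,n}$ ``is itself fully commutative with heap $\trap{k}{n}$,'' so that $|\Red(b_{k,n})|=|\SYT(\trap{k}{n})|$ is immediate. This is false for $k\ge 2$. Already $b_{2,4}=s_1s_2s_1s_2=\wo(B_2)$ is not fully commutative (its two reduced words are related by the $m_{12}=4$ braid), and in general $(s_1s_2\cdots s_{n-k})^k$ has reduced words containing braid factors: for instance commuting $s_3$ past $s_1$ in $(s_1s_2s_3)^2=b_{2,5}$ produces the factor $s_3s_2s_3$ with $m_{23}=3$. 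Consequently the heap of $b_{k,n}$ is not even defined---$\trap{k}{n}$ is its \emph{inversion set}---and \Cref{thm:linandred} cannot be applied on the $\wo(\Y)$ side; for non-fully-commutative elements there is no general relation between $|\Red(w)|$ and the number of linear extensions of the inversion poset. The identity $|\Red(b_{k,n})|=|\SYT(\trap{k}{n})|$ is a genuinely nontrivial theorem, which the paper gets from Kraskiewicz insertion and the Haiman/Billey et al.\ theory of type $B$ reduced words (via the isomorphism $\Lambda_{\OG(n,2n)}\simeq\Lambda_{\LG(n-1,2n-2)}$); it is precisely the type-$B$ instance of the coincidental-type reduced word theorem that you yourself invoke for $H_3$, so the repair is to cite it here as well rather than declare the case trivial.

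A smaller inaccuracy in the same case: taking ``the first $k$ diagonals'' (i.e.\ the first $k$ factors) of the reduced word for $\wo^{\langle 1\rangle}$ from \Cref{sec:typeDels} yields a heap of shifted shape $(n-1,n-2,\ldots,n-k)_*$, which is the shape $\vv$ of \Cref{sec:b_rect_and_trap}, not the trapezoid $\w$. To obtain $\trap{k}{n}=(n-1,n-3,\ldots,n-2k+1)_*$ one must shorten the $j$th factor to $n-2j+1$ letters, as in the paper's explicit word $\prod_{j=1}^k\bigl(s_{1,2}(j)\prod_{i=3}^{n-2j+2}s_i\bigr)$; with that correction the heap check you outline goes through.
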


\begin{proof}

We first characterize the elements $w \in W(\Z)$ in each of the three cases.

\begin{enumerate}
\item[$\mathrm{(B)}$] $w :=\prod_{j=1}^k \left( s_{1,2}(j) \prod_{i=3}^{n-2j+2} s_i\right)$, where $s_{1,2}(j)=\begin{cases} s_1 & \text{if } j \text{ is odd} \\  s_2 & \text{if } j \text{ is even}\end{cases}.$
    \item[$\mathrm{(H)}$] $w := s_1s_3s_4s_5s_6s_7s_2s_5s_6s_4s_5s_2s_3s_4s_1$.
    \item[$\mathrm{(I)}$] $w := \left(\prod_{j=3}^{2n} s_j \right)^{-1} (s_1s_2)$.
\end{enumerate}

The statement that $\Red(\wo(\Y)) \simeq \Red(w)$ follows  for (B) by~\cite{kraskiewcz1989reduced,haiman1992dual,billey2014coxeter} using the poset isomorphism between $\Lambda_{\OG(n,2n)}$ and $\Lambda_{\LG(n-1,2n-2)}$).  For (I) and (H), this is an easy but unenlightening check~\cite{williams13cataland}.
\end{proof}

\begin{remark}
	 Note that, in contrast to the coincidental root posets, the graphs underlying the root posets of types $D_n$ ($n > 5$), $E_6$, $E_7$, $E_8$, and $F_4$ are nonplanar, and hence cannot embed in any minuscule poset.
\end{remark}

\section{Applications: Doppelg\"angers}\label{sec:applications}

As in \Cref{sec:embedding}, fix \[(\X,\Y,\Z) \in \left\{\begin{array}{l}\big(\Gr(k,n),B_{k,n},\OG(n,2n)\big),\\ \big(\OG(6,12),H_3,\mathsf{G}_\omega(\mathbb{O}^3,\mathbb{O}^6)\big),\\ \big(\QQ^{2n},I_2(2n),\QQ^{4n-2}\big) \end{array}\right\}\] a triple from~\Cref{fig:triples}. We continue to refer to such a triple by its label (B), (H), or (I), as in \Cref{fig:triples}.
  We recall that $\Theta$ is the specified embedding of the doppelg\"anger minuscule poset $\Lambda_{X}$ in $\Lambda_Z$, $\chi$ is the specified embedding of the coincidental root poset $\Phi^+_{\Y}$, and that we denote the corresponding shapes inside the ambient minuscule poset $\Lambda_{Z}$ as
\[  {\sf v}/{\sf u} := \Theta(\Lambda_{X})  \hspace{3em} \text{ and } \hspace{3em} {\sf w} := \chi(\Phi^+_{\Y}).\]
In all three cases, we will deduce~\Cref{thm:main_thm1} from~\Cref{thm:gen_main_thm} by showing that $\R_{{\sf w}, {\sf u}}^{{\sf v}}$ has a unique element and that $\R_{{\sf x}, {\sf u}}^{\sf v} = \emptyset$ for ${\sf x} \neq {\sf w}$. 

\begin{remark}
Geometrically, this amounts to showing that $[\mathcal{O}_{X_{\check{v}}}] \cdot [\mathcal{O}_{X_u}] = [\mathcal{O}_{X_{\check{w}}}]$ in the \K-theory ring $K(Z)$. 
Indeed, by \Cref{rem:notnecessarily}, it would be enough to establish that $\sigma_{\check{v}} \cdot \sigma_u = \sigma_{\check{w}}$ in $H^\star(Z)$, or equivalently that $[X_u^v] = [X^w] \in A^\star(Z)$. So in fact, for example, \Cref{thm:main_thm1}(B) (providing R.~Proctor's missing bijection) essentially follows from combining the geometric fact of \Cref{rem:notnecessarily}, the easy combinatorial observation of \Cref{thm:gen_main_thm}, and M.~Haiman's bijection of standard tableaux from \Cref{rem:previous_work}.
\end{remark}

\subsection{(B): Rectangles and Shifted Trapezoids}
\label{sec:b_rect_and_trap}
\Cref{sec:coincidental,sec:minuscule,sec:embedding} identify the following shapes for $Y=\trap{k}{n}$ and $X=\Gr(k,n)$: 

\begin{align*}
{\sf w} &= (n-1,n-3,\ldots,n-2k+1)_* \text{ is a shifted trapezoid, and}\\
{\sf v} / {\sf u} &= (n-k,n-k,\ldots,n-k) \text{ is a }k \times (n-k) \text{ rectangle.  Then}\\
{\sf u} & = (k-1, k-2, \ldots, 1)_* \text{ is a shifted staircase and}\\
{\sf v} & = (n-1,n-2,\dots, n-k)_*.
\end{align*}

\begin{figure}[htbp]
\begin{center}
\ydiagram{5,5,5} \hspace{.15\linewidth} \ydiagram{7,1+5,2+3}
\end{center}
\caption{${\sf v} / {\sf u} = \Theta(\Lambda_{\Gr(k,n)})$ and ${\sf w} = \chi(\trap{k}{n})$ for $k=3$ and $n=8$.}
\label{fig:anm_bnm}
\end{figure}

\Cref{fig:anm_bnm} illustrates examples of $\vv/\uu$ and $\w$.  Write $a:=2k-3$ and let $\U$ be the increasing anti-straight Ferrers tableau of shape ${\sf v} / {\sf w}$ obtained by labeling each southwest-to-northeast diagonal of the staircase ${\sf v} / {\sf w}$
with consecutive increasing integers, where the bottom row is labeled with the odd numbers from $1$ to $a$.  \Cref{fig:double}(a) and \Cref{fig:U} illustrate examples of $\T_\uu^{\rm min}$ and $\U$.

\begin{figure}[htbp]
\raisebox{-0.5\height}{\begin{tikzpicture}[scale=.35]
	\draw[thick] (0 cm,0) -- (6 cm,6);
	\draw[thick] (0 cm,2) -- (5 cm,7);
    \draw[thick] (0 cm,4) -- (4 cm,8);
	\draw[thick] (0 cm,6) -- (3 cm,9);
    \draw[thick] (0 cm,8) -- (2 cm,10);
    \draw[thick] (0 cm,10) -- (1 cm,11);
    \draw[thick] (4 cm,4) -- (0 cm,8);
	\draw[thick] (3 cm,3) -- (0 cm,6);
	\draw[thick] (2 cm,2) -- (0 cm,4);
	\draw[thick] (1 cm,1) -- (0 cm,2);
    \draw[thick] (5 cm,5) -- (0 cm,10);
    \draw[thick] (6 cm,6) -- (0 cm,12);
    \draw[thick,solid,fill=gray] (0cm,0) circle (.5cm)  node[white] {$1$}; 
    \draw[thick,solid,fill=gray] (0cm,2) circle (.5cm) node[white] {$3$};
    \draw[thick,solid,fill=gray] (0cm,4) circle (.5cm)  node[white] {$5$};
    \draw[ultra thick,solid,fill=gray] (0cm,6) circle (.5cm);
    \draw[thick,solid,fill=white] (0cm,8) circle (.5cm);
    \draw[thick,solid,fill=white] (0cm,10) circle (.5cm);
    \draw[thick,solid,fill=white] (0cm,12) circle (.5cm);
    \draw[thick,solid,fill=gray] (1cm,1) circle (.5cm) node[white] {$2$};
    \draw[thick,solid,fill=gray] (1cm,3) circle (.5cm) node[white] {$4$};
    \draw[ultra thick,solid,fill=gray] (1cm,5) circle (.5cm);
    \draw[ultra thick,solid,fill=white] (1cm,7) circle (.5cm) node {$1$};
    \draw[thick,solid,fill=white] (1cm,9) circle (.5cm) ;
    \draw[thick,solid,fill=white] (1cm,11) circle (.5cm) ;
    \draw[thick,solid,fill=gray] (2cm,2) circle (.5cm) node[white] {$3$};
    \draw[ultra thick,solid,fill=gray] (2cm,4) circle (.5cm);
    \draw[ultra thick,solid,fill=gray] (2cm,6) circle (.5cm);
    \draw[ultra thick,solid,fill=white] (2cm,8) circle (.5cm) node {$3$};
    \draw[thick,solid,fill=white] (2cm,10) circle (.5cm);
    \draw[ultra thick,solid,fill=gray] (3cm,3) circle (.5cm) ;
    \draw[ultra thick,solid,fill=gray] (3cm,5) circle (.5cm);
    \draw[ultra thick,solid,fill=white] (3cm,7) circle (.5cm)  node {$2$};
    \draw[ultra thick,solid,fill=white] (3cm,9) circle (.5cm)  node {$5$};
    \draw[ultra thick,solid,fill=gray] (4cm,4) circle (.5cm) ;
    \draw[ultra thick,solid,fill=gray] (4cm,6) circle (.5cm) ;
    \draw[ultra thick,solid,fill=white] (4cm,8) circle (.5cm) node {$4$};
    \draw[ultra thick,solid,fill=gray] (5cm,5) circle (.5cm) ;
    \draw[ultra thick,solid,fill=white] (5cm,7) circle (.5cm) node {$3$};
    \draw[ultra thick,solid,fill=gray] (6cm,6) circle (.5cm) ;
\end{tikzpicture}}
\raisebox{\height}{$\ytableaushort{\none \none \none \none 5, \none \none \none 46, \none \none 357,\none 2468, 13579}$}
\caption{On the left are the tableaux $\U$ (top, black numbers in white cirlces) and $\T_\uu^{\rm min}$ (bottom, white numbers in gray circles) for $k=3$ and $n=6$.  On the right is the increasing anti-straight Ferrers tableau $\U$ for $k=6$---inserting spaces for clarity, it has reading word $\rhow(\U) = 1\hspace{1ex}32\hspace{1ex}543\hspace{1ex}7654\hspace{1ex}98765$.}
\label{fig:U}
\end{figure}

We begin by characterizing a property of the reading word of any tableau that rectifies to $\T^{\rm min}_{\sf u}$.

\begin{lemma}
For $a=2k-3$, let $\pi \in \mathfrak{S}_{a}$ be the permutation with one-line notation \[[2,4,\ldots, a{+}1,1,3,\ldots,a].\]  Then any tableau $\widetilde \U$ that rectifies to $\T^{\rm min}_{\sf u}$ has a reading word $\rhow(\widetilde{\U})$ whose Demazure product is $\pi$.  In particular, since ${\sf len}(\pi)=\binom{k}{2}$, any such $\widetilde \U$ must have at least $\binom{k}{2}$ cells.
\label{lem:piword}
\end{lemma}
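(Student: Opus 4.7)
The plan is to show that the Demazure product is an invariant of the weak K-Knuth equivalence class of the reading word of a shifted tableau, and then to identify the Demazure product of $\rhow(\T^{\min}_{\uu})$ as $\pi$. The lower bound $\binom{k}{2} = \len(\pi)$ on the cell count will then follow immediately.

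First I would verify that each weak K-Knuth relation preserves the Demazure product of a word. The commutation $ab\mathbf{u} \sim ba\mathbf{u}$ with $|a-b|\geq 2$ is immediate since $s_a s_b = s_b s_a$; the idempotency $aa \sim a$ is exactly the defining property $s_a \bullet s_a = s_a$ of the Demazure product; and the relation $aba \sim bab$ is either an honest braid relation (when $|a-b|=1$) or, when $|a-b|\geq 2$, reduces via commutation and idempotency to the common value $s_a s_b = s_b s_a$. Since $\widetilde{\U}$ rectifies to $\T^{\min}_{\uu}$, the two shifted tableaux are jeu-de-taquin equivalent by construction (see \Cref{sec:rectification}), so \Cref{thm:jdt_and_knuth_equiv} forces $\rhow(\widetilde{\U})$ and $\rhow(\T^{\min}_{\uu})$ to be weakly K-Knuth equivalent, and hence to share a common Demazure product.

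Next I would compute the Demazure product of the column reading word
\[
\rhow(\T^{\min}_{\uu}) \,=\, 1,\; 3\,2,\; 5\,4\,3,\; \ldots,\; (2k{-}3)(2k{-}4)\cdots(k{-}1),
\]
by induction on $k$. The inductive step appends the new column word $(2k-3)(2k-4)\cdots(k-1)$; right-multiplication of $\pi_{k-1}$ (viewed in $\mathfrak{S}_{2k-2}$ via the natural embedding) by this sequence of simple transpositions bubbles the value $2k-2$ leftward from position $2k-2$ into position $k$, with each swap strictly increasing length because the entry being moved is a new maximum on its trajectory. This simultaneously shows that $\rhow(\T^{\min}_{\uu})$ is reduced and identifies its Demazure product as $\pi$. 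Since any word whose Demazure product equals $\pi$ must have length at least $\len(\pi) = \binom{k}{2}$, it follows that $\widetilde{\U}$ has at least $\binom{k}{2}$ cells.

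I expect the main combinatorial content to be the reduced-word identification of the previous paragraph; the invariance of the Demazure product under weak K-Knuth moves is essentially immediate from the definitions, and the reduction to jeu-de-taquin equivalence is quoted directly from \Cref{thm:jdt_and_knuth_equiv}.
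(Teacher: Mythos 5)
There is a genuine gap at your key step: the Demazure product is \emph{not} an invariant of weak K-Knuth equivalence. You treat the relation $ab\mathbf{u} \sim ba\mathbf{u}$ as a commutation, restricting it to $|a-b|\geq 2$, but in \Cref{fig:knuth_rels} (following Buch--Samuel) this relation is imposed for \emph{arbitrary} distinct first letters $a,b$ --- that is exactly the ``weak'' part of the equivalence, reflecting the special role of the shifted diagonal --- so adjacent letters may be swapped at the front of a word. Under such a swap the Demazure product can change: the words $12$ and $21$ are weakly K-Knuth equivalent, yet their Demazure products are $s_1s_2 \neq s_2s_1$. Consequently your inference ``weakly K-Knuth equivalent, and hence share a common Demazure product'' does not follow. (A smaller omission: your case check also leaves out the plain Knuth relations $bac \sim bca$ and $cab \sim acb$, though these are harmless because $a<b<c$ forces $s_a$ and $s_c$ to commute.)

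Apart from this, your route coincides with the paper's: the paper likewise verifies that $\rhow(\T^{\rm min}_{\uu})$ is a reduced word for $\pi$ (your induction on columns is a fine way to make that explicit) and then invokes \Cref{thm:jdt_and_knuth_equiv} to pass from rectification to weak K-Knuth equivalence of reading words. The difference is precisely at the problematic relation: the paper notes that any word weakly K-Knuth equivalent to a reduced word is at least as long, and that every reduced word for $\pi$ begins with two \emph{commuting} letters (the left descents of $\pi$ are the odd indices, which pairwise commute), and uses this to control the effect of the first-two-letter swaps and conclude that every word in the class has Demazure product $\pi$. To repair your proposal you must add an argument of this kind handling the adjacent-letter swap; as written, the claimed invariance is false.
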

\begin{proof}
It is easy to see that the reading word $\rhow(\T^{\rm min}_{\sf u})$ is a reduced word of the permutation $\pi$.   Since $\rhow(\T^{\rm min}_{\sf u})$ is a reduced word, any weakly \K-Knuth equivalent word is at least as long (see the weak \K-Knuth relations in~\Cref{fig:knuth_rels}).   Furthermore, since every reduced word for $\pi$ begins with two commuting letters, the words that are weakly \K-Knuth equivalent to $\rhow(\T^{\rm min}_{\sf u})$ have Demazure product $\pi$.  We conclude the statement using \Cref{thm:jdt_and_knuth_equiv} on the interchangeability of \K-jeu-de-taquin equivalence of tableaux and \K-Knuth equivalence of their reading words.
\end{proof}

We now consider tableaux whose reading word can be $\pi$.  Recall that a permutation $\tau \in \mathfrak{S}_a$ is \defn{vexillary} if its one-line notation avoids the pattern $2143$; $\tau$ is fully commutative if and only if it avoids the pattern $321$; and $\tau$ is \defn{Grassmannian} if it has at most one descent.  A Grassmannian permutation is therefore both vexillary \emph{and} fully commutative.  In particular, the $\pi$ of~\Cref{lem:piword} is a Grassmannian permutation.

\begin{lemma}\label{lem:vex_anti}
For $a=2k-3$, let $\pi \in \mathfrak{S}_{a}$ be the permutation with one-line notation \[[2,4,\ldots, a{+}1,1,3,\ldots,a].\]  There is a unique increasing anti-straight Ferrers tableau $\T_\pi$ with $\rhow(\T_\pi) \in \Red(\pi)$.
\end{lemma}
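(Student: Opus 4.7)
The plan is to establish existence and uniqueness separately. For existence, I would take $\T_\pi$ to have the anti-straight shape obtained as the complement of the staircase $\delta_{k-2} = (k-2, k-3, \ldots, 1)$ inside the $(k-1) \times (k-1)$ square (so that its $i$-th row from the top is right-justified with $i$ cells ending at column $k-1$), filled with entry $2j - k + i$ in the cell at row $i$ from the top and column $j$ from the left. Its column reading word becomes
\[
\rhow(\T_\pi) \;=\; 1 \cdot (3,2) \cdot (5,4,3) \cdots (2k{-}3)(2k{-}4)\cdots(k{-}1),
\]
as illustrated in Figure~\ref{fig:U}. By induction on $k$, applying these blocks left-to-right to the identity produces $\pi$: the first $k-2$ blocks give (inside the subgroup fixing the last two letters) the analogous permutation for $k-1$, and the final block $(2k{-}3)(2k{-}4)\cdots(k{-}1)$ cycles the new value $2k-2$ from position $2k-3$ into position $k-1$.

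For uniqueness, let $\T$ be any increasing anti-straight Ferrers tableau with $\rhow(\T) \in \Red(\pi)$. The permutation $\pi$ is Grassmannian with one descent at position $k-1$, so it avoids the pattern $321$ and hence is fully commutative. Consequently, every reduced word of $\pi$ has the same letter multiplicities, determined by the heap of $\pi$---the staircase $\delta_{k-1}$ with cell $(r,s)$ labeled $s_{(k-1)+s-r}$---so $\T$ has exactly $\binom{k}{2}$ cells. Moreover, $\pi$ is vexillary with associated partition $\delta_{k-1}$, so by Proposition~\ref{prop:kknuth}(\ref{item:linc}) applied to the rectification of $\T$, the longest strictly increasing subsequence of $\rhow(\T)$ has length $k-1$; and since each column of $\T$ contributes a strictly decreasing block to the reading word, at most one entry per column belongs to any strictly increasing subsequence. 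Hence $\T$ has at least $k-1$ columns. A dual Greene-type bound, obtained either by analyzing the reversal of $\rhow(\T)$ (a reduced word for $\pi^{-1}$, also of vexillary shape $\delta_{k-1}$) or by noting that the antichains of the heap $\delta_{k-1}$ have size at most $k-1$, forces each column of $\T$ to have at most $k-1$ cells.

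In an anti-straight shape the column lengths are weakly increasing left-to-right, so the only sequence of at least $k-1$ such lengths, each at most $k-1$, summing to $\binom{k}{2}$, is $(1,2,\ldots,k-1)$. This pins down the shape of $\T$ to match that of $\T_\pi$. The entries are then forced: the letter $1$ must occupy the unique cell of column $1$, and induction on the column index, using the fixed multiplicities $m_i$ together with the strict row- and column-increasing conditions, shows that column $c$ contains precisely $\{c, c+1, \ldots, 2c-1\}$ in decreasing order from bottom to top. Thus $\T = \T_\pi$, as desired. The main obstacle I anticipate is the ``longest strictly decreasing subsequence equals $k-1$'' bound: Proposition~\ref{prop:kknuth} only directly addresses the first-row length of an increasing straight-shape tableau, so its column analog needs a brief supplementary argument---most cleanly via a Greene-type statement for \K-Knuth equivalence classes, or by directly exhibiting the antichain structure of the heap $\delta_{k-1}$.
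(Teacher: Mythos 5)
The uniqueness half of your argument has a genuine gap at the shape-determination step. You try to pin down the shape of $\T$ using only the longest strictly increasing and strictly decreasing subsequence lengths of $\rhow(\T)$ together with the cell count $\binom{k}{2}$, but this single-subsequence (Greene $j=1$) data does not determine a shape. The specific claim that ``the only sequence of at least $k-1$ weakly increasing column lengths, each at most $k-1$, summing to $\binom{k}{2}$, is $(1,2,\ldots,k-1)$'' is false already for small $k$: for $k=3$ take $(1,1,1)$, and for $k=4$ take $(2,2,2)$ or $(1,1,2,2)$. Even if you add the dual row bounds (at least $k-1$ rows, every row of length at most $k-1$), the shape is still not forced once $k\geq 5$: the $180^\circ$ rotation of $(4,4,1,1)$ has $10=\binom{5}{2}$ cells, exactly four rows and four columns, and all rows and columns of length at most $4$, so it passes every test you impose while not being the rotated staircase. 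Two supporting steps are also unsound as written: (i) the assertion that the longest strictly increasing subsequence of $\rhow(\T)$ equals $k-1$ is justified by assuming the $K$-rectification of $\T$ has shape $\delta_{k-1}$, but $K$-rectification need not preserve the number of boxes and nothing you cite identifies its shape---\Cref{prop:kknuth} only trades the question for the unknown first-row length of that rectification; (ii) the heap-antichain route to the column bound fails, since a column of $\T$ may contain consecutive letters $i,i+1$ (as in the true $\T_\pi$), which label \emph{comparable} heap elements, so columns are not antichains, and the ``reversal'' alternative presupposes the same unproven increasing-subsequence fact for $\pi^{-1}$.

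For comparison, the paper's proof avoids extremal-subsequence counting entirely: it proves the statement for any Grassmannian $\tau$, reduces the anti-straight case to the straight case by reflecting across the antidiagonal and replacing $s_i$ by $s_{a-i}$ (conjugation by $w_\circ$ preserves avoidance of $321$ and $2143$), and then uses that the reduced words of a vexillary permutation form a single Coxeter--Knuth class, which for a fully commutative element involves no braid moves and hence is an ordinary Knuth class; a Knuth class contains a unique reading word of a straight-shape tableau, giving both existence and uniqueness at once. Your explicit construction for existence is fine (it is exactly the tableau $\U$ of \Cref{fig:U}), but to repair uniqueness along your lines you would need the full family of Greene invariants of the Knuth class (equivalently, the Edelman--Greene/vexillary input above), not just the longest increasing and decreasing subsequence lengths, and the final ``entries are forced'' induction would then still need to be carried out rather than asserted.
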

\begin{proof}
We shall prove the statement more generally for any Grassmannian permutation $\tau$.  As suggested to us by V.~Reiner, it suffices to prove that there is a unique such (straight) Ferrers tableau, since if $\T_\tau'$ is the tableau obtained from $\T_\tau$ by reflecting across the antidiagonal and replacing $s_i \mapsto s_{a-i}$, then $\rhow(\T_\tau')$ is a reduced word for $\wo \tau \wo$ (both of the patterns 321 and 2143 are stable under conjugation by $\wo$).

Since $\tau$ is vexillary, it is well known that the reduced words of $\tau$ form a single Coxeter-Knuth equivalence class. Since $\tau$ is also fully-commutative, we have that, in the absence of braid moves, this Coxeter-Knuth class reduces to an ordinary Knuth equivalence class.  But any (semistandard) Knuth equivalence class contains a unique word that is the reading word of a Ferrers tableau~\cite[Section 2]{fulton1997young}, from which the lemma follows.
\end{proof}

Using the constraints provided by~\Cref{lem:piword} and~\Cref{lem:vex_anti}, we now show that $\U$ is the unique tableau whose shape is an order filter of $\vv$ and that rectifies to $\T^{\rm min}_{\sf u}$.

\begin{proposition}
\label{prop:B_mult-free}
$\R_{{\sf w},{\sf u}}^{\sf v} = \{ \U \} $ and $\R_{{\sf x},{\sf u}}^{\sf v} = \emptyset$ for ${\sf x} \neq {\sf w}$.
\end{proposition}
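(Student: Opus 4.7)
The plan is to establish both containments $\R_{{\sf w},{\sf u}}^{\sf v} \supseteq \{\U\}$ and $\R_{{\sf w},{\sf u}}^{\sf v} \cup \bigsqcup_{{\sf x} \neq {\sf w}} \R_{{\sf x},{\sf u}}^{\sf v} \subseteq \{\U\}$ using a combination of the reading-word bookkeeping of \Cref{lem:piword,lem:vex_anti} and the doubling technique of \Cref{prop:doubling,prop:kknuth}. The easy direction is verifying that $\U$ does rectify to $\T^{\rm min}_{\sf u}$: by construction, $\rhow(\U)$ is the diagonal-by-diagonal reduced word $1\,|\,32\,|\,543\,|\,\cdots$ for the Grassmannian permutation $\pi = [2,4,\ldots,a{+}1,1,3,\ldots,a]$ of \Cref{lem:piword} (cf.~\Cref{fig:U}), which is also the reading word of $\T^{\rm min}_{\sf u}$. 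Hence $\rhow(\U)$ and $\rhow(\T^{\rm min}_{\sf u})$ are weak \K-Knuth equivalent, so by \Cref{thm:jdt_and_knuth_equiv} the tableau $\U$ rectifies to $\T^{\rm min}_{\sf u}$.

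For the other containment, fix an order ideal ${\sf x} \subseteq {\sf v}$ and $\widetilde{\U} \in \R_{{\sf x},{\sf u}}^{\sf v}$. First I would use \Cref{lem:piword} to get the cardinality bound $|{\sf v}/{\sf x}| \geq \binom{k}{2}$, which forces $|{\sf x}| \leq |{\sf v}|-\binom{k}{2} = kn - k^2 = |{\sf w}|$. So if we can show ${\sf w} \subseteq {\sf x}$, then ${\sf x} = {\sf w}$ by cardinality. To force ${\sf w} \subseteq {\sf x}$, I apply the doubling $(\cdot)^D$ of \Cref{sec:kknuth}: by \Cref{prop:doubling}, $\widetilde{\U}^D$ is \K-Knuth equivalent to the straight tableau $(\T^{\rm min}_{\sf u})^D$, which is a $(k-1) \times (k-1)$ square whose first row has length $k-1$. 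By \Cref{prop:kknuth}, every increasing sequence in $\rhow(\widetilde{\U}^D)$ has length at most $k-1$.

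The main obstacle is Step 3: showing that if some cell of ${\sf w}$ is missing from ${\sf x}$, then $\rhow(\widetilde{\U}^D)$ contains a strictly increasing subsequence of length at least $k$, contradicting the bound above. The idea (illustrated in \Cref{fig:double}(b)) is to use the structure of the doubled skew shape: every cell that ${\sf x}$ omits from inside the trapezoid ${\sf w}$ creates, after doubling, an extra column on the ``reflected'' side of ${\sf v}^D$ that can be chained together with cells of $\widetilde{\U}$ sitting on the northeast diagonals of $\vv/\x$. Concretely, if ${\sf x}$ omits a cell in row $i$ of ${\sf w}$ that ${\sf w}$ contains, the column-reading word of $\widetilde{\U}^D$ will have a strictly increasing run of length $\geq k$ built from the diagonals of $\widetilde{\U}$ together with their reflections across the shifted diagonal, since the entries of $\widetilde{\U}$ must strictly increase both along rows and along columns, and the doubling adds a fresh ``leg'' of reflected cells whose entries must continue the increasing sequence. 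This step requires a careful case-by-case construction of the offending subsequence depending on where ${\sf x}$ deviates from ${\sf w}$, and is the technical heart of the argument.

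Once Step 3 gives ${\sf x} \supseteq {\sf w}$ and hence ${\sf x} = {\sf w}$, the tableau $\widetilde{\U}$ has the anti-straight shape ${\sf v}/{\sf w}$ with exactly $\binom{k}{2} = \len(\pi)$ cells. By \Cref{lem:piword} and the weak \K-Knuth relations, $\rhow(\widetilde{\U})$ has Demazure product $\pi$ and length $\binom{k}{2}$, so it is in fact a reduced word for $\pi$. \Cref{lem:vex_anti} then gives the uniqueness $\widetilde{\U} = \T_\pi = \U$, completing the proof.
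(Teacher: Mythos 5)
Your overall plan matches the paper's argument: the easy containment $\U \in \R_{{\sf w},{\sf u}}^{\sf v}$ via reading words, a cardinality bound from \Cref{lem:piword}, the doubling technique from \Cref{prop:doubling,prop:kknuth} to bound longest increasing subsequences by $k-1$, and the vexillary uniqueness argument of \Cref{lem:vex_anti} to pin down the filling. Steps 1, 2 and 4 are essentially correct (in Step 1 you should explicitly invoke the URT property of $\T^{\rm min}_{\sf u}$ from \Cref{thm:urt} to pass from ``some rectification equals $\T^{\rm min}_{\sf u}$'' to ``the rectification equals $\T^{\rm min}_{\sf u}$,'' but this is a minor omission).

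The genuine gap is Step 3, which you yourself flag as ``the technical heart'' and then leave as a gesture. Your geometric description -- ``a strictly increasing run of length $\geq k$ built from the diagonals of $\widetilde\U$ together with their reflections'' -- doesn't quite point at the right structure. The paper's formulation is cleaner and avoids your anticipated case analysis: it shows directly that the $r$th column of $\widetilde\U$ (counted from the right) can have at most $k-r$ cells. Indeed if that column had more than $k-r$ cells, then under doubling it becomes a row of $\widetilde\U^D$ of length $>k-r$, and concatenating the $r$th row of $\widetilde\U^D$ with the last $r-1$ entries of the bottom row of $\widetilde\U$ produces a strictly increasing subsequence of length at least $k$ in $\rhow(\widetilde\U^D)$ (this is what the red entries in \Cref{fig:double}(b) illustrate) -- contradicting the $(k-1)\times(k-1)$ square shape of $(\T^{\rm min}_{\sf u})^D$. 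Since an anti-straight shape is determined by its column lengths, this column bound immediately forces the shape of $\widetilde\U$ to lie inside $\vv/\w$, and the cardinality bound from \Cref{lem:piword} then gives equality. Your ``${\sf w}\subseteq{\sf x}$ then equality by counting'' route is logically equivalent, but without the column-length reformulation you are left constructing an offending subsequence separately for each possible way ${\sf x}$ can fall short of ${\sf w}$, which is exactly the open-ended case analysis you are trying to avoid.
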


\begin{proof}
We first show that $\U \in \R_{{\sf w},{\sf u}}^{\sf v}$.  Since $\rhow(\T^{\rm min}_{\sf u}) = \rhow(\U)$ and $\T^{\rm min}_{\sf u}$ is a URT by \Cref{thm:urt}, $\rect_{\T}(\U) = \T^{\rm min}_{\sf u}$ for any $\T \in \IT({\sf w})$ by~\cite[Theorem 7.8]{buch2014k}. By definition, we then conclude $\U \in \R_{{\sf w},{\sf u}}^{\sf v}$.

Let $\widetilde \U \in \R_{{\sf x},{\sf u}}^{\sf v}$ for some ${\sf x}$.  We now argue that $\widetilde \U$ is necessarily of skew shape $\vv / \w$.  By~\Cref{prop:doubling,prop:kknuth}, since the shape of the doubling $(\T^{\rm min}_{\sf u})^D$ is a $(k-1) \times (k-1)$ square (see~\Cref{fig:double}(a)), the longest strictly increasing subsequence in $\rhow(\widetilde \U^D)$ is of length $k-1$.  We claim that this forces the $r$th column of $\widetilde \U$ (from the right) to have at most $k-r$ cells: if the $r$th column of $\widetilde \U$ has more than $k-r$ cells, then the $r$th row of $\widetilde \U^D$, along with the last $r-1$ entries in the bottom row of $\widetilde \U$, form a strictly increasing sequence of length at least $k$ in $\rhow(\widetilde \U^D)$.  Since there are at least ${k \choose 2}$ entries in $\widetilde \U$ by~\Cref{lem:piword}, the shape of $\widetilde \U$ is ${\sf v} / {\sf w}$.  This construction is illustrated in~\Cref{fig:double}(b). 
 
It remains to show that the fillings of $\widetilde \U$ and $\U$ are equal.   By~\Cref{lem:piword}, the Demazure product of $\rhow(\widetilde \U)$ is $\pi$. But since $\rhow(\widetilde \U)$ has length ${k \choose 2} = {\sf len}(\pi)$, $\rhow(\widetilde \U)$ is then a reduced word for $\pi$.  Since both $\rhow(\widetilde \U)$ and $\rhow(\U)$ are reduced words for the Grassmannian permutation $\pi$, and both $\widetilde \U$ and $\U$ have anti-straight shapes, \Cref{lem:vex_anti} implies  $\widetilde{\U}=\U$, as desired.
\end{proof}

By combining \Cref{thm:gen_main_thm} and \Cref{prop:B_mult-free}, we conclude~\Cref{thm:main_thm1}(B).

\subsection{(H)}

\begin{figure}
\raisebox{-0.5\height}{\begin{tikzpicture}[scale=.35]
	\draw[thick] (1 cm,1) -- (6 cm,6);
    \draw[thick] (3 cm,5) -- (5 cm,7);
	\draw[thick] (3 cm,7) -- (5 cm,9);
    \draw[thick] (2 cm,8) -- (6 cm,12);
    \draw[thick] (1 cm,9) -- (5 cm,13);
    \draw[thick] (3 cm,13) -- (4 cm,14);
    \draw[thick] (1 cm,17) -- (6 cm,12);
    \draw[thick] (3 cm,13) -- (5 cm,11);
    \draw[thick] (3cm,11) -- (5 cm,9);
    \draw[thick] (2cm,10) -- (6cm,6);
    \draw[thick] (1cm,9) -- (5cm,5);
    \draw[thick] (3cm,5) -- (4cm,4);
    \draw[thick,solid,fill=gray] (1cm,1) circle (.5cm) node[white] {$1$};
    \draw[thick,solid,fill=gray] (2cm,2) circle (.5cm)  node[white] {$2$};
    \draw[thick,solid,fill=gray] (3cm,3) circle (.5cm)  node[white] {$3$};
    \draw[thick,solid,fill=gray] (4cm,4) circle (.5cm)  node[white] {$4$};
    \draw[thick,solid,fill=gray] (5cm,5) circle (.5cm)  node[white] {$5$};
    \draw[thick,solid,fill=gray] (6cm,6) circle (.5cm)  node[white] {$6$};
    \draw[ultra thick,solid,fill=gray] (3cm,5) circle (.5cm) ;
    \draw[ultra thick,solid,fill=gray] (4cm,6) circle (.5cm) ;
    \draw[ultra thick,solid,fill=gray] (5cm,7) circle (.5cm) ;
    \draw[ultra thick,solid,fill=gray] (3cm,7) circle (.5cm) ;
    \draw[ultra thick,solid,fill=gray] (4cm,8) circle (.5cm) ;
    \draw[ultra thick,solid,fill=gray] (5cm,9) circle (.5cm) ;
    \draw[ultra thick,solid,fill=gray] (2cm,8) circle (.5cm) ;
    \draw[ultra thick,solid,fill=gray] (3cm,9) circle (.5cm) ;
    \draw[ultra thick,solid,fill=white] (4cm,10) circle (.5cm)  node {$1$};
    \draw[ultra thick,solid,fill=white] (5cm,11) circle (.5cm)  node {$4$};
    \draw[thick,solid,fill=white] (6cm,12) circle (.5cm) ;
    \draw[ultra thick,solid,fill=gray] (1cm,9) circle (.5cm) ;
    \draw[ultra thick,solid,fill=white] (2cm,10) circle (.5cm)  node {$2$};
    \draw[ultra thick,solid,fill=white] (3cm,11) circle (.5cm)  node {$3$};
    \draw[ultra thick,solid,fill=white] (4cm,12) circle (.5cm)  node {$5$};
    \draw[thick,solid,fill=white] (5cm,13) circle (.5cm) ;
    \draw[ultra thick,solid,fill=white] (3cm,13) circle (.5cm)  node {$6$};
    \draw[thick,solid,fill=white] (4cm,14) circle (.5cm) ;
    \draw[thick,solid,fill=white] (3cm,15) circle (.5cm) ;
    \draw[thick,solid,fill=white] (2cm,16) circle (.5cm) ;
    \draw[thick,solid,fill=white] (1cm,17) circle (.5cm) ;
\end{tikzpicture}}
\raisebox{-0.5\height}{\begin{tikzpicture}[scale=.35]
	\draw[thick] (-2 cm,-2) -- (4 cm,4);
    \draw[thick] (2 cm,4) -- (8 cm,10);
    \draw[thick] (2 cm,4) -- (3 cm,3);
    \draw[thick] (3 cm,5) -- (4 cm,4);
\draw[thick,solid,fill=gray] (-2cm,-2) circle (.5cm) node[white] {$1$};
    \draw[thick,solid,fill=gray] (-1cm,-1) circle (.5cm) node[white] {$2$};
   \draw[thick,solid,fill=gray] (0cm,0) circle (.5cm) node[white] {$3$};
    \draw[ultra thick,solid,fill=gray] (1cm,1) circle (.5cm);
    \draw[ultra thick,solid,fill=gray] (2cm,2) circle (.5cm) ;
    \draw[ultra thick,solid,fill=gray] (3cm,3) circle (.5cm) ;
    \draw[ultra thick,solid,fill=gray] (4cm,4) circle (.5cm) ;
    \draw[ultra thick,solid,fill=gray] (2cm,4) circle (.5cm) ;
    \draw[ultra thick,solid,fill=white] (3cm,5) circle (.5cm) node {$1$};
    \draw[ultra thick,solid,fill=white] (4cm,6) circle (.5cm) node {$2$};
    \draw[ultra thick,solid,fill=white] (5cm,7) circle (.5cm) node {$3$};
    \draw[thick,solid,fill=white] (6cm,8) circle (.5cm) ;
    \draw[thick,solid,fill=white] (7cm,9) circle (.5cm) ;
   \draw[thick,solid,fill=white] (8cm,10) circle (.5cm) ;
\end{tikzpicture}}
\caption{The shape $\vv$ is the set of all boxes that are either gray or thick-bordered; the shape $\w$ is the set of gray boxes; and the shape $\vv/\uu$ is the set of all thick-bordered boxes.  $\T_{\uu}^{\rm min}$ is the bottom tableau consisting of the gray boxes with white numbers.  The top tableau consisting of the white boxes with black numbers is the unique tableau $\U$ whose shape is an order filter in $\vv$ that rectifies to $\T_{\uu}^{\rm min}$.}
\label{fig:Uandtminue7}
\end{figure}

For the triple labeled (H), let the tableau $\U$ and its rectification be as illustrated on the left in \Cref{fig:Uandtminue7}. It is a straightforward but lengthy calculation to verify that $\R_{{\sf w}, {\sf u}}^{{\sf v}} = \{\U\}$ and that $\R_{{\sf x}, {\sf u}}^{\sf v} = \emptyset$ for ${\sf x} \neq {\sf w}$. We performed this calculation via computer, explicitly rectifying all applicable tableaux.  This case of \Cref{thm:main_thm1} now follows from~\Cref{thm:gen_main_thm}.

\subsection{(I)}
For the triple labeled (I), the shape $\vv / {\sf x}$ must be an order filter of size $\lfloor\frac{2n-2}{2}\rfloor$ in $\vv$. There is a unique such order filter in $\vv$---the shape $\vv/\w$---and because $\vv/\w$ is a chain, it has a unique filling that rectifies to $T_{\uu}^{\rm min}$.  By~\Cref{thm:gen_main_thm}, this proves the final case of~\Cref{thm:main_thm1}.  We refer the reader the the illustration on the right in \Cref{fig:Uandtminue7}.  

\subsection{Other Doppelg\"angers}

\cite[Figure 9]{thomas2009combinatorial} details the embeddings \[\Lambda_{\OG(5,10)} \hookrightarrow \Lambda_{\mathbb{OP}^2}\hookrightarrow \Lambda_{G_\omega(\mathbb{O}^3,\mathbb{O}^6)}.\] The three embeddings so specified give three pairs of additional (but completely trivial) doppelg\"angers.  

On the other hand, it is not the case that every embedding of a minuscule poset inside another gives a pair of doppelg\"angers.  For example, one can check that the embedding of $\Lambda_{\Gr(2,6)}$ inside $\Lambda_{\mathbb{OP}^2}$ indeed corresponds under~\Cref{thm:main1} to a multiplicity-free product in both ordinary cohomology and \K-theory; however, these products have three and five terms, respectively.


\section{Future Work}
\label{sec:future_work}

In~\cite{thomas2005multiplicity}, H.~Thomas and A.~Yong characterize all multiplicity-free products of Schubert classes in $\Gr(k,n)$ (extending work of J.~Stembridge~\cite{stembridge2001multiplicity}).  It is natural to wish to extend this to all minuscule flag varieties; except for the single remaining infinite family (up to isomorphism), this is a finite check.

\begin{problem}
    Classify all multiplicity-free products of cohomological Schubert classes in all minuscule flag varieties.
   \label{prob:multfrees}
\end{problem}

As pointed out~\Cref{rem:notnecessarily}, multiplicity-free products in cohomology are not necessarily multiplicity-free in \K-theory.  It would be interesting to apply A.~Knutson's~\Cref{thm:knutson_mult_free} to classify the latter products.  To our knowledge, this is open even in the Grassmannian case (although additional combinatorial tools are available in that case, e.g.~\cite{snider}).

\begin{problem}
     Classify all multiplicity-free products of \K-theoretic Schubert classes in all minuscule flag varieties.
     \label{prob:multfreek}
\end{problem}

Given any multiplicity-free product from~\Cref{prob:multfrees} or~\Cref{prob:multfreek}, \Cref{thm:gen_main_thm} then gives a combinatorial identity.  We have \emph{not} recorded in this paper all such identities---or even all such identities that lead to a pair of doppelg\"angers.

More generally, it is possible to derive poset identities (relating the number of standard or increasing fillings) by comparing Richardson varieties.  V.~Reiner, K.~Shaw, and S.~van Willigenburg have partial results in this direction for Grassmannians~\cite{reiner2007coincidences}.\footnote{We thank F.~Bergeron for pointing out that~\cite{reiner2007coincidences} was of the same spirit as the problems we have been considering.}

\begin{problem}
When do the (cohomological or \K-theoretic) classes of two Richardson varieties have the same expansion into Schubert classes?
\end{problem}

\subsection{Another coincidence}

There is one further poset identity involving the last coincidental type, relating $n$-staircases (the root poset of type $A_n$) and shifted $n$-staircases (the cominuscule poset of type $(C_n,1)$).  Although it does not fit directly into our framework, it seems closely related.


Recall that $\Lambda_{\LG(n,2n)}$ is the cominuscule poset of type $(C_n,1)$ (a shifted staircase of order $n$).  The \defn{diagonal} of $\Lambda_{\LG(n,2n)}$ is the set of its elements labeled by long roots of $\Phi^+_{C_n}$.  Let $\overline{\SYT}(\Lambda_{\LG(n,2n)})$ denote the product $[2^{n(n-1)/2}]\times \SYT(\Lambda_{\LG(n,2n)})$ (with elements represented as shifted standard tableaux with any set of off-diagonal entries barred), and let  $\overline{\PP}^{[2p]}(\Lambda_{\LG(n,2n)})$ be the subset of $\PP^{[2p]}(\Lambda_{\LG(n,2n)})$ with only even heights on the diagonal.  These definitions are illustrated in~\Cref{ex:ex2}.

\begin{example}\rm For $n=2$ and $p=1$, $\overline{\SYT}(\Lambda_{\LG(2,4)})$ and $\overline{\PP}^{[2]}(\Lambda_{\LG(2,4)})$ (first row), and (the duals of) $\SYT(\Phi^+_{A_2})$ and $\PP^{[1]}(\Phi^+_{A_2})$ (second row) are illustrated below.  The color white stands for height zero, gray for height one, and black for height two.  The modified fillings for $\Lambda_{\LG(n,2n)}$ have no barred element nor the color gray in their leftmost columns.
\vspace{0.5em}
\begin{center}\tiny
\begin{tikzpicture}[scale=.3]
	\draw[thick] (5 cm,2) -- (6 cm,3) -- (5cm,4);
    \draw[  thick,solid,fill=white] (5cm,2) circle (.5cm) node {1};
    \draw[ thick,solid,fill=white] (6cm,3) circle (.5cm) node {2};
    \draw[  thick,solid,fill=white] (5cm,4) circle (.5cm) node {3};
\end{tikzpicture}\hspace{10pt}
\begin{tikzpicture}[scale=.3]
	\draw[thick] (5 cm,2) -- (6 cm,3) -- (5cm,4);
    \draw[  thick,solid,fill=white] (5cm,2) circle (.5cm) node {1};
    \draw[ thick,solid,fill=white] (6cm,3) circle (.5cm) node {$\overline{2}$};
    \draw[  thick,solid,fill=white] (5cm,4) circle (.5cm) node {3};
\end{tikzpicture}
\hspace{5em}
\begin{tikzpicture}[scale=.3]
	\draw[thick] (5 cm,2) -- (6 cm,3) -- (5cm,4);
    \draw[  thick,solid,fill=white] (5cm,2) circle (.5cm);
    \draw[ thick,solid,fill=white] (6cm,3) circle (.5cm);
    \draw[  thick,solid,fill=white] (5cm,4) circle (.5cm);
\end{tikzpicture}\hspace{10pt}
\begin{tikzpicture}[scale=.3]
	\draw[thick] (5 cm,2) -- (6 cm,3) -- (5cm,4);
    \draw[  thick,solid,fill=white] (5cm,2) circle (.5cm);
    \draw[ thick,solid,fill=white] (6cm,3) circle (.5cm);
    \draw[  thick,solid,fill=black] (5cm,4) circle (.5cm);
\end{tikzpicture}\hspace{10pt}
\begin{tikzpicture}[scale=.3]
	\draw[thick] (5 cm,2) -- (6 cm,3) -- (5cm,4);
    \draw[  thick,solid,fill=white] (5cm,2) circle (.5cm);
    \draw[ thick,solid,fill=gray] (6cm,3) circle (.5cm);
    \draw[  thick,solid,fill=black] (5cm,4) circle (.5cm);
\end{tikzpicture}\hspace{10pt}
\begin{tikzpicture}[scale=.3]
	\draw[thick] (5 cm,2) -- (6 cm,3) -- (5cm,4);
    \draw[  thick,solid,fill=white] (5cm,2) circle (.5cm);
    \draw[ thick,solid,fill=black] (6cm,3) circle (.5cm);
    \draw[  thick,solid,fill=black] (5cm,4) circle (.5cm);
\end{tikzpicture}\hspace{10pt}
\begin{tikzpicture}[scale=.3]
	\draw[thick] (5 cm,2) -- (6 cm,3) -- (5cm,4);
    \draw[  thick,solid,fill=black] (5cm,2) circle (.5cm);
    \draw[ thick,solid,fill=black] (6cm,3) circle (.5cm);
    \draw[  thick,solid,fill=black] (5cm,4) circle (.5cm);
\end{tikzpicture}

\vspace{1em}
\begin{tikzpicture}[scale=.3]
    \draw[thick] (3 cm,4) -- (4 cm,3) -- (5 cm,4);
    \draw[thick,solid,fill=white] (3cm,4) circle (.5cm) node {3};
    \draw[thick,solid,fill=white] (5cm,4) circle (.5cm) node {2};
    \draw[thick,solid,fill=white] (4cm,3) circle (.5cm) node {1};
\end{tikzpicture}
\begin{tikzpicture}[scale=.3]
    \draw[thick] (3 cm,4) -- (4 cm,3) -- (5 cm,4);
    \draw[thick,solid,fill=white] (3cm,4) circle (.5cm) node {2};
    \draw[thick,solid,fill=white] (5cm,4) circle (.5cm) node {3};
    \draw[thick,solid,fill=white] (4cm,3) circle (.5cm) node {1};
\end{tikzpicture}
\hspace{5em}
\begin{tikzpicture}[scale=.3]
    \draw[thick] (3 cm,4) -- (4 cm,3) -- (5 cm,4);
    \draw[thick,solid,fill=white] (3cm,4) circle (.5cm);
    \draw[thick,solid,fill=white] (5cm,4) circle (.5cm);
    \draw[thick,solid,fill=white] (4cm,3) circle (.5cm);
\end{tikzpicture}
\begin{tikzpicture}[scale=.3]
    \draw[thick] (3 cm,4) -- (4 cm,3) -- (5 cm,4);
    \draw[thick,solid,fill=gray] (3cm,4) circle (.5cm);
    \draw[thick,solid,fill=white] (5cm,4) circle (.5cm);
    \draw[thick,solid,fill=white] (4cm,3) circle (.5cm);
\end{tikzpicture}
\begin{tikzpicture}[scale=.3]
    \draw[thick] (3 cm,4) -- (4 cm,3) -- (5 cm,4);
    \draw[thick,solid,fill=white] (3cm,4) circle (.5cm);
    \draw[thick,solid,fill=gray] (5cm,4) circle (.5cm);
    \draw[thick,solid,fill=white] (4cm,3) circle (.5cm);
\end{tikzpicture}
\begin{tikzpicture}[scale=.3]
    \draw[thick] (3 cm,4) -- (4 cm,3) -- (5 cm,4);
    \draw[thick,solid,fill=gray] (3cm,4) circle (.5cm);
    \draw[thick,solid,fill=gray] (5cm,4) circle (.5cm);
    \draw[thick,solid,fill=white] (4cm,3) circle (.5cm);
\end{tikzpicture}
\begin{tikzpicture}[scale=.3]
    \draw[thick] (3 cm,4) -- (4 cm,3) -- (5 cm,4);
    \draw[thick,solid,fill=gray] (3cm,4) circle (.5cm);
    \draw[thick,solid,fill=gray] (5cm,4) circle (.5cm);
    \draw[thick,solid,fill=gray] (4cm,3) circle (.5cm);
\end{tikzpicture}
\end{center}\normalsize
\label{ex:ex2}
\end{example}

The following theorem summarizes  work in~\cite{purbhoo2014marvellous} and~\cite{sheats1999symplectic}. The equinumerosity (AP) was first proved \emph{non-bijectively} by R.~Proctor~\cite{proctor1990new}.

\begin{theorem}
There is an explicit (symplectic jeu-de-taquin) bijection between
\begin{flalign*}
\mathrm{(AP)}	&&  \overline{\PP}^{[2p]}\left(\Lambda_{\LG(n,2n)}\right) \simeq \PP^{[\p]}\left(\Phi^+_{A_n}\right), && \text{(J.~Sheats~\cite{sheats1999symplectic})}
\end{flalign*}
and there is also an explicit (jeu-de-taquin) bijection between
\begin{flalign*}
\mathrm{(AS)}	&& \overline{\SYT}\left(\Lambda_{\LG(n,2n)}\right) \simeq \SYT\left(\Phi^+_{A_n}\right). && \text{(K.~Purbhoo~\cite{purbhoo2014marvellous})}
\end{flalign*}
	\label{thm:main_thm2}
\end{theorem}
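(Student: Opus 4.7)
The plan is to leverage the framework established earlier in the paper (particularly \Cref{prop:bij_PP_IT}, which identifies plane partitions with increasing tableaux, and the jeu-de-taquin machinery of \Cref{sec:structure_coeffs}) to construct explicit bijections for both (AP) and (AS). The overarching strategy is parallel for the two statements, but (AP) requires a symplectic variant of jeu-de-taquin adapted to type $C_n$, while (AS) uses ordinary jeu-de-taquin augmented with $2$-adic bookkeeping.

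For (AS), I would first apply \Cref{prop:bij_PP_IT} to replace $\SYT(\Lambda_{\LG(n,2n)})$ and $\SYT(\Phi^+_{A_n})$ with sets of increasing tableaux (in fact maximal-value ones, i.e., standard). Following K.~Purbhoo, the bijection is built from a sequence of ``marvellous'' jeu-de-taquin slides taking a standard filling of the shifted staircase to a standard filling of the ordinary staircase; the extra $2^{n(n-1)/2}$ factor arises because, at each off-diagonal cell, a slide involving two equal-candidate entries forces a choice between two reflected configurations, which we record as a bar $\in \{0,1\}$. Concretely: the off-diagonal cells of $\Lambda_{\LG(n,2n)}$ biject with the ``choice points'' encountered during the rectification, so the data $[2^{n(n-1)/2}] \times \SYT(\Lambda_{\LG(n,2n)})$ is exactly the data required to uniquely reconstruct the slide sequence. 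The inverse is constructed by running the slides backward, using the bar data to resolve each ambiguity.

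For (AP), the plan is to use J.~Sheats' symplectic jeu-de-taquin. Passing through \Cref{prop:bij_PP_IT}, the condition that diagonal values (indexed by long roots of $\Phi^+_{C_n}$) are even translates, in the symplectic-tableau language, to the ``Berele punctuation'' condition on type $C_n$ tableaux of shape $\Lambda_{\LG(n,2n)}$. Sheats' symplectic jeu-de-taquin differs from ordinary jeu-de-taquin in that slides across the diagonal are forced to ``jump by two,'' exactly cancelling the factor of $2$ in the height and simultaneously converting the shifted shape into the (unshifted) staircase $\Phi^+_{A_n}$. The bijection then proceeds by rectifying the symplectic tableau into a straight shape on $\Phi^+_{A_n}$, with the inverse given by symplectic reverse slides.

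The main obstacle in both cases is verifying that the rectification process terminates at the correct target shape and yields a genuine bijection (not merely an injection). For (AS), this amounts to Purbhoo's key lemma that the resulting straight shape is always the ordinary staircase, independent of the choice of slide sequence but depending measurably on the bar data. For (AP), the analogous fact is far more delicate: symplectic jeu-de-taquin lacks the clean confluence property enjoyed by ordinary jeu-de-taquin, so one must instead develop a separate symplectic analogue of unique rectification targets to show that straight-shape symplectic tableaux with even diagonal rectify canonically to $\Phi^+_{A_n}$-shaped objects. Sheats' original argument handles this via an intricate case analysis on the behavior of slides near the diagonal; any streamlined proof would likely pass through the doubling trick of \Cref{prop:doubling} to convert symplectic slides into pairs of ordinary slides, then invoke the confluence of ordinary jeu-de-taquin. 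This is the step I expect to consume the bulk of the proof.
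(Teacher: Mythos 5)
First, a point of calibration: the paper does not prove this theorem at all --- it is stated explicitly as a summary of prior work, with (AP) attributed to J.~Sheats and (AS) to K.~Purbhoo (and the equinumerosity (AP) first proved non-bijectively by R.~Proctor). So the ``proof'' in the paper is the citation, and your proposal, insofar as it defers the decisive steps to Sheats' and Purbhoo's papers, is taking essentially the same route. If, however, you mean to actually reconstruct the two bijections, the sketch as written has concrete inaccuracies.

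The mechanism you give for the factor $2^{n(n-1)/2}$ in (AS) is not right. Standard tableaux have distinct entries, so jeu-de-taquin slides on them are deterministic: there are no ``equal-candidate entries'' forcing binary choices, and no reason such choice points would biject with the off-diagonal cells. In Purbhoo's construction (recalled in the paper's last section), the map $\fold$ runs the other way: one starts with $\T \in \SYT\left(\Phi^+_{A_n}\right)$, reflects it across the diagonal onto a barred alphabet to obtain a standard filling of $\Lambda_{\Gr(n,2n)}$, and then folds the two alphabets together; the bars in the resulting element of $\overline{\SYT}\left(\Lambda_{\LG(n,2n)}\right)$ record which cells carry letters of the reflected copy, and that is where the arbitrary subset of off-diagonal bars comes from. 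For (AP), your proposed shortcut --- passing the even-diagonal condition through a ``Berele punctuation'' dictionary and reducing symplectic slides to pairs of ordinary slides via the doubling of \Cref{prop:doubling} --- is speculative: \Cref{prop:doubling} concerns \K-Knuth equivalence of increasing shifted tableaux, not symplectic jeu-de-taquin, and the paper itself warns that even the obvious increasing-tableau modification of Purbhoo's folding fails to be a bijection (barred letters can land on the diagonal), posing a jeu-de-taquin proof of (AP) in this framework as an open problem. So the confluence/termination issues you flag are real, but the shortcuts you propose would require substantial new arguments rather than the cited machinery.
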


We suspect that these poset identities are related to our theory above.  In~\cite{purbhoo2014marvellous}, K.~Purbhoo describes an embedding \[\Theta: \mathsf{LG}(n,2n) \hookrightarrow \mathsf{Gr}(n,2n+1).\]  As a corollary, one obtains a poset embedding $\Theta: \Lambda_{\LG(n,2n)} \hookrightarrow \Lambda_{\Gr(n,2n+1)}$. Furthermore, there is an embedding $\chi$ of $\Phi^+_{A_n}$ inside $\Lambda_{\mathsf{Gr}(n,2n)}$ given by the element $w := \prod_{j=1}^{n}\prod_{i=n-j+1}^{2n-2j+1} s_i \in W(A_{2n})$.   

  We can embed an element of $\SYT(\Phi^+_{A_n})$ on the standard alphabet $[\kk]$ as a standard tableau of shape $\Lambda_{\Gr(n,2n)}$ by reflecting across the diagonal and barring this reflection---extending the standard alphabet to $[\kk]\sqcup [\underline{\kk}]$.  K.~Purbhoo~\cite{purbhoo2014marvellous} then gives a bijection \[\fold: \SYT(\Phi^+_{A_n}) \to \overline{\SYT}(\Lambda_{\LG(n,2n)})\] using an operation similar to the infusion involution---but rather than completely slide one alphabet past another, he instead \defn{folds} the two alphabets together, so that the total ordering \[[\kk]\sqcup [\underline{\kk}] = 1<2<\cdots<\kk<\overline{\kk}<\cdots<\overline{2}<\overline{1}\] becomes the total ordering
\[[\overline{\kk}] \shuffle [\kk]:=\overline{1}<1<\overline{2}<2<\cdots<\overline{\kk}<\kk.\]

Writing $\overline{\IT}^{[\kk]}(\Lambda_{\LG(n,2n)})$ for the set of increasing tableaux of shape $\Lambda_{\LG(n,2n)}$ on the alphabet $[\overline{\kk}]\shuffle[\kk]$ with any set of off-diagonal entries barred, we have an easy bijection \[\overline{\PP}^{[2p]}(\Lambda_{\LG(n,2n)})\simeq \overline{\IT}^{[2p+n-1]}(\Lambda_{\LG(n,2n)}).\]
   The bijection in~\Cref{prop:bij_PP_IT} similarly gives \[\PP^{[\p]}\left(\Phi^+_{A_n}\right)\simeq \IT^{[p+n]}(\Phi^+_{A_n}),\] and we can perform the embedding of $\IT^{[\kk]}(\Phi^+_{A_n})$ into $\IT^{[\kk]\sqcup[\underline{\kk}]}(\Gr(n,2n))$ by reflecting and barring, as in the standard case.  It is perhaps surprising that the obvious increasing modification of K.~Purbhoo's folding is \emph{not} a bijection between $\overline{\IT}^{[2p+n-1]}(\Lambda_{\LG(n,2n)})$ and $\IT^{[p+n]}(\Phi^+_{A_n})$---folding may result in barred letters on the diagonal.

\begin{problem}\rm
    Prove R.~Proctor's identity (\Cref{thm:main_thm2}, (AP)) using a \K-theoretic jeu-de-taquin extension of K.~Purbhoo's bijection  (\Cref{thm:main_thm2}, (AS)).
\end{problem}

\begin{figure}[htbp]	
\[\begin{array}{ccccc} \Lambda_\X & \hookrightarrow & \Lambda_\Z & \xmapsto{\mathsf{fold}} & \Phi^+_\Y \\ \hline  \LG(n,2n)& & \mathsf{Gr}(n,2n) & & A_n  \end{array} \hspace{5em}
\raisebox{-0.5\height}{
\begin{tikzpicture}[scale=.35]
	\draw[thick] (3 cm,0) -- (6 cm,3);
    \draw[thick] (2 cm,1) -- (5 cm,4);
    \draw[thick] (1 cm,2) -- (4 cm,5);
    \draw[thick] (6 cm,3) -- (4 cm,5);
    \draw[thick] (5 cm,2) -- (3 cm,4);
    \draw[thick] (4 cm,1) -- (2 cm,3);
    \draw[thick] (3 cm,0) -- (1 cm,2);
    \draw[thick,solid,fill=gray] (3cm,0) circle (.5cm) ;
    \draw[thick,solid,fill=gray] (2cm,1) circle (.5cm) ;
    \draw[ultra thick,solid,fill=gray] (4cm,1) circle (.5cm) ;
    \draw[thick,solid,fill=gray] (1cm,2) circle (.5cm) ;
    \draw[thick,solid,fill=gray] (3cm,2) circle (.5cm) ;
    \draw[ultra  thick,solid,fill=gray] (5cm,2) circle (.5cm) ;
    \draw[thick,solid,fill=white] (2cm,3) circle (.5cm) ;
    \draw[ultra thick,solid,fill=white] (4cm,3) circle (.5cm) ;
    \draw[ultra thick,solid,fill=white] (6cm,3) circle (.5cm) ;
    \draw[thick,solid,fill=white] (3cm,4) circle (.5cm) ;
    \draw[ultra  thick,solid,fill=white] (5cm,4) circle (.5cm) ;
    \draw[ultra  thick,solid,fill=white] (4cm,5) circle (.5cm) ;
\end{tikzpicture}}\]
\caption{The analogues of~\Cref{fig:triples,fig:coincidental_top_half_minuscules} for an additional identity.   On the right is the poset $\Lambda_{\mathsf{Gr}(3,6)}$; the vertices with thick borders correspond to the embedding $\Theta(\Lambda_{\LG(3,6)})$, while the gray vertices represent $\chi(\Phi^+_{A_3})$.}
\end{figure}

\subsection{Reduced Words}

This relationship between linear extensions and reduced words has two different combinatorial proofs in types $A_n$ and $B_n$, which are related in~\cite{hamaker2013relating,billey2014coxeter}.  We refer the reader to~\cite{lascoux1995polynomes} for additional historical context.

\begin{itemize}
\item One proof is via modified RSK insertion algorithms due to P.~Edelman and C.~Greene in type $A_n$, and W.~Kraskiewicz in type $B_n$~\cite{edelman1987balanced,kraskiewcz1989reduced,lam1995b}---these insertions read and insert a reduced word for $\wo(A_n)$ or $\wo(B_{n,k})$ letter by letter to produce a standard tableau of shape $\w$ (a staircase or a shifted trapezoid).  The bijection is concluded using~\Cref{thm:linandred}, which canonically bijects $\SYT(\w)$ with $\Red(w)$.  The map backwards proceeds via \emph{promotion} on the standard tableau encoding a reduced word of $w$.

\item Another proof is via \emph{Little bumps} and \emph{signed Little bumps}~\cite{little2003combinatorial,billey2014coxeter}.  Thinking of a reduced word as a wiring diagram, these methods take a reduced word for $\wo(A_n)$ or $\wo(B_{n,k})$ and systematically eliminate all braid moves---by introducing additional strands---to obtain a reduced word for $w$.  Little bumps may be viewed as a combinatorialization of transition for Schubert polynomials (due in type $A_n$ to A.~Lascoux and M.-P.~Sch\"utzenberger)~\cite{billey1995schubert,billey1998transition}.
\end{itemize}

The Edelman-Greene promotion bijection between standard tableaux of root poset shape and reduced words for the longest element works in all coincidental types, which suggests the following open problem.

\begin{problem}\rm
    Uniformly develop a theory of insertion algorithms and Little bumps to explain the relation  $\Red(\wo(Y)) \simeq \Red(w)$ in the coincidental types.
\end{problem}

\begin{remark}\rm
The first step towards a theory of Little bumps in types $I_2(m)$ and $H_3$ would be the representation of reduced words using wiring diagrams.  Such representations exist, since both $W(I_2(m))$ and $W(H_3)\simeq {\sf Alt}_5 \times \mathbb{Z}/2\mathbb{Z}$ have (small) permutation representations coming from their actions on the parabolic subgroups identified in~\Cref{sec:noncrystroot}; the usual permutation representations of $W(A_n)$ and $W(B_n)$ may be obtained in a similar manner.  In analogy to the situation in type $A$ and $B$, using the embedings of~\Cref{sec:root_posets_embed} we expect that the Little bumps of type $H_3$ to take place in $W(E_7)$, while those of $I_2(m)$ should take place in a Weyl group of type $D$.

\end{remark}


\section*{Acknowledgements}
Some of the foundational ideas of this paper were developed when ZH, OP and NW attended the ``Dynamical Algebraic Combinatorics'' workshop
at the American Institute of Mathematics (AIM) in March 2015. 
The authors would like to thank AIM for hosting this workshop, as well as Jim Propp, Tom Roby, and Jessica Striker for their organizational efforts.

The authors are grateful for helpful and inspiring conversations with Bob Proctor, Vic Reiner, and Hugh Thomas and for careful comments from the anonymous referees that helped to improve exposition.

OP is grateful to Kevin Dilks and Jessica Striker for their collaboration on~\cite{DPS}, without which the current paper would not exist, and to Francois Ziegler for help understanding the Freudenthal variety.

\bibliographystyle{amsalpha}
\bibliography{IncreasingGenerality}

\end{document}